\documentclass[10pt,reqno]{amsart}

\usepackage{amsmath,amsfonts,amsbsy,amsgen,amscd,mathrsfs,amssymb,amsthm}
\usepackage{enumerate}

\usepackage[usenames,dvipsnames]{xcolor}
\usepackage[colorlinks=true,citecolor=blue,linkcolor=blue]{hyperref}

\newtheorem{thm}{Theorem}[section]
\newtheorem{lem}[thm]{Lemma}

\newtheorem{prop}[thm]{Proposition}
\newtheorem{cor}[thm]{Corollary}

\theoremstyle{definition}

\newtheorem{rem}[thm]{Remark}

\numberwithin{equation}{section} 
\numberwithin{figure}{section}
\numberwithin{table}{section}

\newcommand{\Vol}{\mathrm{Vol}}
\newcommand{\cl}{\mathop{\mathrm{cl}}}
\newcommand{\eqa}{\mathop{\stackrel{\mathrm{a.e.}}{=}}}
\newcommand{\lea}{\mathop{\stackrel{\mathrm{a.e.}}{\le}}}
\newcommand{\gea}{\mathop{\stackrel{\mathrm{a.e.}}{\ge}}}
\newcommand{\supp}{\mathop{\mathrm{supp}}}

\begin{document}

\title{The equality cases of the Ehrhard-Borell inequality}

\author{Yair Shenfeld}
\address{Sherrerd Hall 323, Princeton University, Princeton, NJ
08544, USA}
\email{yairs@princeton.edu}

\author{Ramon van Handel}
\address{Fine Hall 207, Princeton University, Princeton, NJ 
08544, USA}
\email{rvan@princeton.edu}

\begin{abstract}
The Ehrhard-Borell inequality is a far-reaching refinement of
the classical Brunn-Minkowski inequality that captures the sharp convexity 
and isoperimetric properties of Gaussian measures. Unlike in the 
classical Brunn-Minkowski theory, the equality cases in this
inequality are far from evident from the known proofs. The equality 
cases are settled systematically in this paper. An essential ingredient
of the proofs are the geometric and probabilistic properties of certain 
degenerate parabolic equations. The method developed here serves as a 
model for the investigation of equality cases in a broader class of 
geometric inequalities that are obtained by means of a maximum principle.
\end{abstract}

\subjclass[2000]{60G15, 39B62, 52A40, 35K65, 35B50}

\keywords{Ehrhard-Borell inequality; equality cases; Gaussian measures; 
Gaussian Brunn-Minkowski inequalities; degenerate parabolic equations}

\maketitle

\thispagestyle{empty}

\section{Introduction}
\label{sec:intro}

The Brunn-Minkowski inequality plays a central role in numerous 
problems in different areas of mathematics \cite{Gar02,Bar06,Sch93}.
In its simplest form, it states that
$$
	\Vol(\lambda A+\mu B)^{1/n} \ge 
	\lambda\Vol(A)^{1/n} + \mu\Vol(B)^{1/n}
$$
for nonempty closed sets $A,B\subseteq\mathbb{R}^n$ 
and $\lambda,\mu>0$. In the nontrivial case $0<\Vol(A),\Vol(B)<\infty$,
equality holds precisely when $A,B$ are homothetic and convex.

From the very beginning, the study of the cases of equality in the 
Brunn-Minkowski inequality has formed an integral part of the theory. 
Minkowski, who proved the inequality for convex sets, established the 
equality cases in this setting by a careful analysis of the proof 
\cite[p.\ 247, (6)]{Min10}. Both the inequality and equality cases were 
later extended to measurable sets \cite{Lus35,HM53}; the equality 
cases are first shown to reduce to the convex case, for which 
Minkowski's result can be invoked. The understanding of equality cases 
plays an important role in its own right. It provides valuable insight 
into the Brunn-Minkowski theory and into closely related inequalities, 
such as the Riesz-Sobolev inequality \cite{Bur94}. It also guarantees 
uniqueness of solutions to variational problems that arise in 
geometry and mathematical physics (including the classical isoperimetric 
problem), e.g., \cite[\S 6]{Gar02} or \cite[\S 4.1]{Col05}.

This paper is concerned with analogues of the Brunn-Minkowski inequality 
for the standard Gaussian measure $\gamma_n$ on $\mathbb{R}^n$, which 
play an important role in probability theory. The Brunn-Minkowski 
inequality is rather special to the Lebesgue measure: while a weak form 
of the inequality holds for any log-concave measure \cite{Bor74}, this 
property is not sufficiently strong to explain, for example, the 
isoperimetric properties of such measures. It is therefore remarkable
that a sharp analogue of the Brunn-Minkowski inequality proves to exist
for Gaussian measures. A simple form of this inequality is as follows;
in the sequel, we denote $\Phi(x):=\gamma_1((-\infty,x])$.

\begin{thm}[Ehrhard, Borell]
\label{thm:simpleeb}
For closed sets $A,B\subseteq\mathbb{R}^n$ and any $\lambda,\mu>0$
such that $\lambda+\mu\ge 1$ and $|\lambda-\mu|\le 1$, we have
$$
	\Phi^{-1}(\gamma_n(\lambda A+\mu B)) \ge
	\lambda \Phi^{-1}(\gamma_n(A)) +
	\mu \Phi^{-1}(\gamma_n(B)).
$$
If $A,B$ are also convex, the conclusion
remains valid assuming only $\lambda+\mu\ge 1$.
\end{thm}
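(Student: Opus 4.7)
The plan is to follow the PDE (semigroup) method originally due to Borell. First, the set inequality is equivalent to the following functional form: if $f_0, f_1, f : \mathbb{R}^n \to [0,1]$ are measurable and satisfy
$$
  f(\lambda x_0 + \mu x_1) \ge \Phi\bigl(\lambda \Phi^{-1}(f_0(x_0)) + \mu \Phi^{-1}(f_1(x_1))\bigr)
  \quad \text{for all } x_0, x_1 \in \mathbb{R}^n,
$$
then
$$
  \Phi^{-1}\Bigl(\int f\,d\gamma_n\Bigr) \;\ge\; \lambda\, \Phi^{-1}\Bigl(\int f_0\,d\gamma_n\Bigr) + \mu\, \Phi^{-1}\Bigl(\int f_1\,d\gamma_n\Bigr).
$$
The theorem is the special case $f_0 = \mathbf{1}_A$, $f_1 = \mathbf{1}_B$, $f = \mathbf{1}_{\lambda A + \mu B}$, after a mild regularization accommodating $\Phi^{-1}(\{0,1\}) = \{\pm\infty\}$.

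To establish the functional form, introduce the Ornstein--Uhlenbeck semigroup $P_t$ with generator $L = \Delta - x\cdot\nabla$ and stationary law $\gamma_n$, and consider
$$
  H(t, x_0, x_1) := \Phi^{-1}\bigl(P_t f(\lambda x_0 + \mu x_1)\bigr) - \lambda\, \Phi^{-1}\bigl(P_t f_0(x_0)\bigr) - \mu\, \Phi^{-1}\bigl(P_t f_1(x_1)\bigr).
$$
The hypothesis gives $H(0,\cdot,\cdot) \ge 0$; since $P_t g \to \int g\,d\gamma_n$ as $t \to \infty$, $H(t, 0, 0)$ tends to the integral difference we seek. It thus suffices to propagate $H \ge 0$ in time. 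Using the ODE $\Phi''(x) = -x\,\Phi'(x)$, a short calculation shows that $F(t,x) := \Phi^{-1}(P_t g(x))$ satisfies the degenerate nonlinear parabolic equation
$$
  \partial_t F \;=\; L F - F\,|\nabla F|^2.
$$

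To propagate the inequality, one would attempt a first-touching analysis at a hypothetical time $t_\star$ when $H$ first vanishes at $(x_0^\star, x_1^\star)$: there $\partial_t H \le 0$, $\nabla_{(x_0,x_1)} H = 0$, and $\mathrm{Hess}_{(x_0,x_1)} H \succeq 0$. Crucially, $H = 0$ and $\nabla_{(x_0,x_1)} H = 0$ force $F = \lambda F_0 + \mu F_1$ together with the coincidence of the three gradients $\nabla F(\lambda x_0^\star + \mu x_1^\star)$, $\nabla F_0(x_0^\star)$, $\nabla F_1(x_1^\star)$, so when the PDE is substituted the nonlinear $F|\nabla F|^2$ terms cancel identically, leaving a linear combination of the Hessians of $F, F_0, F_1$ whose sign must be controlled. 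A careful algebraic analysis of the block-PSD condition on $\mathrm{Hess}_{(x_0,x_1)} H$ reveals that the desired sign holds precisely under the hypotheses $\lambda + \mu \ge 1$ and $|\lambda - \mu| \le 1$: the first controls how the Hessians at the base points trade off against the Hessian at $\lambda x_0^\star + \mu x_1^\star$, and the second controls a cross-term arising from the block structure. In the convex case, concavity of $\Phi^{-1}(P_t \mathbf{1}_A)$ (inherited from the convexity of $A$) supplies an additional Hessian inequality that renders the $|\lambda - \mu| \le 1$ condition unnecessary.

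The main obstacle is the genuine degeneracy of the PDE for $F$: as $P_t f_i$ approaches $0$ or $1$ the term $F|\nabla F|^2$ blows up (since $\Phi^{-1}$ diverges), so the classical parabolic maximum principle does not apply to $H$ directly. I would handle this by regularization: replace each $f_i$ by a smooth $f_i^{(\varepsilon)}$ taking values in $[\varepsilon, 1-\varepsilon]$ and satisfying a strict version of the pointwise hypothesis; on the regularized problem the nonlinearity is locally bounded and a clean maximum principle---supplemented by Gaussian-weighted growth estimates to control $H^{(\varepsilon)}$ on the unbounded domain $\mathbb{R}^{2n}$---delivers $H^{(\varepsilon)} \ge 0$. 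Passing to the limit $\varepsilon \to 0$ uses dominated convergence together with the closedness of $A$, $B$, and $\lambda A + \mu B$. Designing a regularization that strictly preserves the pointwise hypothesis while converging back to the original indicator setup is the technical crux of the argument.
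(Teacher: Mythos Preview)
Your proposal follows the same semigroup strategy that the paper sketches (this theorem is cited rather than proved in full, but Section~3 outlines Borell's argument), and the overall plan is sound. There are, however, two noteworthy differences worth pointing out.

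First, the paper uses the heat semigroup $Q_tf(x)=\int f(x+\sqrt{t}z)\,\gamma_n(dz)$ rather than the Ornstein--Uhlenbeck semigroup. This is cosmetic: the two are related by a time-change and scaling, and either works.

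Second, and more substantively, the paper does \emph{not} work with the nonlinear scalar equation $\partial_t F = LF - F|\nabla F|^2$ for each piece separately and then perform Hessian algebra at a first-touching point. Instead, it computes $\partial_t C$ for the combined quantity $C(t,x,y)=u_h(t,\lambda x+\mu y)-\lambda u_f(t,x)-\mu u_g(t,y)$ directly and observes (Lemma~3.1) that $C$ satisfies a \emph{linear} parabolic equation
\[
\partial_t C = \tfrac12\Delta_\rho C + \langle b,\nabla C\rangle - \tfrac12\|\nabla u_h\|^2\,C,
\]
where $\rho=(1-\lambda^2-\mu^2)/2\lambda\mu$. The conditions $\lambda+\mu\ge 1$ and $|\lambda-\mu|\le 1$ then appear transparently as the condition $|\rho|\le 1$, i.e.\ ellipticity of $\Delta_\rho$, after which the weak maximum principle (or equivalently the Feynman--Kac representation in Lemma~3.3) yields $C\ge 0$ immediately. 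Your first-touching argument would ultimately recover the same algebra, but the linear structure is hidden; once you see that the nonlinear terms cancel into a linear equation for $C$, the Hessian analysis you describe becomes unnecessary. This also sidesteps the issue of whether the minimum of $H$ is actually attained on $\mathbb{R}^{2n}$, since Feynman--Kac gives the representation $C(1,0,0)=\mathbf{E}[C(0,X_1,Y_1)\cdot(\text{positive weight})]$ directly.

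Your treatment of the regularization and of the convex case is reasonable, though note that the concavity of $\Phi^{-1}(P_t\mathbf{1}_A)$ for convex $A$ is itself essentially the $\lambda+\mu=1$ case of the inequality, so invoking it to relax the $|\lambda-\mu|\le 1$ condition is a mild bootstrap rather than an independent input.
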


The Ehrhard-Borell inequality lies at the top of a large hierarchy of 
Gaussian inequalities. It implies the Gaussian isoperimetric inequality, 
which states that half-spaces minimize Gaussian surface area among 
all sets of the same measure; the Gaussian isoperimetric inequality 
in turn implies numerous geometric and analytic inequalities for 
Gaussian measures \cite{Led96,Lat02}. It has recently been understood that 
Theorem \ref{thm:simpleeb} gives rise to new concentration phenomena for 
convex functions that go beyond the isoperimetric theory, see 
\cite{PV17,Val17} and unpublished results of the second author recorded 
in \cite{GMRS17}. Further applications can be found in 
\cite{Ehr83,Bor96,BS97,Ban98,LO99,Hor05,KM17}.

Ehrhard originally discovered the special case of Theorem 
\ref{thm:simpleeb} where $\lambda=1-\mu$ and where $A$ and $B$ are convex 
\cite{Ehr83}. Ehrhard's proof, using a Gaussian analogue of Steiner 
symmetrization, relies heavily on convexity (but see \cite{Lat96}), and 
the question whether the inequality holds for arbitrary sets---as might be 
expected by analogy with the classical Brunn-Minkowski theory---remained 
open for a long time. This question was finally settled by Borell 
\cite{Bor03,Bor08}, who invented an entirely new method of proof to 
establish Theorem \ref{thm:simpleeb} for general sets $A,B$ and to 
identify all values of $\lambda,\mu>0$ for which the inequality is valid. 
It is surprising that the latter is nontrivial and depends on whether or 
not the sets $A,B$ are convex, a feature not present in the classical 
Brunn-Minkowski inequality for which the admissible range of $\lambda,\mu$ 
is trivial by homogeneity of the Lebesgue measure. The fact that 
Gaussian measure lacks the homogeneity and translation-invariance 
properties of Lebesgue measure apparently makes the Gaussian theory much 
more subtle than its classical counterpart. Consequently, even a question 
as basic as the equality cases of Theorem \ref{thm:simpleeb} is far from 
evident from the known proofs and has remained open. This question will be 
systematically settled in this paper. For example, we will prove the 
following.

\begin{thm}
\label{thm:simpleeq}
Let $A,B\subseteq\mathbb{R}^n$ be closed sets with 
$0<\gamma_n(A),\gamma_n(B)<1$
and let $\lambda\ge\mu>0$ satisfy 
$\lambda+\mu\ge 1$ and $\lambda-\mu\le 1$. Then equality in
Theorem \ref{thm:simpleeb}
$$
        \Phi^{-1}(\gamma_n(\lambda A+\mu B)) =
        \lambda \Phi^{-1}(\gamma_n(A)) +
        \mu \Phi^{-1}(\gamma_n(B))
$$
holds if and only if the following hold.
\begin{enumerate}[$\bullet$]
\itemsep\abovedisplayskip
\item If $\lambda\ne 1-\mu$ and $\lambda\ne 1+\mu$, then $A$ and $B$ must
be parallel half-spaces:
\begin{equation}
	\label{eq:H}
	\tag{H}
	A=\{x\in\mathbb{R}^n:\langle a,x\rangle + b \ge 0\},\qquad
	B=\{x\in\mathbb{R}^n:\langle a,x\rangle + c \ge 0\}
\end{equation}
for some $a\in\mathbb{R}^n$ and $b,c\in\mathbb{R}$.
\item If $\lambda = 1-\mu$, then \textbf{either} \eqref{eq:H} holds,
\textbf{or} $A,B$ are convex and $A=B$.
\item If $\lambda = 1+\mu$, then \textbf{either} 
\eqref{eq:H} holds, \textbf{or} $B$ is convex and $-A=\cl B^c$.
\end{enumerate}
\vskip.1cm
If $A,B$ are convex, the conclusion remains valid if the
assumption $\lambda-\mu\le 1$ is omitted; in particular, 
for any $\lambda+\mu>1$, equality holds if and only if \eqref{eq:H} holds.
\end{thm}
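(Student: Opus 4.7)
The plan is to lift Ehrhard-Borell to a pointwise parabolic estimate and then extract the equality cases from a strong maximum principle. Using $\gamma_n(A)=P_1\mathbf{1}_A(0)$ for the heat semigroup $P_tf(x)=\E[f(x+\sqrt{t}\,G)]$, Theorem~\ref{thm:simpleeb} is the specialization at $t=1$, $x=y=0$ of the pointwise inequality
$$
U_{\lambda A+\mu B}(t,\lambda x+\mu y)\ \ge\ \lambda\,U_A(t,x)+\mu\,U_B(t,y),
$$
where $U_C(t,x):=\Phi^{-1}(P_t\mathbf{1}_C)(x)$. After a short initial smoothing, $U_C$ is smooth on the interior $\{0<P_t\mathbf{1}_C<1\}$ and satisfies the nonlinear parabolic equation $\partial_tU=\tfrac12\Delta U-\tfrac12\,U|\nabla U|^2$. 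Following Borell, the deficit $F$ of the displayed inequality satisfies a parabolic differential inequality whose lower-order coefficient has the correct sign precisely when $\lambda+\mu\ge 1$ and $|\lambda-\mu|\le 1$; the minimum principle then yields $F\ge 0$, and equality in Theorem~\ref{thm:simpleeb} forces $F(1,0,0)=0$.

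The second step is to invoke the \emph{strong} maximum principle. In the strict regime $\lambda\ne 1\pm\mu$ the operator acting on $F$ is uniformly parabolic in $(x,y)$ on compacta of the interior, so vanishing at one spacetime point propagates to an open parabolic neighborhood. Differentiating the resulting identity separately in $x$ and $y$ forces each of $U_A(s,\cdot)$ and $U_B(s,\cdot)$ to be affine with parallel gradients on this neighborhood, and analyticity of heat-equation solutions for $s>0$ extends the affinity to all of $\mathbb{R}^n$. A Gaussian rigidity lemma---based on the explicit formula $P_s\mathbf{1}_{\{\langle a,\cdot\rangle+c\ge 0\}}(x)=\Phi((\langle a,x\rangle+c)/\sqrt{s\,|a|^2})$ and the uniqueness of the heat flow---then upgrades this to $C$ being a half-space orthogonal to the common gradient direction, producing \eqref{eq:H}.

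The third step handles the borderline cases $\lambda=1\mp\mu$, where the zero-order coefficient of $F$'s PDE degenerates and extra equality cases emerge. For $\lambda=1-\mu$, the EB inequality becomes the statement that $\Phi^{-1}(\gamma_n(\cdot))$ is concave along Minkowski interpolation, and saturation of the strong maximum principle along the now-flat direction forces either \eqref{eq:H} or the alternative $A=B$ with $A$ convex. For $\lambda=1+\mu$, an analogous duality-twisted analysis---exploiting $\Phi^{-1}(1-u)=-\Phi^{-1}(u)$ to relate $B$ to $\cl B^c$---produces the alternative $-A=\cl B^c$ with $B$ convex. The convex case, in which the restriction $|\lambda-\mu|\le 1$ can be dropped, is subsumed since the PDE argument extends to the wider parameter range without hitting the borderline degeneracies, leaving only \eqref{eq:H}. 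The easy direction---that each of the listed configurations saturates the inequality---is a direct one-dimensional reduction to equality in Gaussian isoperimetry.

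The main obstacle lies in the interplay between the strong maximum principle, which requires a smooth uniformly parabolic setting, and the singular nature of the equality cases---half-spaces, $A=B$ convex, or $-A=\cl B^c$---which live precisely on the boundary $P_t\mathbf{1}_C\in\{0,1\}$ where the coefficient $U|\nabla U|^2$ blows up. Carefully quantifying this boundary behavior to propagate the rigidity from the smooth interior back to the singular initial data is the heart of the proof; the sharp change in the equality family at $\lambda\pm\mu=1$ reflects the vanishing of a principal direction of the parabolic operator and must be captured by a refined degenerate-parabolic analysis tailored to each borderline case.
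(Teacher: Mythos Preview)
Your overall architecture matches the paper's: work with Borell's deficit $C(t,x,y)=u_h(t,\lambda x+\mu y)-\lambda u_f(t,x)-\mu u_g(t,y)$, use the strong maximum principle in the strictly parabolic regime to force $C\equiv 0$ and hence affinity of $u_f,u_g,u_h$, then invert the heat semigroup. One correction: the parameter constraint $\lambda+\mu\ge 1$, $|\lambda-\mu|\le 1$ does not come from a sign condition on a lower-order coefficient. In the paper's Lemma~\ref{lem:para} the zero-order term $-\tfrac12\|\nabla u_h\|^2 C$ always has the right sign; what depends on $(\lambda,\mu)$ is whether the second-order operator $\Delta_\rho$, with $\rho=(1-\lambda^2-\mu^2)/2\lambda\mu$, is elliptic. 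The borderline cases are exactly $\rho=\pm 1$, where $\Delta_\rho$ loses a direction. (Likewise, your remark that in the convex case ``the PDE argument extends to the wider parameter range without hitting the borderline degeneracies'' is not right: for $\lambda-\mu>1$ one has $\rho<-1$ and the operator is not parabolic at all; the paper handles this by a separate reduction argument.)

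The substantive gap is the degenerate case, which is where essentially all the work lies. Writing that ``saturation of the strong maximum principle along the now-flat direction forces either \eqref{eq:H} or $A=B$ convex'' is not a mechanism. When $\rho=1$ the diffusion $(X_t,Y_t)$ is driven by a \emph{single} Brownian motion, so the Feynman--Kac representation only gives $C(1-t,\cdot,\cdot)=0$ on the support $\supp\mu_t$ of the law of $(X_t,Y_t)$, which may be a one-dimensional set in $\mathbb{R}^2$; the strong maximum principle simply fails there, and nothing in your outline explains how the two equality families get separated. The paper's key idea is a control-theoretic dichotomy: by the Stroock--Varadhan support theorem, $\supp\mu_t$ contains an open set iff the Lie algebra generated by the drift and diffusion vector fields has full rank at some point of the support (Proposition~\ref{prop:lie}). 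If it does, analyticity of $C(1-t,\cdot,\cdot)$ propagates $C=0$ everywhere and one recovers \eqref{eq:H}. If it does not, the failure of the Lie bracket condition is itself an \emph{algebraic identity} on the drift $b$, which combined with the first- and second-order optimality conditions for $C$ at its minimizers yields a further dichotomy (Lemma~\ref{lem:degdegd}): either $u_f(1-t,x)=u_g(1-t,y)$ on the support, which forces $X_t=Y_t$ and hence $f\eqa g\eqa h$ with $\Phi^{-1}(f)$ concave; or $u_f'',u_g'',u_h''$ vanish along an accumulating sequence, which by real-analyticity forces global affinity and hence \eqref{eq:H} again. This Lie-algebraic step, and the ensuing case split, is the heart of the proof and is absent from your plan.
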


It is not hard to verify that each case described in Theorem 
\ref{thm:simpleeq} yields equality in Theorem \ref{thm:simpleeb}. What 
is far from obvious is that these turn out to be the \emph{only} 
equality cases. Theorem \ref{thm:simpleeq} is in fact only a special 
case of our main result, which characterizes all equality cases of the 
functional (Pr\'ekopa-Leindler) form of the Ehrhard-Borell inequality 
for arbitrarily many sets or functions, and which deals with the more 
general measurable situation. The complete characterization is somewhat
more involved and we postpone a full statement to section \ref{sec:main}.

In the last paper written before his death \cite{Ehr86}, Ehrhard 
established the special case of Theorem \ref{thm:simpleeq} where $A,B$ 
are convex and $\lambda=1-\mu$. However, as was the case for the proof 
of the inequality, the general setting of interest in this paper appears 
to be out of reach of Ehrhard's methods. Our starting point will instead 
arise from the ideas introduced by Borell \cite{Bor03}, which we 
presently outline very briefly and which will be recalled in more detail 
in section \ref{sec:borell}. (Several new proofs of the Ehrhard-Borell 
inequality were recently discovered \cite{vH17,NP16,Iv16}, but none of 
these appear to be appropriate for the investigation of equality cases.)

Let us first recall the functional form of Theorem \ref{thm:simpleeb}:
if functions $f,g,h$ satisfy
$$
	\Phi^{-1}(h(\lambda x+\mu y)) \ge
	\lambda\Phi^{-1}(f(x)) + \mu\Phi^{-1}(g(y)),
$$
then 
$$
	\Phi^{-1}\bigg(\int h\, d\gamma_n\bigg) \ge
	\lambda\Phi^{-1}\bigg(\int f\, d\gamma_n\bigg) +
	\mu\Phi^{-1}\bigg(\int g\, d\gamma_n\bigg).
$$
The statement of Theorem \ref{thm:simpleeb} is recovered by setting
$f=1_A$, $g=1_B$, $h=1_{\lambda A+\mu B}$. To prove this functional
inequality, Borell introduces the function
$$
	C(t,x,y) :=
	\Phi^{-1}(Q_th(\lambda x+\mu y)) -
	\lambda\Phi^{-1}(Q_tf(x)) - \mu\Phi^{-1}(Q_tg(y)),
$$
where $Q_t$ denotes the heat semigroup
$$
	Q_tf(x) := \int f(x+\sqrt{t}z)\,\gamma_n(dz).
$$
The assumption on $f,g,h$ can now be written as $C(0,x,y)\ge 0$, while 
the Ehrhard-Borell inequality to be proved can be written as 
$C(1,0,0)\ge 0$. The remarkable observation of Borell is that the 
function $C(t,x,y)$ is the solution of a certain parabolic equation in 
the domain $[0,\infty)\times\mathbb{R}^n\times\mathbb{R}^n$. One can 
therefore invoke the weak parabolic maximum principle \cite{Ev97}, which
states that the minimum of the function $C$ must be attained at the
boundary of its domain. Therefore, $\min C = \min C(0,\cdot,\cdot)\ge 0$
by assumption, and the desired inequality follows.

Borell's approach to the inequality immediately suggests a promising 
approach to the equality cases: one could attempt to invoke the strong 
parabolic maximum principle, which states that the minimum of the 
function $C$ can be attained \emph{only} at the boundary unless the 
function $C$ is constant \cite{Ev97}. We could then reason that if we
have equality $C(1,0,0)=0$ in the Ehrhard-Borell inequality, then $C$
attains its minimum at a non-boundary point and therefore
$C(0,\cdot,\cdot)\equiv 0$, that is,
$$
	\Phi^{-1}(h(\lambda x+\mu y)) =
	\lambda\Phi^{-1}(f(x)) + \mu\Phi^{-1}(g(y)).
$$
For smooth $f,g,h$, this will imply that $\Phi^{-1}(f)$, $\Phi^{-1}(g)$, 
$\Phi^{-1}(h)$ are linear functions
$$
	f(x) = \Phi(\langle a,x\rangle +b),\quad
	g(x) = \Phi(\langle a,x\rangle +c),\quad
	h(x) = \Phi(\langle a,x\rangle +\lambda b+\mu c).
$$
This equality case corresponds exactly to case \eqref{eq:H} in
Theorem \ref{thm:simpleeq}, which is obtained as the limiting case of
these $f,g,h$ by letting $a,b,c\to\infty$ proportionally.

Unfortunately, the reasoning just given must clearly be flawed: 
it suggests that \eqref{eq:H} is the only equality case of the 
Ehrhard-Borell inequality, while we know from Theorem \ref{thm:simpleeq} 
that this is not the case. The error in the above reasoning lies in the 
validity of the strong maximum principle. While the weak maximum 
principle holds for any parabolic equation under minimal regularity 
assumptions, the strong maximum principle holds only when the parabolic 
equation is nondegenerate. An inspection of the equation satisfied by 
Borell's function $C$ will show that the equation is nondegenerate only 
when $\lambda\ne 1-\mu$ and $\lambda\ne 1+\mu$. In the degenerate cases 
$\lambda=1-\mu$ and $\lambda=1+\mu$, the strong maximum principle is no
longer valid, and indeed in each of these cases we see additional equality
cases appearing.

The core of this paper is devoted to showing how each of the equality 
cases of the Ehrhard-Borell inequality can be obtained from the 
analysis of the underlying degenerate parabolic equations, in which
we make extensive use of probabilistic methods.
Our method is 
based on the following fact, which follows by combining the 
Stroock-Varadhan support theorem \cite{SV72} with certain geometric 
arguments \cite{NvdS16}:
a probabilistic form of
the strong maximum principle holds locally near 
a point in the domain if a certain Lie algebra generated by the 
underlying vector fields spans the tangent space (this is closely 
related to H\"ormander's hypoellipticity condition). If we can apply the 
strong maximum principle in any local neighborhood in the domain, then 
analyticity of the heat semigroup will allow us to conclude that the 
solution is constant everywhere and thus the equality case \eqref{eq:H} 
will follow as already indicated above. Using this idea, we arrive at 
the following dichotomy: either the Lie algebra rank condition is 
satisfied at some point in the domain, in which case the equality case 
\eqref{eq:H} follows; or the Lie algebra rank condition fails 
everywhere, which imposes strong constraints on the underlying vector 
fields. A careful analysis of the latter will give rise to the remaining 
equality cases.

There are a number of technical issues that arise when implementing the 
above program that will be addressed in the proof. The analysis of the 
underlying Lie algebra is significantly simplified by considering the 
Ehrhard-Borell inequality on $\mathbb{R}$. We will therefore implement 
most of the ideas outlined above in the one-dimensional case, and then 
extend the conclusion to arbitrary dimension by an induction argument. 
Another issue that will arise is that the function $C(t,x,y)$ may be 
singular at $t=0$ (for example, when $f,g,h$ are indicators), so that we 
cannot reason directly about its behavior on the boundary. We will 
therefore perform our analysis only at times $t>0$ where the heat 
semigroup smooths the functions $f,g,h$, and then use an inversion 
argument to deduce the general equality cases. It is interesting to note 
that this approach is completely different than the analysis of equality 
cases in the classical Pr\'ekopa-Leindler inequality \cite{Dub77}, where 
the functional equality cases are reduced to the set equality cases in 
the Brunn-Minkowski inequality.

As in the classical Brunn-Minkowski theory, the study of equality cases in 
the Ehrhard-Borell inequality provides us with fundamental insight into 
the structure of Gaussian inequalities. At the same time, the maximum 
principle method of Borell has turned out to be a powerful device that 
makes it possible to prove many other geometric inequalities 
\cite{BH09,IV15}. The issues encountered in proving the equality cases of 
the Ehrhard-Borell inequality are characteristic of this method, and the 
techniques developed in this paper serve as a model for the investigation 
of equality cases in situations that may be inaccessible by other methods. 
For example, a natural avenue for further investigation (which we will not 
pursue here) would be to exploit the methods of this paper to investigate 
the equality cases of various nontrivial extensions of Ehrhard's 
inequality developed in \cite{BH09,IV15,Iv16,vH17}.

Once the equality cases have been settled, another natural open problem is 
to obtain quantitative stability estimates (see, e.g., \cite{Fig14}). In 
particular, as is often the case in Gauss space, the Ehrhard-Borell 
inequality is infinite-dimensional in nature \cite{Bor03} and thus 
dimension-free estimates are of particular interest. The methods developed 
in this paper are local in nature, and are therefore unlikely to give 
access to quantitative information. The challenging stability problem for 
the Ehrhard-Borell inequality appears to remain out of reach of any method 
developed to date.

The remainder of this paper is organized as follows. Section 
\ref{sec:main} is devoted to the full statement of our main results, as 
well as some preliminary analysis and notation. In section 
\ref{sec:borell}, we recall Borell's proof of the Ehrhard-Borell 
inequality and outline the basic idea behind the proof of the equality 
cases. Section \ref{sec:nondeg} proves our main result in the 
nondegenerate cases $\lambda\ne 1-\mu$ and $\lambda\ne 1+\mu$. Section 
\ref{sec:basedeg} is devoted to the proof of equality cases in the most 
basic degenerate situation: $\lambda=1-\mu$ in one dimension. This 
section forms the core of the paper. Section \ref{sec:tech} extends the 
basic degenerate case to arbitrary dimension by an induction argument. 
Finally, section \ref{sec:gen} extends the basic cases considered in the 
previous sections to the most general situation covered by our main 
result, completing the proof.

\section{Statement of main results}
\label{sec:main}

Throughout this paper, we will use the following notation. 
We will write 
$\mathbb{\bar R}
:=\mathbb{R}\cup\{-\infty,\infty\}$, where we always use the convention
$\infty-\infty = -\infty+\infty=-\infty$. We denote by 
$\langle\cdot,\cdot\rangle$ and $\|\cdot\|$, respectively, the standard 
Euclidean inner product and norm on $\mathbb{R}^n$. The standard 
Gaussian measure on $\mathbb{R}^n$ will be denoted $\gamma_n$, that is,
$$
	\gamma_n(dx) := \frac{e^{-\|x\|^2/2}}{(2\pi)^{n/2}}\,dx.
$$
We define the standard Gaussian distribution function
$\Phi:\mathbb{\bar R}\to[0,1]$ as
$$
	\Phi(x) := \gamma_1((-\infty,x]) =
	\int_{-\infty}^x \frac{e^{-y^2/2}}{\sqrt{2\pi}}\,dy.
$$
For functions $f,g:\mathbb{R}^n\to\mathbb{\bar R}$, we write $f(x)\eqa g(x)$, 
$f(x)\lea g(x)$, or $f(x)\gea g(x)$ if $f(x)=g(x)$, $f(x)\le g(x)$, 
$f(x)\ge g(x)$, respectively, for almost every $x$ with respect to the 
Lebesgue measure on $\mathbb{R}^n$. A function $f:\mathbb{R}^n\to[0,1]$ 
is said to be \emph{trivial} if $f(x)\eqa 0$ or $f(x)\eqa 1$, and is 
called \emph{nontrivial} otherwise. A function 
$f:\mathbb{R}^n\to\mathbb{\bar R}$ is said to be \emph{a.e.\ concave} if 
$f(x)\eqa\tilde f(x)$ for some concave function $\tilde f:\mathbb{R}^n\to
\mathbb{\bar R}$.

The aim of this section is to give a complete statement of our 
characterization of the equality cases in the Ehrhard-Borell
inequality. Before we do so, we first state the Ehrhard-Borell
inequality in its most general form.

\begin{thm}[Ehrhard, Borell]
\label{thm:be}
Let $\lambda_1,\ldots,\lambda_m>0$ satisfy
\begin{equation}
\label{eq:A}
\tag{A}
	\sum_{i\le m}\lambda_i \ge 1,\qquad\quad
	2\max_{i\le m}\lambda_i \le 1+\sum_{i\le m}\lambda_i.
\end{equation}
Then for any measurable functions
$h,f_1,\ldots,f_m:\mathbb{R}^n\to[0,1]$ such that
\begin{equation}
\label{eq:B}
\tag{B}
	\Phi^{-1}\Bigg(h\Bigg(\sum_{i\le m}\lambda_ix_i\Bigg)\Bigg)
	\gea \sum_{i\le m}\lambda_i\Phi^{-1}(f_i(x_i)),
\end{equation}
we have
$$	\Phi^{-1}\bigg(\int h\,d\gamma_n\bigg)\ge
	\sum_{i\le m}\lambda_i\Phi^{-1}\bigg(
	\int f_i\,d\gamma_n
	\bigg).
$$
If the functions $\Phi^{-1}(h),\Phi^{-1}(f_1),\ldots,\Phi^{-1}(f_m)$
are also a.e.\ concave, the conclusion remains valid when the second
inequality in \eqref{eq:A} is omitted.
\end{thm}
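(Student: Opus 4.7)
The plan is to follow Borell's semigroup-and-maximum-principle approach, sketched in the introduction for $m=2$, adapted to $m$ arbitrary functions. For $t>0$, define
$$
C(t,x_1,\ldots,x_m) := \Phi^{-1}\!\left(Q_t h\!\left(\sum_{i\le m}\lambda_i x_i\right)\right)
- \sum_{i\le m}\lambda_i\,\Phi^{-1}(Q_t f_i(x_i)),
$$
where $Q_t$ is the heat semigroup of the introduction. After setting aside trivial cases (where some $f_i$ or $h$ is a.e.\ in $\{0,1\}$, in which case both sides of the theorem reduce compatibly under the convention $\infty-\infty=-\infty$), the averages $Q_th$ and $Q_tf_i$ take values strictly in $(0,1)$ and are smooth on $(0,\infty)\times\mathbb{R}^n$, so $C$ is smooth on $(0,\infty)\times\mathbb{R}^{mn}$. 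Since $Q_1\varphi(0)=\int\varphi\,d\gamma_n$, the desired inequality is exactly $C(1,0,\ldots,0)\ge 0$.

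The core computation uses the identity
$$
\partial_t\Phi^{-1}(Q_t\varphi) = \tfrac{1}{2}\Delta\Phi^{-1}(Q_t\varphi) - \tfrac{1}{2}\Phi^{-1}(Q_t\varphi)\,\|\nabla\Phi^{-1}(Q_t\varphi)\|^2,
$$
which follows from $\partial_tQ_t\varphi=\tfrac{1}{2}\Delta Q_t\varphi$ by a chain-rule computation (using $\Phi'=\phi$ and $\phi'(v)=-v\phi(v)$). Applied to $u:=\Phi^{-1}(Q_th)$ and $v_i:=\Phi^{-1}(Q_tf_i)$, and regrouped via $\nabla_{x_i}C=\lambda_i(\nabla u-\nabla v_i)$ and $u-\sum_i\lambda_iv_i=C$, it produces an identity of the form
$$
\partial_t C = \tfrac{1}{2}\,\mathrm{tr}\!\bigl((M\otimes I_n)\,\nabla^2 C\bigr) + \langle b,\nabla C\rangle + Q(\nabla C) - \tfrac{1}{2}\,\|\nabla u\|^2\,C,
$$
where $M=(M_{ij})$ is a symmetric $m\times m$ matrix with $M_{ii}=1$ and off-diagonal entries chosen so that $\lambda^{\top}M\lambda=1$, $b$ depends on $\nabla u,\nabla v_i$, and $Q(\nabla C)$ is a quadratic form in $\nabla C$ (harmless at interior extrema of $C$). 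The key arithmetic fact is that condition \eqref{eq:A} is precisely equivalent to the existence of such a matrix with $M\succeq 0$: with $M_{ii}=1$ and $\lambda^{\top}M\lambda=1$ fixed, the remaining constraint $M\succeq 0$ translates exactly into the two halves of \eqref{eq:A} (for $m=2$, $M_{12}=(1-\lambda_1^2-\lambda_2^2)/(2\lambda_1\lambda_2)$ and $|M_{12}|\le 1$ recovers both parts of \eqref{eq:A}). When $\Phi^{-1}(Q_th)$ and $\Phi^{-1}(Q_tf_i)$ are a.e.\ concave, $\nabla^2 u\le 0$ and $\nabla^2 v_i\le 0$; it then suffices to find $M\succeq 0$ with $M_{ii}\le 1$ and $\lambda^{\top}M\lambda\ge 1$, for which $M=\mathbf{1}\mathbf{1}^{\top}/\sum_j\lambda_j$ works whenever $\sum_j\lambda_j\ge 1$. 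This is the mechanism by which the second half of \eqref{eq:A} is dropped in the concave case.

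With the parabolic inequality $\partial_tC\ge\mathcal{L}C$ in hand for the resulting (possibly degenerate) linear operator $\mathcal{L}$, I apply the weak parabolic maximum principle on $(\epsilon,1]\times\mathbb{R}^{mn}$. Spatial infinity is handled by a standard cutoff/quadratic-barrier argument using boundedness of $u,v_i$ (the semigroup averages are bounded away from $\{0,1\}$); the initial data at $t=\epsilon^+$ is controlled by \eqref{eq:B}, which combined with the smoothing $Q_s\varphi\to\varphi$ a.e. as $s\to 0^+$ and the continuity of $\Phi^{-1}$ forces $C\ge 0$ pointwise in the limit $\epsilon\to 0^+$. Evaluation at $t=1$, $x_i=0$ yields the theorem. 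The main obstacle is the matrix algebra---showing that \eqref{eq:A} is exactly the range in which a PSD $M$ with the required normalizations exists, and that a.e.\ concavity relaxes this constraint by precisely the right amount; the remaining ingredients (regularization, growth bounds, boundary limits) are technical but standard.
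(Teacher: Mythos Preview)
Your approach is Borell's own (the paper cites \cite{Bor08} rather than reproving the theorem; Section~\ref{sec:borell} sketches only the case $m=2$ via Feynman--Kac). Two points deserve comment.

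First, the quadratic term $Q(\nabla C)$ is an artifact of bookkeeping. If instead of expanding $|\nabla v_i|^2=|\nabla u-\tfrac{1}{\lambda_i}\nabla_{x_i}C|^2$ you use the identity
\[
u\,\|\nabla u\|^2-\sum_i\lambda_i v_i\,\|\nabla v_i\|^2
= C\,\|\nabla u\|^2+\sum_i v_i\,(\nabla u+\nabla v_i)\cdot\nabla_{x_i}C,
\]
the remainder is genuinely linear in $(C,\nabla C)$ and the equation becomes exactly $\partial_tC=\tfrac12\mathrm{tr}((M\otimes I)\nabla^2C)+\langle b,\nabla C\rangle-\tfrac12\|\nabla u\|^2C$ with no $Q$ term, as in Lemma~\ref{lem:para}. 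Your form still suffices for a bare weak-maximum-principle argument at interior extrema, but the linear version is what makes the Feynman--Kac representation of Lemma~\ref{lem:fk} available and is what Borell actually uses.

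Second, your concave-case argument has a real gap. You require $\nabla^2u\le 0$ and $\nabla^2v_i\le 0$ for $t>0$, i.e.\ concavity of $\Phi^{-1}(Q_th)$ and $\Phi^{-1}(Q_tf_i)$, while the hypothesis only gives concavity at $t=0$. Preservation of $\Phi^{-1}$-concavity under the heat flow is itself the case $\sum_i\lambda_i=1$, $f_1=\cdots=f_m=h$ of Theorem~\ref{thm:be}, so invoking it here is circular unless you have already established the general case under the full assumption \eqref{eq:A}. This is fixable---prove the general case first, deduce concavity preservation as a corollary, then run your relaxed-$M$ argument---but the dependency must be made explicit. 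The paper handles the convex extension differently (see the proof under convexity assumptions in Section~\ref{sec:gen}): once the general case is in hand, set $\Phi^{-1}(\tilde h(x)):=\Phi^{-1}(h(\lambda x))/\lambda$ with $\lambda=\sum_i\lambda_i>1$ and apply the already-proved inequality twice, once with coefficients $(\lambda/2,\lambda/2)$ and once with $(\lambda_i/\lambda)_i$, both of which satisfy \eqref{eq:A} in full. This two-step reduction avoids any appeal to heat-flow preservation of concavity.
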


Theorem \ref{thm:be} is given in \cite{Bor08}, where it is also shown 
that the assumption \eqref{eq:A} is necessary. The result stated here 
is actually slightly more general, as we have only assumed that the 
inequality in \eqref{eq:B} holds a.e., while it is assumed in 
\cite{Bor08} that the inequality holds everywhere. The elimination of 
the null set is routine and follows immediately by invoking the results 
of \cite{Dub77} or \cite[Appendix]{BL76}. The significance of this 
apparently minor generalization will be discussed in Remark \ref{rem:null}
below.

We are now ready to state the main result of this paper.

\begin{thm}[Main equality cases]
\label{thm:main}
Let $\lambda_1\ge \lambda_2\ldots,\lambda_m>0$ satisfy \eqref{eq:A},
that is,
$$
	\sum_{1\le i\le m}\lambda_i \ge 1,\qquad\quad
	\lambda_1-
	\sum_{2\le i\le m}\lambda_i \le 1.
$$
Let $h,f_1,\ldots,f_m:\mathbb{R}^n\to[0,1]$ be nontrivial measurable
functions satisfying \eqref{eq:B}. Then
equality in the Ehrhard-Borell inequality
$$
	\Phi^{-1}\bigg(\int h\,d\gamma_n\bigg)=
	\sum_{i\le m}\lambda_i\Phi^{-1}\bigg(
	\int f_i\,d\gamma_n
	\bigg)
$$
holds if and only if the following hold.
\begin{enumerate}[$\bullet$]
\itemsep\abovedisplayskip
\item If $\sum_i\lambda_i\ne 1$ and $\lambda_1-\sum_{i\ge 2}\lambda_i\ne 1$,
then \textbf{either}
\begin{equation}
\label{eq:H1}\tag{H1}
	h(x) \eqa \Phi(\langle a,x\rangle + b),\qquad
	f_i(x) \eqa \Phi(\langle a,x\rangle + b_i)
\end{equation}
for all $i\le m$, \textbf{or}
\begin{equation}
\label{eq:H2}\tag{H2}
	h(x) \eqa 1_{\langle a,x\rangle + b\ge 0},\qquad
	f_i(x) \eqa 1_{\langle a,x\rangle + b_i\ge 0}
\end{equation}
for all $i\le m$, for some $a\in\mathbb{R}^n$, $b_1,\ldots,b_m\in\mathbb{R}$,
and $b=\sum_i\lambda_ib_i$.
\item If $\sum_i\lambda_i=1$, then \textbf{either} \eqref{eq:H1} or
\eqref{eq:H2} hold, \textbf{or} 
$$
	h(x)\eqa f_1(x)\eqa\ldots\eqa f_m(x)
$$
and $\Phi^{-1}(h),\Phi^{-1}(f_1),\ldots,\Phi^{-1}(f_m)$ are a.e.\ concave.
\item If $\lambda_1-\sum_{i\ge 2}\lambda_i=1$, then \textbf{either}
\eqref{eq:H1} or \eqref{eq:H2} hold, \textbf{or}
$$
	1-h(-x)\eqa 1-f_1(-x)\eqa f_2(x)\eqa\ldots\eqa f_m(x)
$$
and $\Phi^{-1}(f_2),\ldots,\Phi^{-1}(f_m)$ are a.e.\ concave.
\end{enumerate}
\vskip.1cm
If the functions $\Phi^{-1}(h),\Phi^{-1}(f_1),\ldots,\Phi^{-1}(f_m)$
are a.e.\ concave, the conclusion remains valid if  the second
inequality in \eqref{eq:A} is omitted. In particular, in this case,
for any $\sum_i\lambda_i>1$, equality holds if and only if either
\eqref{eq:H1} or \eqref{eq:H2} holds.
\end{thm}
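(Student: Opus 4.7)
The plan is to work with Borell's function
$C(t,x_1,\ldots,x_m) := \Phi^{-1}(Q_t h(\sum_i \lambda_i x_i)) - \sum_i \lambda_i \Phi^{-1}(Q_t f_i(x_i))$,
which (as recalled in section \ref{sec:borell}) satisfies a parabolic equation $\partial_t C = \mathcal{L}_t C$ on $(0,\infty)\times\mathbb{R}^{mn}$. Hypothesis \eqref{eq:B} gives $C(0,\cdot)\ge 0$, while equality in the Ehrhard-Borell inequality is precisely $C(1,0,\ldots,0)=0$. To circumvent the possible singularity of $C$ at $t=0$ (e.g. for indicator data), my strategy is to extract structural information about $Q_{t_0}h$ and $Q_{t_0}f_i$ for some $0<t_0<1$, and then to pass back to $h,f_i$ themselves by inverting the heat semigroup, exploiting its analyticity and injectivity on integrable data.

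Before attacking the three cases of the theorem, I would perform two structural reductions. First, the $m$-term inequality reduces to the two-term case: group $\sum_{i\ge 2}\lambda_i x_i$ into a single variable, observe that equality in the full inequality forces equality at every intermediate pairwise step, and handle pairwise equality directly; iterating recovers the $m$-fold characterization. Second, the $n$-dimensional case reduces to $n=1$ by slicing: equality $C(1,0)=0$ forces a.e.\ equality in a one-dimensional Ehrhard-Borell inequality for a.e.\ slice transverse to any fixed direction, and an $n$-induction (gluing the affine data $a,b,b_i$ across slices) reassembles the global statement. The a.e.\ version of Theorem \ref{thm:be}, together with Remark \ref{rem:null}, is essential here because slicing destroys everywhere-statements on null sets.

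For the nondegenerate case ($\sum_i\lambda_i\ne 1$ and $\lambda_1-\sum_{i\ge 2}\lambda_i\ne 1$), my plan is to apply a probabilistic form of the strong parabolic maximum principle. Combining the Stroock-Varadhan support theorem \cite{SV72} with the geometric argument of \cite{NvdS16}, equality at an interior point propagates locally as soon as the Lie algebra generated by the drift and diffusion vector fields of the It\^{o} diffusion associated with $\mathcal{L}_t$ spans the tangent space at that point; I would verify the bracket-generation condition by a direct computation using that the diffusion matrix is strictly positive definite in this regime. Local propagation together with real-analyticity of $Q_t$ then upgrades $C\equiv 0$ to the whole domain, which yields the functional relation $\Phi^{-1}(Q_{t_0}h(\sum_i\lambda_ix_i))\eqa\sum_i\lambda_i\Phi^{-1}(Q_{t_0}f_i(x_i))$. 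This is an a.e.\ Cauchy-type equation whose solutions are jointly affine with a common linear part, and inverting the heat semigroup produces \eqref{eq:H1}; the \eqref{eq:H2} branch is obtained as the limit $a,b_i\to\infty$ proportionally.

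The core obstacle, and the heart of the argument, is the degenerate case $\sum_i\lambda_i=1$ in one dimension with $m=2$ (the case $\lambda_1-\sum_{i\ge 2}\lambda_i=1$ being reduced to it by the substitution $f_1(x)\mapsto 1-f_1(-x)$, $h(x)\mapsto 1-h(-x)$). Here the second-order part of $\mathcal{L}_t$ collapses onto the single direction $(1,1)\in\mathbb{R}^2$, the Lie algebra rank condition fails everywhere, and the strong maximum principle is genuinely unavailable. I would carry out a careful orbit analysis of the associated control system: the degeneracy constrains $C$ to be essentially one-dimensional transverse to the hyperplane $\lambda x+\mu y=\mathrm{const}$, and a direct ODE computation along each orbit---propagating the boundary condition $C(0,\cdot)\ge 0$ together with the interior equality---shows that either \eqref{eq:H1}/\eqref{eq:H2} holds or else $Q_{t_0}h=Q_{t_0}f_1=Q_{t_0}f_2$ with a common a.e.\ concave $\Phi^{-1}$-profile. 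Undoing the semigroup yields the third bullet point. The induction $n=1\Rightarrow n$ distributes this rigidity across coordinates, and the final clause (the a.e.\ concave case with \eqref{eq:A} relaxed) follows because a.e.\ concavity of $\Phi^{-1}(f_i)$ is preserved by $Q_t$, so the same dichotomy applies on the enlarged admissible range while the additional degenerate equality cases are excluded by the strict inequality $\sum_i\lambda_i>1$. I expect the most delicate step to be this degenerate one-dimensional analysis, where the maximum-principle toolbox must be replaced by a hands-on study of degenerate diffusion orbits.
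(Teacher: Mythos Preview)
Your overall architecture agrees with the paper: reduce to $m=2$ and $n=1$, handle the nondegenerate regime via a strong maximum principle, treat $\lambda+\mu=1$ directly, recover $\lambda-\mu=1$ by the duality $f\mapsto 1-f(-\cdot)$, and invert the heat semigroup at the end. A few minor divergences are worth noting. In the nondegenerate case the diffusion is uniformly elliptic, so Girsanov alone (Lemma~\ref{lem:girs}) already gives full support and no Lie brackets are needed; the paper reserves that machinery for the degenerate case. The case \eqref{eq:H2} is not obtained as a limit of \eqref{eq:H1} but as a separate branch of the heat-semigroup inversion (Lemma~\ref{lem:invert}), arising when the Lipschitz constant of $u_f(t,\cdot)$ saturates at $t^{-1/2}$. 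And the $m\to 2$ splitting requires a nontrivial choice of the intermediate weight $\mu$ (Lemma~\ref{lem:feas}) so that both $(\lambda_1,\mu)$ and $(\lambda_2/\mu,\ldots,\lambda_m/\mu)$ still satisfy \eqref{eq:A}; the naive choice $\mu=\sum_{i\ge 2}\lambda_i$ can violate $|\lambda_1-\mu|\le 1$.

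The substantive gap is in your degenerate one-dimensional plan. When the Lie-bracket condition fails everywhere on the support you obtain only a single scalar identity there, namely $\partial_x b_1+\partial_y b_1=\partial_x b_2+\partial_y b_2$, and this alone does not force either outcome; your ``ODE computation along each orbit'' is too vague to indicate how it would. The missing idea, which the paper exploits crucially (Lemma~\ref{lem:exmin}), is that every point of $\supp\mu_t$ is a \emph{minimizer} of the smooth function $C(1-t,\cdot)$, since $C\ge 0$ globally and $C=0$ on the support. The first-order condition $\nabla C=0$, fed back into the parabolic equation, together with positive-semidefiniteness of the Hessian, yields the additional identities $u_h'=u_f'=u_g'$ and $u_h''=u_f''=u_g''$ on the support. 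Only after combining these optimality conditions with the Lie-bracket identity does everything collapse to the single equation $(u_f(1-t,x)-u_g(1-t,y))\,u_h''(1-t,\lambda x+\mu y)=0$ on $\supp\mu_t$, and it is this dichotomy that separates the concave equality case from \eqref{eq:H1}/\eqref{eq:H2}. Without surfacing the optimality conditions your degenerate analysis does not close.
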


In Theorem \ref{thm:main} we assumed that the functions $h,f_1,\ldots,f_m$ 
are all nontrivial. If any of the functions is trivial, then the 
equality cases are trivially verified and no analysis is needed. We list 
these cases separately in the following lemma, which completes our 
characterization of the equality cases of Theorem \ref{thm:be}.

\begin{lem}[Trivial equality cases]
\label{lem:triv}
Let $\lambda_1,\ldots,\lambda_m>0$ and 
$h,f_1,\ldots,f_m:\mathbb{R}^n\to[0,1]$ be measurable
functions satisfying \eqref{eq:B}, at least one of which is trivial.
Then
$$      \Phi^{-1}\bigg(\int h\,d\gamma_n\bigg)=
        \sum_{i\le m}\lambda_i\Phi^{-1}\bigg(
        \int f_i\,d\gamma_n
        \bigg)
$$
if and only if \textbf{either} $h\eqa 1$, at least one of the functions
$f_i$ satisfies $f_i\eqa 1$, and none of the $f_i$ satisfy $f_i\eqa 0$;
\textbf{or} $h\eqa 0$ and at least one of the $f_i$ satisfies $f_i\eqa 0$.
\end{lem}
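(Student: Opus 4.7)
The plan is to reduce this lemma to an explicit case analysis using the conventions $\Phi^{-1}(0) = -\infty$, $\Phi^{-1}(1) = +\infty$, and $\infty - \infty = -\infty$ built into the paper's notation. Because each trivial function contributes a $\pm\infty$ to the relevant sums, both sides of the purported equality can be evaluated directly as extended reals, and the argument reduces to combinatorial bookkeeping of which terms in $\sum_i \lambda_i \Phi^{-1}(\int f_i\,d\gamma_n)$ are $\pm\infty$.

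For the sufficient direction I would verify that each listed configuration produces equality and simultaneously check that \eqref{eq:B} is satisfied, so the hypotheses of the lemma are consistent. If $h \eqa 0$ and some $f_j \eqa 0$, then $\int h\,d\gamma_n = 0$ gives $\Phi^{-1}(\int h\,d\gamma_n) = -\infty$, while the term $\lambda_j \Phi^{-1}(\int f_j\,d\gamma_n) = -\infty$ forces the right-hand side to $-\infty$ by the convention, so equality holds. To check \eqref{eq:B}, I would use that the linear surjection $(x_1,\dots,x_m)\mapsto\sum_i\lambda_i x_i$ pulls Lebesgue null sets back to null sets, so $h \eqa 0$ yields $\Phi^{-1}(h(\sum_i\lambda_i x_i)) = -\infty$ a.e., and $f_j \eqa 0$ makes the right-hand side of \eqref{eq:B} equal $-\infty$ a.e.\ as well. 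Symmetrically, when $h \eqa 1$, some $f_j \eqa 1$, and no $f_i \eqa 0$, both sides of the equality equal $+\infty$ (on the right-hand side, the remaining terms lie in $(-\infty,+\infty]$ because each $\int f_i\,d\gamma_n > 0$ whenever $f_i$ is not a.e.\ zero, using strict positivity of the Gaussian density), and \eqref{eq:B} is automatic since its left-hand side is $+\infty$ a.e.

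For the necessary direction I would case-split on the identity of the trivial function(s). If some $f_j \eqa 0$, then $\lambda_j \Phi^{-1}(\int f_j\,d\gamma_n) = -\infty$ forces the right-hand side of the equality to $-\infty$, so equality requires $\int h\,d\gamma_n = 0$ and hence $h \eqa 0$, landing in the second bullet. If no $f_i \eqa 0$ but some $f_j \eqa 1$, then $\Phi^{-1}(\int f_j\,d\gamma_n) = +\infty$ and all other terms lie in $(-\infty,+\infty]$, so the right-hand side equals $+\infty$ and equality forces $h \eqa 1$, the first bullet. The only remaining possibility is that all $f_i$ are nontrivial, in which case $h$ itself must be the trivial function by hypothesis; then the left-hand side is $\pm\infty$ while the right-hand side is a finite real number, contradicting equality. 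The one subtlety worth flagging is careful handling of the convention $\infty - \infty = -\infty$, which is exactly what forces the first bullet to exclude configurations with some $f_i \eqa 0$ (otherwise the right-hand side would collapse to $-\infty$ instead of $+\infty$); apart from this and the routine pullback of null sets through the linear map needed to verify \eqref{eq:B}, the argument is pure bookkeeping.
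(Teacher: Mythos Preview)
Your proposal is correct and follows essentially the same approach as the paper: both arguments reduce the lemma to bookkeeping with the convention $\infty-\infty=-\infty$ after observing that any trivial function forces a $\pm\infty$ on one side of the equality. Your case split is organized slightly differently (you branch first on which $f_i$ are trivial, then handle the case where only $h$ is trivial), and you additionally verify that the listed configurations are compatible with \eqref{eq:B}; this last step is not strictly needed since \eqref{eq:B} is a standing hypothesis of the lemma, but it does confirm the ``if'' direction is non-vacuous.
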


\begin{proof}
If any of the functions $f_i$ is trivial, then $\Phi^{-1}(\int f_i d\gamma_n)
=\pm\infty$. Thus the equality assumption implies that 
$\Phi^{-1}(\int h d\gamma_n)=\pm\infty$, so $h$ must be trivial as well. 
Thus we need only consider
the cases $h\eqa 1$ and $h\eqa 0$. The conclusion is now readily verified
using the convention $\infty-\infty=-\infty+\infty = -\infty$.
\end{proof}

Let us verify that Theorem \ref{thm:simpleeq} follows from our general 
characterization.

\begin{proof}[Proof of Theorem \ref{thm:simpleeq}]
The `if' part is readily verified using $\gamma_n\{x:\langle a,x\rangle+b\ge 0\}
= \Phi(b/\|a\|)$, that $\lambda A+(1-\lambda)A=A$ if $A$ is convex, and
that $(1+\mu)\cl B^c-\mu B = \cl B^c$ if $B$ is closed and convex.
It therefore remains to prove the `only if' part.

Let $\lambda_1=\lambda$, $\lambda_2=\mu$, $f_1=1_A$, $f_2=1_B$, 
and $h=1_{\lambda A+\mu B}$. It is readily verified that \eqref{eq:A} 
and \eqref{eq:B} are satisfied. Moreover, the assumption 
$0<\gamma_n(A),\gamma_n(B)<1$ and Lemma \ref{lem:triv} rule out 
the trivial equality cases. The equality cases will therefore follow 
from Theorem \ref{thm:main}. Note that \eqref{eq:H1} is ruled out as 
$f_1,f_2,h$ are indicator functions. Moreover, $\Phi^{-1}(1_C)$ is a.e.\ 
concave if and only if $C$ differs by a set of measure zero from a 
convex set. It therefore follows that $A,B$ must satisfy the conclusions
of Theorem \ref{thm:simpleeq} \emph{modulo sets of measure zero}. 

The only special feature of Theorem \ref{thm:simpleeq} that does not follow
directly from our main result is the elimination of the measure zero sets.
This can be done in this case because we assumed that $A,B$ are closed.
For sake of illustration, let us sketch the argument in the case
$\lambda\ne 1-\mu$ and $\lambda\ne 1+\mu$. Define
$$
	\tilde A=\{x\in\mathbb{R}^n:\langle a,x\rangle + b \ge 0\},\qquad
	\tilde B=\{x\in\mathbb{R}^n:\langle a,x\rangle + c \ge 0\}
$$
for $a,b,c$ as in Theorem \ref{thm:simpleeq}. What we have shown so far
is that $A,B$ must differ from $\tilde A,\tilde B$ by sets of measure
zero, that is, $A=(\tilde A\backslash N_{\tilde A})\cup N_{{\tilde A}^c}$ and 
$B=(\tilde B\backslash N_{\tilde B})\cup N_{{\tilde B}^c}$, where we 
used the notation $N_C$ to denote a null set contained in $C$. First 
note that we must have $N_{\tilde A}=N_{\tilde B}=\varnothing$ as
$A,B$ are closed. Now suppose that $N_{\tilde A^c}\ne\varnothing$.
Then there exists $z\in N_{\tilde A^c}$ 
such that $\langle a,z\rangle + b'=0$ for some $b'>b$. But then $\lambda 
A+\mu B\supseteq \{x\in\mathbb{R}^n:\langle a,x\rangle + \lambda b' +\mu 
c\ge 0\}$, which has strictly larger measure than $\lambda\tilde 
A+\mu\tilde B$. As this would violate the equality assumption, we have 
shown that $N_{\tilde A^c}=N_{\tilde B^c}=\varnothing$, completing the proof 
of the first case of Theorem \ref{thm:simpleeq}. The null set of the
remaining cases can be eliminated in the same manner.
\end{proof}

\begin{rem}
\label{rem:null}
The proof of Theorem \ref{thm:simpleeq} illustrates why Theorem 
\ref{thm:be} should be formulated in terms of a.e.\ inequality in 
\eqref{eq:B}. If we were to require that the inequality in \eqref{eq:B} 
holds everywhere, as was done in \cite{Bor08}, the applicability of 
Theorem \ref{thm:be} would become sensitive to null sets: if the 
functions $h,f_i$ are modified on a set of measure zero, they may no 
longer satisfy \eqref{eq:B} everywhere. This sensitivity to null sets is 
inherent to the classical formulation of the Brunn-Minkowski inequality: 
if we modify sets $A,B$ on a null set, their sum $A+B$ could nonetheless 
change on a set of positive measure (or even become nonmeasurable). This 
is problematic for the characterization of the equality cases, as the 
proof of Theorem \ref{thm:main} can only yield candidate equality cases 
up to a.e.\ equivalence, and thus cannot fully characterize the equality 
cases in the strict formulation of the inequality. The latter are 
amenable to a separate analysis of the null sets, as is illustrated in 
the proof of Theorem \ref{thm:simpleeq}. However, as Theorem 
\ref{thm:be} remains valid in its a.e.\ formulation, such null set 
issues should be viewed as an artefact of choosing an unnatural 
formulation of the underlying inequality, rather than being fundamental 
to this theory. We have therefore adopted the viewpoint of 
\cite{BL76,Dub77} that Brunn-Minkowski type inequalities are most 
naturally formulated in a form that is insensitive to null sets. One can 
analogously replace the sum $A+B$ in the Brunn-Minkowski inequality by 
the essential sum $A\oplus B$ which is insensitive to null sets, see 
\cite{BL76} for precise statements.
\end{rem}

The remainder of this paper is devoted entirely to the proof of Theorem 
\ref{thm:main}. The `if' part of the theorem is easy and we dispose of 
it presently. The `only if' part is the core of the theorem, and the 
following sections are devoted to its analysis.

\begin{proof}[Proof of sufficiency in Theorem \ref{thm:main}]
We must verify that each of the cases listed in the theorem statement is an 
equality case of Theorem \ref{thm:be}. That is, we must verify
that \eqref{eq:B} holds, and that the conclusion holds with equality.
\begin{enumerate}[$\bullet$]
\itemsep\abovedisplayskip
\item
\textbf{Case \eqref{eq:H1}.} It is easily seen that \eqref{eq:B} holds.
To verify equality, note that
$$
	\int \Phi(\langle a,x\rangle+b)\,\gamma_n(dx) =
	\int 1_{y\le\langle a,x\rangle+b}\,
	\gamma_1(dy) \, \gamma_n(dx) =
	\Phi(b/\sqrt{1+\|a\|^2}).
$$
\item
\textbf{Case \eqref{eq:H2}.} Let $\tilde h(x)=1_{\langle a,x\rangle+b}$, 
$\tilde f_i=1_{\langle a,x\rangle+b_i}$. Then \eqref{eq:B} holds for 
$\tilde h,\tilde f_i$: if any $\tilde f_i(x_i)=0$, then \eqref{eq:B} 
holds as its right-hand side equals $-\infty$; while if all $\tilde 
f_i(x_i)=1$, then \eqref{eq:B} holds as $\tilde 
h(\sum_i\lambda_ix_i)=1$. As $h\eqa\tilde h$ and $f_i\eqa\tilde f_i$, we 
have verified that \eqref{eq:B} holds for $h,f_i$ as well. To verify 
equality, note that 
$$
	\int 1_{\langle a,x\rangle + b \ge 0}\,\gamma_n(dx) =
	\Phi(b/\|a\|).
$$
\item \textbf{Case $\sum_i\lambda_i=1$.} We already verified that \eqref{eq:H1}
and \eqref{eq:H2} are equality cases. For the remaining case, let
$V$ be a concave function so that $h\eqa f_i\eqa\Phi(V)$.
As $\sum_i\lambda_i=1$, \eqref{eq:B} follows immediately from the concavity
of $V$. On the other hand, equality follows as $\sum_i\lambda_i=1$ and
as $h,f_i$ all coincide a.e.
\item \textbf{Case $\lambda_1-\sum_{i\ge 2}\lambda_i=1$.} 
We already verified that \eqref{eq:H1}
and \eqref{eq:H2} are equality cases. For the remaining case,
let $V$ be a concave function so that $f_2\eqa\Phi(V)$.
Substituting the stated relations between $h,f_i$ 
into \eqref{eq:B} and using the identity
$\Phi^{-1}(1-x)=-\Phi^{-1}(x)$, we
find that \eqref{eq:B} is equivalent in this case to
$$
	-V\Bigg(-\sum_{1\le i\le m}\lambda_ix_i\Bigg)
	\gea -\lambda_1 V(-x_1) +
	\sum_{2\le i\le m}\lambda_i V(x_i).
$$
Rearranging this inequality gives
$$
	V(-x_1) \gea
	\mu_1V\Bigg(-\sum_{1\le i\le m}\lambda_ix_i\Bigg)
	+
	\sum_{2\le i\le m}\mu_i V(x_i),
$$
where $\mu_1=1/\lambda_1$ and $\mu_i=\lambda_i/\lambda_1$ for $i\ge 2$
(one may verify using the convention $\infty-\infty=-\infty+\infty=-\infty$
that this claim is valid even when some of the terms may take the values 
$\pm\infty$). As $\lambda_1-\sum_{i\ge 2}\lambda_i=1$ implies $\sum_i\mu_i=1$, 
\eqref{eq:B} follows from the concavity of $V$. Equality is 
easily verified using 
$$
	\Phi^{-1}\bigg(\int (1-h(-x))\,d\gamma_n\bigg) =
	-\Phi^{-1}\bigg(\int h\,d\gamma_n\bigg),
$$
where we used again $\Phi^{-1}(1-x)=-\Phi^{-1}(x)$ and that 
$\gamma_n$ is symmetric.
\end{enumerate}
\vskip.1cm
This completes our verification of the `if' part of Theorem 
\ref{thm:main}.
\end{proof}

\section{Borell's construction}
\label{sec:borell}

The aim of this section is to recall the construction that appears in 
Borell's proof of Theorem \ref{thm:be}, which forms the starting point 
for our analysis. We will consider only the simplest case of $m=2$ 
functions, which is what will be needed in the sequel. Almost all of our 
analysis will be done in the case of $m=2$ functions, and we return to 
the case $m>2$ in section \ref{sec:gen} at the end of the paper.

Let $\lambda,\mu>0$ and $f,g,h:\mathbb{R}^n\to[0,1]$ be nontrivial
measurable functions.
We define the function $u_f$ in terms of the heat semigroup $Q_t$ by
$$
	u_f(t,x) := \Phi^{-1}(Q_tf(x)),\qquad
	Q_tf(x) := \int f(x+\sqrt{t}z)\,\gamma_n(dz).
$$
The functions $u_g,u_h$ are definite analogously.
A central role throughout this paper will be played by the following
function:
$$
	C(t,x,y) := u_h(t,\lambda x+\mu y) -
	\lambda u_f(t,x) - \mu u_g(t,y).
$$
The key observation of Borell is that $C$ solves a parabolic equation.

\begin{lem}
\label{lem:para}
For every $t>0$, we have
$$
	\frac{\partial C(t,x,y)}{\partial t} =
	\frac{1}{2}\Delta_\rho C(t,x,y) 
	+\langle b(t,x,y),\nabla C(t,x,y)\rangle
	-\frac{1}{2}\|\nabla u_h(t,\lambda x+\mu y)\|^2C(t,x,y),
$$
where $\rho := (1-\lambda^2-\mu^2)/2\lambda\mu$,
$$
	\Delta_\rho := 
	\sum_{i=1}^n
	\Bigg\{
	\frac{\partial^2}{\partial x_i^2} +
	2\rho
	\frac{\partial^2}{\partial x_i \partial y_i} +
	\frac{\partial^2}{\partial y_i^2}
	\Bigg\},
$$
and
$$
	b(t,x,y) :=
	-\frac{1}{2}\left[
	\begin{array}{c}
	u_f(t,x)(\nabla u_h(t,\lambda x+\mu y)+\nabla u_f(t,x)) \\
	u_g(t,y)(\nabla u_h(t,\lambda x+\mu y)+\nabla u_g(t,y))
	\end{array}
	\right].
$$
\end{lem}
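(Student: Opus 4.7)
The plan is to derive the equation by a direct computation. I would first establish the underlying pointwise PDE satisfied by $u_f$ (and, by identical reasoning, by $u_g$ and $u_h$). Differentiating the identity $\Phi(u_f(t,x)) = Q_tf(x)$ in $t$ and in $x$, using the heat equation $\partial_t Q_tf = \tfrac12 \Delta Q_tf$, and exploiting the identity $\phi'(s)=-s\phi(s)$ for $\phi = \Phi'$, one obtains
\[
\partial_t u_f = \tfrac12\Delta u_f - \tfrac12 u_f\|\nabla u_f\|^2,
\]
and analogously for $u_g$ and $u_h$. This is the only place where the specific form of $\Phi$ enters the computation; the rest is algebra.

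Next, I would apply $\partial_t$ to $C$ and substitute these equations (chain rule to convert $\partial_t u_h(t,\lambda x+\mu y)$ into an evaluation at $z=\lambda x+\mu y$). For the second-order spatial terms, the chain rule gives $\partial_{x_i}^2 u_h(t,\lambda x+\mu y) = \lambda^2(\partial_i^2 u_h)(t,z)$, $\partial_{y_i}^2 u_h = \mu^2\partial_i^2 u_h$, and $\partial_{x_i}\partial_{y_i}u_h = \lambda\mu\,\partial_i^2 u_h$, so the contribution of $u_h$ to $\Delta_\rho C$ equals $(\lambda^2+2\rho\lambda\mu+\mu^2)\Delta u_h$. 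The specific choice $\rho=(1-\lambda^2-\mu^2)/2\lambda\mu$ in the statement is made precisely so that this prefactor collapses to $1$; hence $\Delta_\rho C = \Delta u_h - \lambda\Delta u_f - \mu\Delta u_g$, and the Laplacian contributions to $\partial_t C$ and $\tfrac12\Delta_\rho C$ cancel. What remains is
\[
\partial_t C - \tfrac12\Delta_\rho C = -\tfrac12 u_h\|\nabla u_h\|^2 + \tfrac\lambda2 u_f\|\nabla u_f\|^2 + \tfrac\mu2 u_g\|\nabla u_g\|^2.
\]

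Finally, I would recognize the right-hand side as $\langle b,\nabla C\rangle - \tfrac12\|\nabla u_h\|^2 C$. Expanding $-\tfrac12\|\nabla u_h\|^2 C$ using $C = u_h - \lambda u_f - \mu u_g$ cancels the $-\tfrac12 u_h\|\nabla u_h\|^2$ term and reduces the identity to
\[
\tfrac\lambda2 u_f(\|\nabla u_f\|^2-\|\nabla u_h\|^2) + \tfrac\mu2 u_g(\|\nabla u_g\|^2-\|\nabla u_h\|^2) = \langle b,\nabla C\rangle.
\]
Using the difference-of-squares factorization $\|\nabla u_f\|^2-\|\nabla u_h\|^2 = -\langle\nabla u_h+\nabla u_f,\nabla u_h-\nabla u_f\rangle$ together with $\nabla_x C = \lambda(\nabla u_h - \nabla u_f)$ and $\nabla_y C = \mu(\nabla u_h - \nabla u_g)$, this is precisely $\langle b,\nabla C\rangle$ with $b$ as defined. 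The entire computation is elementary; I do not anticipate any real obstacle, only the bookkeeping of terms. The two conceptually important points are the nonlinear pointwise equation of the first step (in which the Gaussian structure of $\Phi$ plays its sole role) and the choice of $\rho$, which is forced by the chain-rule cancellation of the second-order contribution from $u_h(t,\lambda x+\mu y)$.
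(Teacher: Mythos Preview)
Your proposal is correct and is exactly the chain-rule computation the paper alludes to (the paper itself only sketches this, deferring the details to \cite{Bor03}). Your identification of the two key points---the nonlinear equation $\partial_t u_f=\tfrac12\Delta u_f-\tfrac12 u_f\|\nabla u_f\|^2$ coming from $\phi'=-s\phi$, and the choice of $\rho$ forced by $\lambda^2+2\rho\lambda\mu+\mu^2=1$---matches the structure of Borell's original argument.
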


\begin{proof}
For any nontrivial function $f:\mathbb{R}^n\to[0,1]$, the function 
$(t,x)\mapsto Q_tf(x)$ takes values in $(0,1)$, is smooth and satisfies 
the heat equation $\frac{d}{dt}Q_tf=\frac{1}{2}\Delta Q_tf$
on $(0,\infty)\times\mathbb{R}^n$. The conclusion follows by a
straightforward, if somewhat tedious, application of the chain rule
of calculus, see \cite{Bor03} for the detailed computation.
\end{proof}

Observe that the equation for $C$ given in Lemma \ref{lem:para} is 
parabolic if and only if $-1\le\rho\le 1$, which is equivalent to the 
conditions $\lambda+\mu\ge 1$ and $|\lambda-\mu|\le 1$ of Theorem 
\ref{thm:simpleeb}. This is the only property of this equation that is 
needed to prove the Ehrhard-Borell inequality; the precise form of the 
different terms in the equation is irrelevant for this purpose. However, 
the specific form of the vector field $b$ will be essential later on for 
our characterization of the equality cases. \emph{From now on until 
section \ref{sec:gen}, we will always assume that $\lambda+\mu\ge 1$ and 
$|\lambda-\mu|\le 1$.}

\begin{rem}
While the proof of Lemma \ref{lem:para} is a matter of explicit 
computation, it may appear somewhat miraculous that the function $C$ 
happens to satisfy a parabolic equation under the assumptions of the 
Ehrhard-Borell inequality. Such phenomena however arise rather broadly
in the study of geometric partial differential equations, see \cite{And15} 
for a survey. In the present case, a probabilistic explanation of the 
hidden convexity underlying this computation and of the significance of 
the conditions $\lambda+\mu\ge 1$ and $|\lambda-\mu|\le 1$ is developed in 
\cite{vH17}. While these different viewpoints may be illuminating for
understanding the origin of this phenomenon, the concrete
statement of Lemma \ref{lem:para} is all that will be needed for the 
purposes of this paper.
\end{rem}

A technical difficulty that is faced both in the proof of the 
Ehrhard-Borell inequality and in our proof of the equality cases is that 
the function $C$ may be singular at $t=0$. For example, when $f=1_A$, 
$g=1_B$, and $h=1_{\lambda A+\mu B}$ as in the proof of Theorem 
\ref{thm:simpleeb}, the function $C$ blows up as $t\downarrow 0$. Fortunately,
$C$ is highly regular for $t>0$. Let us begin by collecting some useful 
regularity properties.

\begin{lem}
\label{lem:anal}
For any $t>0$, the function $x\mapsto u_f(t,x)$ (and $u_g,u_h$) is
real analytic and Lipschitz with constant $t^{-1/2}$.
In particular, $(x,y)\to C(t,x,y)$ is real analytic.
\end{lem}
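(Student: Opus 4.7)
The statement has three parts: (i) real analyticity of $x \mapsto u_f(t,x)$ for fixed $t>0$; (ii) the Lipschitz bound with constant $t^{-1/2}$; and (iii) real analyticity of $(x,y) \mapsto C(t,x,y)$. Part (iii) is immediate from (i) since $C$ is obtained from $u_f,u_g,u_h$ by pre-composition with the linear map $(x,y) \mapsto \lambda x + \mu y$ and a linear combination. So the work is in (i) and (ii).

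\textbf{Part (i): real analyticity.} I would write $Q_t f(x) = \int f(y) p_t(x-y)\,dy$ with the Gaussian kernel $p_t(z) = (2\pi t)^{-n/2} e^{-\|z\|^2/(2t)}$. For fixed $t>0$, $p_t$ extends to an entire function of $z \in \mathbb{C}^n$, and for $x$ in any bounded tube around $\mathbb{R}^n$ one has $|p_t(x-y)| \le C_t e^{-\|\mathrm{Re}(x)-y\|^2/(4t)}$ by completing the square. Together with $0 \le f \le 1$ this produces a locally uniform integrable majorant in complex $x$, so dominated convergence applied to Cauchy--Riemann difference quotients shows $x \mapsto Q_t f(x)$ extends to a holomorphic function on a complex neighborhood of $\mathbb{R}^n$; hence it is real analytic. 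Since $f$ is nontrivial and $p_t > 0$, we have $0 < Q_t f(x) < 1$ everywhere, and because $\Phi:\mathbb{R}\to(0,1)$ is entire with nowhere-vanishing derivative $\phi$, the analytic inverse function theorem makes $\Phi^{-1}:(0,1)\to\mathbb{R}$ real analytic. Composition yields $u_f$ real analytic, and the same argument gives (iii).

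\textbf{Part (ii): Lipschitz bound.} Once the pointwise bound $\|\nabla_x u_f(t,x)\| \le t^{-1/2}$ is established, the Lipschitz property follows by integrating along line segments. To get the pointwise bound, I would first note that the chain rule together with $\phi'(u)/\phi(u) = -u$ gives the evolution equation
\[
  \partial_t u_f \;=\; \tfrac{1}{2}\Delta u_f \;-\; \tfrac{1}{2}\,u_f\,\|\nabla u_f\|^2
\]
on $(0,\infty)\times\mathbb{R}^n$. Setting $w := \|\nabla u_f\|^2$ and differentiating yields
\[
  \partial_t w \;=\; \tfrac{1}{2}\Delta w \;-\; u_f\,\langle\nabla u_f,\nabla w\rangle \;-\; \|\nabla^2 u_f\|_{HS}^2 \;-\; w^2 \;\le\; \tfrac{1}{2}\Delta w \;-\; u_f\,\langle\nabla u_f,\nabla w\rangle \;-\; w^2.
\]
The spatially constant function $\tilde w(t):=1/t$ is an exact solution of $\partial_t\tilde w = -\tilde w^2$, so a parabolic comparison argument for $w - \tilde w$ on $(\varepsilon,T]\times\mathbb{R}^n$ formally produces $w(t,x)\le 1/t$; sending $\varepsilon\downarrow 0$ and $T\uparrow\infty$ gives the claimed gradient bound, and hence the Lipschitz constant $t^{-1/2}$.

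\textbf{Main obstacle.} Parts (i) and (iii) are soft; the content lies in the sharp, dimension-free constant $t^{-1/2}$ in Part (ii). The naive Stein/integration-by-parts bound only yields $\|\nabla Q_t f\|\le\sqrt{n/t}$, which is then amplified by the factor $1/\phi(\Phi^{-1}(Q_tf))$ that blows up where $Q_tf$ is near $0$ or $1$; the fact that these two effects cancel out to leave the sharp constant $1/\sqrt{t}$ is a manifestation of the Gaussian isoperimetric inequality. To justify the maximum principle on the noncompact domain I would restrict to large Euclidean balls and control the boundary contribution using the a priori estimate $\|\nabla Q_sf\|_\infty\le\sqrt{n/s}$ (which is enough to rule out a maximum of $w-\tilde w$ escaping to spatial infinity on $[\varepsilon,T]$), or equivalently invoke Bobkov's semigroup inequality, which is the dual functional form of the estimate being proved.
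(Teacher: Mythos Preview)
Your treatment of real analyticity is correct and essentially equivalent to the paper's, just packaged differently: the paper expands $Q_tf(x)$ as an explicit power series in $x$ and bounds the coefficients by Gaussian moments, whereas you extend the heat kernel holomorphically and apply dominated convergence. Both are standard and yield the same conclusion, and your deduction of (iii) from (i) matches the paper's.

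For the Lipschitz bound, the paper does not prove it at all but simply cites \cite[p.~423]{BGL14}. Your PDE derivation is correct: $u_f$ does satisfy $\partial_t u_f=\tfrac12\Delta u_f-\tfrac12 u_f\|\nabla u_f\|^2$, and $w=\|\nabla u_f\|^2$ does obey the differential inequality you wrote, with $1/t$ the natural barrier. The gap is in the justification of the comparison on $(\varepsilon,T]\times\mathbb{R}^n$. Your proposed remedy, the a priori bound $\|\nabla Q_sf\|_\infty\le\sqrt{n/s}$, controls $\nabla Q_sf$ but not $w=\|\nabla Q_sf\|^2/\phi(u_f)^2$, since $\phi(u_f)$ can vanish at spatial infinity (take $f$ an indicator of a half-space: $Q_tf\to 0$ or $1$ as $|x|\to\infty$). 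So this bound neither prevents a maximum of $w-1/t$ from escaping to infinity nor supplies usable initial data at $t=\varepsilon$. A workable route along your lines is to first take $f$ smooth with values in $[\delta,1-\delta]$, so that $u_f$ and $\nabla u_f$ are globally bounded at $t=0$; then the ODE comparison with $\phi'=-\phi^2$, $\phi(0)=\|\nabla u_f(0,\cdot)\|_\infty^2$, gives $w(t,\cdot)\le 1/t$ after letting the initial bound tend to infinity, and general $f$ follows by approximation. Your fallback to ``Bobkov's semigroup inequality'' is in fact the argument in \cite{BGL14} that the paper cites, so at that point you are back to the paper's proof.
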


\begin{proof}
The Lipschitz property of $u_f(t,\cdot)$ is given in
\cite[p.\ 423]{BGL14}. Next, note that $Q_tf$ is real analytic as
we have the polynomial expansion
\begin{align*}
	Q_tf(x) &= 
	\int f(\sqrt{t}y) e^{\langle x,y\rangle/\sqrt{t}-\|x\|^2/2t} 
	d\gamma_n(y) \\ &=
	e^{-\|x\|^2/2t}
	\sum_{n=0}^\infty\frac{t^{-n/2}}{n!}
	\int f(\sqrt{t}y) \langle x,y\rangle^n
	d\gamma_n(y) 
\end{align*}
which is absolutely convergent as
$$
	\int |f(\sqrt{t}y) \langle x,y\rangle^n|
	d\gamma_n(y) 
	\le
	\int |\langle x,y\rangle|^nd\gamma_n(y)
	\le \|x\|^n (C\sqrt{n})^n
$$
for a universal constant $C$ (while $n!\gtrsim (C'n)^n$ by Stirling).
Analogously, as
$\Phi(x)=Q_11_{[0,\infty)}(x)$, the function $\Phi$ is real analytic
as well. As $\Phi$ is strictly increasing, it follows that $\Phi^{-1}$
is real analytic on $(0,1)$ by the inverse function theorem. Thus
$u_f(t,\cdot) := \Phi^{-1}(Q_tf)$ is real analytic.
Analyticity of $C$ follows trivially.
\end{proof}

The regularity provided by Lemma \ref{lem:anal} is not sufficient to 
prove the Ehrhard-Borell inequality; to apply the weak parabolic maximum 
principle, one must impose additional regularity by assuming initially 
that the functions $f,g,h$ are sufficiently smooth, and the general case 
is subsequently obtained by an approximation argument. It will however be 
difficult to obtain equality cases in this manner, as we cannot ensure 
that the equality cases are preserved by approximation. To establish the 
equality cases, we will therefore avoid approximating $f,g,h$ but rather 
work directly with the original functions. For this reason the minimal 
regularity provided by Lemma \ref{lem:anal} will play an important role 
in the sequel.

As was already noted in \cite{Bor03}, the maximum principle admits a 
probabilistic interpretation through the Feynman-Kac formula. As the 
probabilistic approach will play an important role in our analysis of 
the degenerate equality cases, we adopt this viewpoint throughout. To 
this end, recall the following representation of the function $C$. (The 
requisite probabilistic background can be found in \cite{KS88}.)

\begin{lem}
\label{lem:fk}
Consider the stochastic differential equation
$$
	d\left[
	\begin{matrix}
	X_t \\ Y_t
	\end{matrix}
	\right] =
	b(1-t,X_t,Y_t)\,dt + d\left[
	\begin{matrix}
	W_t\\
	B_t
	\end{matrix}
	\right],\qquad
	X_0=Y_0=0
$$
for $\mathbb{R}^n$-valued processes $X_t,Y_t$,
where $W_t,B_t$ are $n$-dimensional standard Brownian motions
with correlation $\langle W^i,B^j\rangle_t = \delta_{ij}\rho t$ and
$b,\rho$ are as defined in Lemma \ref{lem:para}.
This equation has a unique solution for $t\in[0,1)$, and we have for
any $t\in[0,1)$
\begin{align*}
	&
	\Phi^{-1}\bigg(\int h\, d\gamma_n\bigg) -
	\lambda\Phi^{-1}\bigg(\int f\, d\gamma_n\bigg) -
	\mu\Phi^{-1}\bigg(\int g\, d\gamma_n\bigg) 
	\\ &= C(1,0,0) =
	\mathbf{E}\bigg[
	C(1-t,X_t,Y_t)\,
	e^{-\frac{1}{2}\int_0^t\|\nabla u_h(1-s,\lambda
	X_s+\mu Y_s)\|^2ds}
	\bigg].
\end{align*}
If in addition $f,g,h$ take values in $[\delta,1-\delta]$ for some
$0<\delta<1$ and are smooth with bounded first and second derivatives,
then the equation admits a unique solution and the above representation
holds for all $t\in[0,1]$.
\end{lem}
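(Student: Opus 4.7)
The plan is to apply It\^o's formula to the process
\[
M_t := C(1-t,X_t,Y_t)\,\mathcal{E}_t,\qquad
\mathcal{E}_t := \exp\biggl(-\tfrac{1}{2}\int_0^t \|\nabla u_h(1-s,\lambda X_s+\mu Y_s)\|^2\,ds\biggr),
\]
and use the parabolic equation of Lemma \ref{lem:para} to verify that the drift of $M_t$ vanishes, so that $M_t$ is a (local) martingale. Since $M_0=C(1,0,0)$, the representation will follow as soon as $M_t$ is shown to be a true martingale.

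First I would establish existence and uniqueness of the SDE on $[0,1)$. The coefficient $b(1-t,x,y)$ is a polynomial expression in the values and gradients of $u_f(1-t,\cdot)$, $u_g(1-t,\cdot)$, $u_h(1-t,\cdot)$. By Lemma \ref{lem:anal}, for any $\epsilon>0$ each of these is real analytic and Lipschitz with constant $(1-t)^{-1/2}$ on $[0,1-\epsilon]$; in particular $b$ is real analytic and locally Lipschitz in $(x,y)$ on $[0,1-\epsilon]\times\mathbb{R}^{2n}$. Standard SDE theory (see \cite{KS88}) then yields a unique strong solution up to explosion. Non-explosion on $[0,1)$ follows from the linear growth bound $\|u_f(s,x)\|\le|u_f(s,0)|+s^{-1/2}\|x\|$ (and similarly for $u_g,u_h$ and their gradients); this gives $\mathbf{E}[\|X_t\|^2+\|Y_t\|^2]\lesssim e^{\int_0^t K(1-s)^{-1}ds}<\infty$ by Gr\"onwall, finite since $\int_0^{1-\epsilon}(1-s)^{-1}ds=-\log\epsilon<\infty$.

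Next I would apply It\^o's formula to $C(1-t,X_t,Y_t)$, which is legitimate for $t<1$ by the analyticity established in Lemma \ref{lem:anal}. Because $\langle W^i,W^j\rangle_t=\delta_{ij}t$, $\langle B^i,B^j\rangle_t=\delta_{ij}t$, and $\langle W^i,B^j\rangle_t=\delta_{ij}\rho t$, the second-order term produced by It\^o's formula is precisely $\tfrac12\Delta_\rho C$; combining with the drift term $b\cdot\nabla C$ and the $t$-derivative (with a sign flip from the time reversal $\tau=1-t$) yields
\[
dC(1-t,X_t,Y_t) = \bigl(-\partial_\tau C+b\cdot\nabla C+\tfrac12\Delta_\rho C\bigr)(1-t,X_t,Y_t)\,dt + dN_t,
\]
where $N_t$ is a local martingale. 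By Lemma \ref{lem:para} the bracketed drift equals $\tfrac12\|\nabla u_h(1-t,\lambda X_t+\mu Y_t)\|^2\,C(1-t,X_t,Y_t)$. Since $\mathcal{E}_t$ has finite variation with $d\mathcal{E}_t=-\tfrac12\|\nabla u_h\|^2\mathcal{E}_t\,dt$, the ordinary product rule gives $dM_t=\mathcal{E}_t\,dN_t$, so $M_t$ is a local martingale on $[0,1)$.

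Finally I would upgrade $M$ to a true martingale. Stopping at $\tau_n:=\inf\{t:\|X_t\|+\|Y_t\|\ge n\}$, each $M_{t\wedge\tau_n}$ is a genuine martingale, so $\mathbf{E}[M_{t\wedge\tau_n}]=C(1,0,0)$. Using $0<\mathcal{E}_t\le 1$ together with the Lipschitz bound
\[
|C(s,x,y)|\le|C(s,0,0)|+K(1-\text{reversed})^{-1/2}(\|x\|+\|y\|),
\]
and the uniform $L^2$ bound on $(X_s,Y_s)$ proved above, the family $\{M_{t\wedge\tau_n}\}_n$ is uniformly integrable for each fixed $t<1$, and dominated convergence gives $\mathbf{E}[M_t]=C(1,0,0)$. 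Under the stronger hypotheses of the second part of the lemma, $Q_sf,Q_sg,Q_sh$ stay in $[\delta,1-\delta]$ and have bounded first and second derivatives uniformly in $s\in[0,1]$, so $b$ is globally Lipschitz and $\nabla u_h$ is globally bounded on $[0,1]\times\mathbb{R}^n$; the SDE then admits a unique strong solution on the closed interval $[0,1]$, $M_t$ is bounded, and the identity extends to $t=1$ by continuity. The main obstacle is the last step: controlling integrability as $t\uparrow 1$ in the general case, where both the Lipschitz constant of $u_f$ and the function $C$ may be unbounded; the point is that $\mathcal{E}_t\le 1$ and the singular Lipschitz constant $(1-s)^{-1/2}$ has a merely logarithmically singular integral, which is exactly enough to close the moment estimates on $[0,1)$.
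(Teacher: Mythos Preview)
Your argument is correct and follows essentially the same route as the paper. The paper establishes the same local Lipschitz and linear growth properties of $b$ (via Lemma~\ref{lem:anal}) to get existence and uniqueness on $[0,1)$, and then simply invokes the Feynman--Kac theorem \cite[Theorem~5.7.6]{KS88} as a black box; you instead unpack that black box by applying It\^o's formula directly and verifying the martingale property, which is precisely how Feynman--Kac is proved. One small point: in your Lipschitz bound for $C$ you wrote ``$K(1-\text{reversed})^{-1/2}$'', which should read $Ks^{-1/2}$ (becoming $K(1-t)^{-1/2}$ after the time reversal); with that corrected, your uniform integrability argument goes through.
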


\begin{proof}
Let $t\in[0,1)$. By the elementary properties of the heat semigroup, the 
function $(s,x,y)\mapsto b(1-s,x,y)$ is smooth on $[0,t]\times 
\mathbb{R}^n \times \mathbb{R}^n$, and is therefore locally Lipschitz.
Moreover, note that $b$ satisfies the linear growth property
\begin{align*}
	\sup_{s\le t}\|b(1-s,x,y)\| &\le (1-t)^{-1/2}
	\sup_{s\le t}\{|u_f(1-s,x)|+|u_g(1-s,y)|\}
	\\ &\le
	(1-t)^{-1}\bigg\{\sup_{s\le t}(|u_f(1-s,0)|+
	|u_g(1-s,0)|)+\|x\|+\|y\|\bigg\},
\end{align*}
where we have used twice the Lipschitz property of Lemma \ref{lem:anal}.
Thus existence and uniqueness of the solution $(X_t,Y_t)$ for $t\in[0,1)$
follows from a standard result \cite[Theorem V.12.1]{RW94}. The stochastic
representation of $C(1,0,0)$ now follows from the Feynman-Kac theorem
\cite[Theorem 5.7.6]{KS88}. Under the additional regularity assumptions
stated at the end of the lemma, it is readily verified that $b$ is
globally Lipschitz uniformly on $t\in[0,1]$, and thus both existence
and uniqueness and the Feynman-Kac representation extend to the entire
interval $t\in[0,1]$.
\end{proof}

The parabolic equation for the function $C$ and its probabilistic 
representation form the starting point for the methods developed in this 
paper. To conclude this section, let us briefly sketch how this 
construction gives rise to the Ehrhard-Borell inequality, and outline 
the basic ideas behind the proof of the equality cases.

With the above representation in hand, the Ehrhard-Borell inequality
becomes nearly trivial modulo a technical approximation argument. Suppose that
the functions $f,g,h$ satisfy the stronger regularity assumptions
stated in Lemma \ref{lem:fk}, and that the assumption of the Ehrhard-Borell
inequality
$$
	C(0,x,y) =
	\Phi^{-1}(h(\lambda x+\mu y)) - 
	\lambda\Phi^{-1}(f(x)) -
	\mu\Phi^{-1}(g(y)) \ge 0
$$
holds. Then the conclusion 
\begin{align*}
	&\Phi^{-1}\bigg(\int h\, d\gamma_n\bigg) -
	\lambda\Phi^{-1}\bigg(\int f\, d\gamma_n\bigg) -
	\mu\Phi^{-1}\bigg(\int g\, d\gamma_n\bigg) 
	\\ &
	=\mathbf{E}\bigg[
	C(0,X_1,Y_1)\,
	e^{-\frac{1}{2}\int_0^1\|\nabla u_h(1-s,\lambda
	X_s+\mu Y_s)\|^2ds}
	\bigg] \ge 0
\end{align*}
follows trivially from the probabilistic representation of
Lemma \ref{lem:fk} for $t=1$. The extension of the conclusion to
general $f,g,h$ is then accomplished by an approximation argument
that is unrelated to the methods of this paper, and which we therefore
omit; see \cite[Remark 3.4]{vH17} and the references therein.

This simple argument not only proves the Ehrhard-Borell inequality, but 
also suggests a potential approach to its equality cases.
In order to highlight the main ideas, let us ignore regularity issues
for now (they will be addressed in the proof).
Assume that $f,g,h$ are sufficiently regular 
that the representation of Lemma \ref{lem:fk} holds for all $t\in[0,1]$, 
and that the assumption $C(0,x,y)\ge 0$ of the Ehrhard-Borell inequality 
holds. Suppose now that equality holds in the Ehrhard-Borell inequality
$$
	\Phi^{-1}\bigg(\int h\, d\gamma_n\bigg) -
	\lambda\Phi^{-1}\bigg(\int f\, d\gamma_n\bigg) -
	\mu\Phi^{-1}\bigg(\int g\, d\gamma_n\bigg) = 0.
$$
Then the probabilistic representation of Lemma \ref{lem:fk} immediately
implies that
$$
	C(0,X_1,Y_1) = 0 \quad\mbox{a.s.}
$$
It would appear at first sight that we are almost done, as it is not 
difficult to show (as is done in section \ref{sec:basedeg} below) 
that $C(0,x,y)=0$ implies that \eqref{eq:H1} in Theorem \ref{thm:main} 
holds. However, this conclusion is not correct, as \eqref{eq:H1} is not 
the only equality case of the Ehrhard-Borell inequality even when 
restricted to regular $f,g,h$.

The problem with the above reasoning is that it is not true that 
$C(0,X_1,Y_1)=0$ a.s.\ implies $C(0,x,y)=0$ for all $x,y$, as the law of 
the random vector $(X_1,Y_1)$ need not be supported on all of 
$\mathbb{R}^n\times\mathbb{R}^n$. Indeed, we will see that the other 
equality cases in the Ehrhard-Borell inequality can arise precisely 
when the support of $(X_1,Y_1)$ becomes degenerate. To illustrate this 
phenomenon, it is instructive to consider, for example, the equality 
case where $\lambda=1-\mu$ and $f=g=h$. In this case, $\rho=1$ so that 
the Brownian motions in Lemma \ref{lem:fk} are perfectly correlated 
$W_t=B_t$. Using this fact and the explicit expression for the vector 
field $b$ given in Lemma \ref{lem:para}, it is readily verified that if 
$X_t=Y_t$ and $X_t$ is the solution of the equation
$$
	dX_t = -u_f(1-t,X_t)\nabla u_f(1-t,X_t)\,dt
	+ dW_t,\qquad X_0=0,
$$
then $(X_t,Y_t)$ solves the equation in Lemma 
\ref{lem:fk}. In particular, in this case, $(X_1,Y_1)$ is supported on 
the diagonal $\{(x,x):x\in\mathbb{R}^n\}\subset 
\mathbb{R}^n\times\mathbb{R}^n$, so we could conclude at best that 
$C(0,x,x)=0$ and \emph{not} that $C(0,x,y)=0$. In this way, the 
degeneracy of the support makes it possible for equality cases other 
than \eqref{eq:H1} to appear.

Motivated by these observations, our analysis of the equality cases in 
the Ehrhard-Borell inequality will involve an analysis of the support of 
the solution $(X_t,Y_t)$ of the equation in Lemma \ref{lem:fk}. The 
support of the solution of a stochastic differential equation is 
characterized in principle by the classical support theorem of Stroock 
and Varadhan \cite{SV72} in terms of the solutions of certain ordinary 
differential equations. However, the differential equation that appears 
in Lemma \ref{lem:fk} is not easy to analyze directly, as its drift 
vector field $b$ is itself defined in terms of the functions 
$u_f,u_g,u_h$ that we are attempting to analyze in the first place. 
Moreover, even if we could characterize the support of the equation, it 
is not immediately clear how that would give rise to specific equality 
cases other than \eqref{eq:H1}.

Instead, our proof will indirectly utilize the dichotomy between 
nondegenerate and degenerate support to characterize the different 
equality cases. Roughly speaking, our argument will work as follows. If 
$(X_t,Y_t)$ has nondegenerate support, we will be able to argue as above 
to conclude the equality case \eqref{eq:H1}. Conversely, if $(X_t,Y_t)$ 
has degenerate support, this imposes strong constraints on the 
underlying differential equation: its vector field must always lie in 
the tangent space of a lower-dimensional submanifold. Using methods 
borrowed from nonlinear control theory \cite{NvdS16}, we can translate 
this geometric constraint into an algebraic identity for the vector 
field $b$ which provides the key to the analysis of the remaining 
equality cases. This basic outline of the proof, together with the 
modifications needed to circumvent the regularity issues, will be 
developed in detail in the following sections.

\begin{rem}
In the introduction (section \ref{sec:intro}), we explained the 
Ehrhard-Borell inequality as arising from the weak parabolic maximum 
principle, while we have exploited in this section an alternative 
probabilistic argument. These two approaches are not really distinct, 
however: in essence, the argument given above is little more than a 
probabilistic proof of the weak parabolic maximum principle by means of 
the Feynman-Kac formula (it is a simple exercise to verify that the same 
argument recovers the general form of the maximum principle given, for 
example, in \cite{Ev97}). Similarly, we can view the statement that 
$C(1,0,0)=0$ implies $C(t,x,y)=0$ for all $(x,y)$ in the support of the 
law of $(X_{1-t},Y_{1-t})$ as a probabilistic form of the strong 
parabolic maximum principle. The probabilistic viewpoint proves to be
particularly convenient for our analysis and will be adopted in the rest
of the paper.
\end{rem}

\section{The nondegenerate case}
\label{sec:nondeg}

As was explained in the previous section, the dichotomy between 
nondegenerate and degenerate support of the process $(X_t,Y_t)$ forms 
the basis of the different equality cases of the Ehrhard-Borell 
inequality. Cases of degenerate support can only arise, however, when 
the parabolic equation of Lemma \ref{lem:para} is degenerate, that is, 
$\rho=1$ or $\rho=-1$. In nondegenerate cases $-1<\rho<1$, the 
Laplacian $\Delta_\rho$ is uniformly elliptic and it is well understood 
that this implies that the law of $(X_t,Y_t)$ has a positive density 
with respect to Lebesgue measure (we include a short proof below for 
completeness). Thus, following the logic of the previous section, we 
expect in this case to obtain only the simple equality case 
\eqref{eq:H1} and its limiting case \eqref{eq:H2} when we admit 
non-smooth $f,g,h$. This is indeed precisely what happens.

Thus our first goal will be to settle the nondegenerate equality cases, 
which is the aim of this section. While this case is conceptually much 
simpler than the degenerate equality cases that will be studied in the 
next section, we must still address the regularity issues that we have 
ignored in our discussion so far. For concreteness, let us formulate the 
main result to be proved in this section.

\begin{prop}
\label{prop:nondeg}
Let $\lambda\ge\mu>0$ satisfy $\lambda+\mu>1$ and $\lambda-\mu<1$.
Let $f,g,h:\mathbb{R}^n\to[0,1]$ be nontrivial measurable
functions satisfying 
$$
	\Phi^{-1}(h(\lambda x+\mu y))
	\gea \lambda \Phi^{-1}(f(x)) + \mu \Phi^{-1}(g(y)).
$$
If equality holds in the Ehrhard-Borell inequality
$$
	\Phi^{-1}\bigg(\int h\, d\gamma_n\bigg) =
	\lambda\Phi^{-1}\bigg(\int f\, d\gamma_n\bigg) +
	\mu\Phi^{-1}\bigg(\int g\, d\gamma_n\bigg),
$$
then \textbf{either}
$$
	h(x) \eqa \Phi(\langle a,x\rangle + \lambda b+\mu c),\quad
	f(x) \eqa \Phi(\langle a,x\rangle + b),\quad
	g(x) \eqa \Phi(\langle a,x\rangle + c),
$$
\textbf{or}
$$
	h(x) \eqa 1_{\langle a,x\rangle + \lambda b+\mu c\ge 0},\quad
	f(x) \eqa 1_{\langle a,x\rangle + b\ge 0},\quad
	g(x) \eqa 1_{\langle a,x\rangle + c\ge 0},
$$
for some $a\in\mathbb{R}^n$ and $b,c\in\mathbb{R}$.
\end{prop}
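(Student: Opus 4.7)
The plan is to follow the outline at the end of Section~\ref{sec:borell}: use the Feynman-Kac representation of $C(1,0,0)$ from Lemma~\ref{lem:fk} to convert equality in the Ehrhard-Borell inequality into the vanishing of $C$ along the paths of the process $(X_t,Y_t)$, then exploit the nondegeneracy $-1<\rho<1$ to argue that these paths visit everything, so that $C$ vanishes identically at positive times. From there, the structure of $f,g,h$ can be read off by elementary analysis of the resulting functional identity.

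First I would establish the pointwise bound $C(s,x,y)\ge 0$ for all $s>0$ and all $x,y\in\mathbb{R}^n$. This follows by applying Theorem~\ref{thm:be} to the translated functions $z\mapsto f(x+\sqrt{s}\,z)$, $z\mapsto g(y+\sqrt{s}\,z)$, $z\mapsto h(\lambda x+\mu y+\sqrt{s}\,z)$, which inherit the a.e.\ form of \eqref{eq:B} from the hypothesis by a change of variables. Combining the equality assumption $C(1,0,0)=0$ with the Feynman-Kac identity and the strict positivity of the exponential weight then forces $C(1-t,X_t,Y_t)=0$ almost surely for each $t\in[0,1)$. The crux of the nondegenerate case is to show that the law of $(X_t,Y_t)$ has a strictly positive density on $\mathbb{R}^{n}\times\mathbb{R}^{n}$. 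Since $-1<\rho<1$, the joint covariance of $(W_t,B_t)$ is nonsingular and the SDE of Lemma~\ref{lem:fk} has a constant, nondegenerate diffusion coefficient; on $[0,t]$ for any fixed $t<1$ the drift $b(1-\cdot,\cdot,\cdot)$ is globally Lipschitz by Lemma~\ref{lem:anal} and the growth estimate in the proof of Lemma~\ref{lem:fk}, so Girsanov's theorem gives mutual absolute continuity of the law of $(X_t,Y_t)$ with a nondegenerate Gaussian, hence a positive density. Combined with the real analyticity of $(x,y)\mapsto C(1-t,x,y)$ from Lemma~\ref{lem:anal}, this upgrades the almost sure identity to $C(1-t,x,y)\equiv 0$.

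For every $s\in(0,1)$ we therefore have the pointwise identity $u_h(s,\lambda x+\mu y)=\lambda u_f(s,x)+\mu u_g(s,y)$. Differentiating in $x$ and separately in $y$, and using that $\lambda,\mu>0$ so that $\lambda x+\mu y$ sweeps out $\mathbb{R}^n$ when one variable moves with the other fixed, forces $\nabla u_f(s,\cdot)$, $\nabla u_g(s,\cdot)$, $\nabla u_h(s,\cdot)$ to coincide with a single constant vector $a(s)$. Hence each of $u_f,u_g,u_h$ is affine in the spatial variable with common linear part $a(s)$ and with constants satisfying $c_h(s)=\lambda c_f(s)+\mu c_g(s)$.

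To recover $f$ itself, I would fix $s_0\in(0,1)$, and use the semigroup identity $Q_{s_0}f=Q_{s_0-s'}Q_{s'}f$ together with the direct computations
\[
	Q_s[\Phi(\langle a,\cdot\rangle+b)](x)=\Phi\!\left(\tfrac{\langle a,x\rangle+b}{\sqrt{1+s\|a\|^2}}\right),\qquad
	Q_s[1_{\langle a,\cdot\rangle+b\ge 0}](x)=\Phi\!\left(\tfrac{\langle a,x\rangle+b}{\sqrt{s}\,\|a\|}\right),
\]
to see that the requirement $Q_s f(x)=\Phi(\langle a(s),x\rangle+c_f(s))$ for all $s\in(0,1)$ forces $\|a(s)\|^{-2}=s+K$ for some constant $K\ge 0$ (positivity being needed for $s\downarrow 0$). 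The case $K>0$, by injectivity of $Q_{s_0}$ on $L^\infty$, gives $f\eqa\Phi(\langle a,\cdot\rangle+b)$ with $\|a\|^2=1/K$; the case $K=0$ gives $f\eqa 1_{\langle a,\cdot\rangle+b\ge 0}$. The value of $K$ is determined by $\|a(s_0)\|$, which is common to $f,g,h$, so all three sit in the same case, and the common direction $a$ together with $c_h=\lambda c_f+\mu c_g$ yields the precise form stated in the proposition. I expect the main technical obstacle to be the positive-density claim for $(X_t,Y_t)$: the drift $b(1-s,\cdot,\cdot)$ has a Lipschitz constant that blows up as $s\uparrow 1$, so one must restrict attention to an intermediate time $t\in(0,1)$ where a clean Girsanov (or Stroock-Varadhan support) argument applies; the remaining steps are then essentially algebraic.
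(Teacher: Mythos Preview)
Your proposal is correct and follows essentially the same route as the paper: nonnegativity of $C$ via Theorem~\ref{thm:be} on translated functions, Feynman--Kac plus Girsanov (valid since $-1<\rho<1$ and $b$ has linear growth on $[0,t]$ for $t<1$) to force $C(1-t,\cdot,\cdot)\equiv 0$, then differentiation to extract the affine structure, and finally inversion of the heat semigroup. The only cosmetic difference is in the last step, where the paper packages the inversion as a single-time injectivity statement (Lemma~\ref{lem:invert}) while you track the evolution $\|a(s)\|^{-2}=s+K$ through the semigroup; the paper in fact remarks that your limiting viewpoint is an equally valid alternative in this nondegenerate setting.
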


The remainder of this section is devoted to the proof of this result. 
The assumptions of Proposition \ref{prop:nondeg} will be assumed to be 
in force throughout this section.

In the discussion of the previous section, we assumed for simplicity 
that $f,g,h$ are regular. To obtain the general equality cases, however, we 
cannot make this assumption. When we assumed no regularity on $f,g,h$, 
we cannot directly apply the representation of Lemma 
\ref{lem:fk} for $t=1$, and we must work with $t\in[0,1)$ only.
Let us begin by recalling a classic fact about 
nondegenerate diffusions.

\begin{lem}
\label{lem:girs}
In the present setting, the law of $(X_t,Y_t)$ has a positive density with 
respect to the Lebesgue measure on $\mathbb{R}^n\times\mathbb{R}^n$ for 
every $t\in(0,1)$.
\end{lem}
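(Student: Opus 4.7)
The plan is to apply Girsanov's theorem to remove the drift from the stochastic differential equation in Lemma \ref{lem:fk}, reducing the law of $Z_t := (X_t, Y_t)$ to that of a nondegenerate Gaussian, whose density on $\mathbb{R}^{2n}$ is strictly positive.

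Since the hypotheses $\lambda + \mu > 1$ and $\lambda - \mu < 1$ translate to $-1 < \rho < 1$, the covariance matrix $\Sigma$ of the driving noise $(W_t, B_t)$, which has $I_n$ on the diagonal blocks and $\rho I_n$ off-diagonal, is strictly positive definite. Writing $(W_t, B_t) = \Sigma^{1/2}\tilde W_t$ with $\tilde W_t$ a standard $2n$-dimensional Brownian motion, the SDE becomes $dZ_s = b(1-s, Z_s)\,ds + \Sigma^{1/2}\,d\tilde W_s$ with $Z_0 = 0$. Fix $t \in (0,1)$ and set $\theta_s := -\Sigma^{-1/2} b(1-s, Z_s)$ on $[0,t]$. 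Assuming the stochastic exponential $\mathcal{E}_t := \exp(\int_0^t \langle \theta_s, d\tilde W_s\rangle - \frac{1}{2}\int_0^t \|\theta_s\|^2\,ds)$ is a true $\mathbf{P}$-martingale, Girsanov's theorem produces a probability measure $\mathbf{Q}$ on $\mathcal{F}_t$, equivalent to $\mathbf{P}$, under which $Z$ is a centered $\Sigma^{1/2}$-Brownian motion. Thus $Z_t$ is $\mathbf{Q}$-distributed as $\mathcal{N}(0, t\Sigma)$, whose Gaussian density is strictly positive on $\mathbb{R}^{2n}$. Mutual absolute continuity of $\mathbf{P}$ and $\mathbf{Q}$ transfers this positivity back to the law of $Z_t$ under $\mathbf{P}$, as desired.

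The only nontrivial step is the verification that $\mathcal{E}_t$ is a true martingale. From the linear growth bound recorded in the proof of Lemma \ref{lem:fk}, $\|b(1-s, Z_s)\| \le K(1 + \|Z_s\|)$ uniformly on $s \in [0, t]$ for a constant $K$ depending on $t < 1$, so $\|\theta_s\|^2 \le K'(1 + \|Z_s\|^2)$. Standard exponential moment estimates for SDEs with linear-growth coefficients on a compact interval give $\sup_{s \le t}\mathbf{E}[\exp(c \|Z_s\|^2)] < \infty$ for some $c > 0$, from which Novikov's condition $\mathbf{E}[\exp(\tfrac{1}{2}\int_0^t \|\theta_s\|^2\,ds)] < \infty$ follows. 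I expect this verification to be the main (though still routine) obstacle, but it reduces entirely to classical SDE estimates on the compact interval $[0, t]$ with $t < 1$; once it is handled, the equivalence of measures argument above delivers the positive density immediately.
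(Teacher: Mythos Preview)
Your proposal is correct and follows essentially the same Girsanov argument as the paper: factor the noise as $\Sigma^{1/2}\tilde W$, remove the drift by an equivalent change of measure, and conclude from nondegeneracy of $\Sigma$ when $-1<\rho<1$. The only difference is in the verification that the stochastic exponential is a true martingale---the paper simply invokes \cite[Corollary~3.5.16]{KS88}, which gives this directly from the linear growth of $b$ established in Lemma~\ref{lem:fk}, whereas you sketch a Novikov-type argument (which works, though the step from $\sup_s\mathbf{E}[e^{c\|Z_s\|^2}]<\infty$ to the full Novikov condition is cleanest via the sliced version on short subintervals).
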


\begin{proof}
Let $Z_t$ be standard Brownian motion on $\mathbb{R}^{2n}$ and define
$$
	\Sigma := \left[
	\begin{matrix} I_n & \rho I_n \\ \rho I_n & I_n
	\end{matrix}
	\right],\qquad\quad
	\left[
	\begin{matrix}
	W_t\\
	B_t
	\end{matrix}
	\right] = 
	\Sigma^{1/2}Z_t,
$$
where $I_n$ denotes the $n\times n$ identity matrix.
Then $W_t,B_t$ are $\rho$-correlated $n$-dimensional standard
Brownian motions as defined in Lemma \ref{lem:fk}.

The assumptions $\lambda+\mu>1$ and $\lambda-\mu<1$ or, equivalently,
$-1<\rho<1$, imply that $\Sigma$ is nonsingular.
Let $\mathbf{P}_t$ be the law of $(X_s,Y_s)_{s\in[0,t]}$,
and define $\mathbf{Q}_t$ by
$$
	\frac{d\mathbf{Q}_t}{d\mathbf{P}_t} := 
	\exp\bigg(-\int_0^t \langle\Sigma^{-1/2}b(1-s,X_s,Y_s),dZ_s\rangle
	- \frac{1}{2}\int_0^t 
	\|\Sigma^{-1/2}b(1-s,X_s,Y_s)\|^2 ds\bigg).
$$
Then Girsanov's theorem \cite[Theorem 3.5.1]{KS88} states that the 
process $(X_s,Y_s)_{s\in[0,t]}$ has the same distribution under 
$\mathbf{Q}_t$ as does $(W_s,B_s)_{s\in[0,t]}$ under $\mathbf{P}_t$ (we 
showed in the proof of Lemma \ref{lem:fk} that the function $b$ has 
linear growth, so the assumption of Girsanov's theorem is satisfied by 
\cite[Corollary 3.5.16]{KS88}). It follows that $(X_t,Y_t)$ has a 
positive density with respect to a nondegenerate Gaussian measure on 
$\mathbb{R}^n\times\mathbb{R}^n$, and therefore with respect to Lebesgue 
measure, for every $t\in(0,1)$.
\end{proof}

Combining this observation with Lemma \ref{lem:fk}, we obtain the following.

\begin{cor}
\label{cor:ndeq}
In the present setting, if equality holds in the Ehrhard-Borell inequality,
then $C(t,x,y)=0$ for all $t\in(0,1)$ and $x,y\in\mathbb{R}^n$, that is,
$$
	\Phi^{-1}(Q_th(\lambda x+\mu y)) =
	\lambda\Phi^{-1}(Q_tf(x)) + \mu\Phi^{-1}(Q_tg(y)).
$$
\end{cor}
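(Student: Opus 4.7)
The plan is to combine Lemmas \ref{lem:fk}, \ref{lem:girs}, and \ref{lem:anal} with a preliminary non-negativity statement for $C$. The skeleton is: by hypothesis $C(1,0,0)=0$; the Feynman--Kac representation expresses this as the expectation of $C(1-t,X_t,Y_t)$ against a strictly positive weight; non-negativity of $C$ then forces the integrand to vanish a.s.; the positive-density property of the law of $(X_t,Y_t)$ spreads this to Lebesgue-a.e.\ vanishing of $C(1-t,\cdot,\cdot)$; and analyticity upgrades a.e.\ vanishing to vanishing everywhere.

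First I establish the preliminary fact that $C(s,x,y)\ge 0$ for every $s>0$ and every $x,y\in\mathbb{R}^n$. This is not automatic since no smoothness of $f,g,h$ has been assumed, but it follows from applying Theorem \ref{thm:be} itself to the affinely rescaled functions
$$F(z):=f(x+\sqrt{s}\,z),\qquad G(z):=g(y+\sqrt{s}\,z),\qquad H(z):=h(\lambda x+\mu y+\sqrt{s}\,z).$$
The affine change of variables transports the a.e.\ inequality \eqref{eq:B} for $f,g,h$ to the corresponding a.e.\ inequality for $F,G,H$, and since $\int F\,d\gamma_n=Q_sf(x)$, $\int G\,d\gamma_n=Q_sg(y)$, $\int H\,d\gamma_n=Q_sh(\lambda x+\mu y)$, the conclusion of Theorem \ref{thm:be} applied to $F,G,H$ is precisely $C(s,x,y)\ge 0$.

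With non-negativity in hand, fix any $t\in(0,1)$ and apply Lemma \ref{lem:fk}:
$$0=C(1,0,0)=\mathbf{E}\bigl[C(1-t,X_t,Y_t)\,e^{-\tfrac12\int_0^t\|\nabla u_h(1-s,\lambda X_s+\mu Y_s)\|^2\,ds}\bigr].$$
The exponential weight is strictly positive almost surely, and $C(1-t,X_t,Y_t)\ge 0$ by the previous paragraph, so the integrand must vanish a.s. Lemma \ref{lem:girs} supplies a positive Lebesgue density for the law of $(X_t,Y_t)$ on $\mathbb{R}^n\times\mathbb{R}^n$, so $C(1-t,\cdot,\cdot)=0$ Lebesgue-a.e. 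Lemma \ref{lem:anal} makes $(x,y)\mapsto C(1-t,x,y)$ real analytic, in particular continuous, so the zero set is closed of full measure, hence all of $\mathbb{R}^{2n}$. Letting $t$ range over $(0,1)$ finishes the proof.

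I do not expect a genuine obstacle in this argument; the three lemmas do essentially all the work. The one point meriting care is the preliminary non-negativity of $C$, handled by the self-application of Theorem \ref{thm:be} above. The strict non-degeneracy hypotheses $\lambda+\mu>1$ and $\lambda-\mu<1$ of Proposition \ref{prop:nondeg} (equivalently $-1<\rho<1$) are used exactly once, in Lemma \ref{lem:girs}, to guarantee full support of $(X_t,Y_t)$; in the borderline cases $\rho=\pm 1$ the support degenerates, which is precisely where the additional equality cases of Theorem \ref{thm:main} emerge.
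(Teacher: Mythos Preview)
Your proof is correct and follows essentially the same approach as the paper: both establish $C\ge 0$ by applying Theorem~\ref{thm:be} to the affinely rescaled functions, then combine Lemma~\ref{lem:fk}, Lemma~\ref{lem:girs}, and Lemma~\ref{lem:anal} in the same order to pass from $C(1,0,0)=0$ to $C(1-t,\cdot,\cdot)\equiv 0$. Your write-up is slightly more explicit about the role of the strictly positive exponential weight in the Feynman--Kac representation, but the argument is otherwise identical.
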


\begin{proof}
As $f,g,h$ satisfy the assumption of the Ehrhard-Borell inequality, so do
the functions
$\tilde f(z) := f(x+\sqrt{t}z)$,
$\tilde g(z) := g(y+\sqrt{t}z)$,
$\tilde h(z) := h(\lambda x+\mu y+\sqrt{t}z)$.
Applying the Ehrhard-Borell inequality (Theorem \ref{thm:be}) to the
latter functions shows that
$C(1-t,x,y)\ge 0$ for every $t\in(0,1)$, $x,y\in\mathbb{R}^n$. As we also
assumed that equality holds in the Ehrhard-Borell inequality for the functions
$f,g,h$, it follows from Lemma \ref{lem:fk} that $C(1-t,X_t,Y_t) = 0$ a.s.\ 
for every $t\in(0,1)$. In particular, Lemma \ref{lem:girs} implies that
$C(1-t,\cdot,\cdot)\eqa 0$. But Lemma \ref{lem:anal} implies that
$C(1-t,\cdot,\cdot)$ is continuous, so we can eliminate the null set to
obtain the conclusion.
\end{proof}

We now show that any three regular functions that satisfy the identity 
in Corollary \ref{cor:ndeq} must be linear. This explains the appearance 
of the equality case \eqref{eq:H1}.

\begin{lem}
\label{lem:eqcalc}
Let $u,v,w:\mathbb{R}^n\to\mathbb{R}$ be smooth functions such that
$$
	w(\lambda x+\mu y)=\lambda u(x)+\mu v(y)
$$
for all $x,y\in\mathbb{R}^n$. Then
$$
	w(x) = \langle a,x\rangle + \lambda b+\mu c,\qquad
	u(x) = \langle a,x\rangle + b,\qquad
	v(x) = \langle a,x\rangle + c
$$
for some $a\in\mathbb{R}^n$ and $b,c\in\mathbb{R}$.
\end{lem}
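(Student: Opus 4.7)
The plan is to extract linearity by differentiating the functional equation separately in each argument. First I would fix $y$ and differentiate the identity $w(\lambda x + \mu y) = \lambda u(x) + \mu v(y)$ with respect to $x$, obtaining $\lambda \nabla w(\lambda x + \mu y) = \lambda \nabla u(x)$, hence $\nabla w(\lambda x + \mu y) = \nabla u(x)$. Similarly, fixing $x$ and differentiating in $y$ yields $\nabla w(\lambda x + \mu y) = \nabla v(y)$.

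Combining these identities gives $\nabla u(x) = \nabla v(y)$ for every $x, y \in \mathbb{R}^n$. Varying $x$ with $y$ held fixed (and vice versa) forces both $\nabla u$ and $\nabla v$ to be constant; call their common value $a \in \mathbb{R}^n$. Since $\lambda, \mu > 0$, the map $(x,y) \mapsto \lambda x + \mu y$ is surjective onto $\mathbb{R}^n$, so $\nabla w \equiv a$ as well. Consequently $u(x) = \langle a, x\rangle + b$, $v(x) = \langle a, x\rangle + c$, and $w(x) = \langle a, x\rangle + d$ for some scalars $b, c, d$.

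Finally, I would substitute these affine forms back into the original functional equation: $\langle a, \lambda x + \mu y\rangle + d = \lambda(\langle a, x\rangle + b) + \mu(\langle a, y\rangle + c)$ reduces to $d = \lambda b + \mu c$, completing the identification.

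There is essentially no obstacle here; the argument is a two-line exercise in the chain rule once one observes that differentiating in $x$ only sees $u$ and differentiating in $y$ only sees $v$, forcing both gradients to agree and be constant. The smoothness hypothesis provides more regularity than needed — $C^1$ suffices — and the positivity of $\lambda, \mu$ is used only to ensure surjectivity of the convex combination map.
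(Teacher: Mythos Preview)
Your proposal is correct and essentially identical to the paper's own proof: both differentiate the functional equation in $x$ and in $y$ to obtain $\nabla w(\lambda x+\mu y)=\nabla u(x)=\nabla v(y)$, conclude that all three gradients are constant, and then substitute the resulting affine forms back to identify $d=\lambda b+\mu c$. Your remark about surjectivity of $(x,y)\mapsto\lambda x+\mu y$ makes explicit a step the paper leaves implicit, but otherwise the arguments coincide.
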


\begin{proof}
Differentiating the assumption with respect to $x_i$ or $y_i$ yields
$$
	\nabla w(\lambda x+\mu y) =
	\nabla u(x) = \nabla v(y)
$$
for all $x,y$. It follows that $\nabla w(x)=a$ must be a constant function,
and thus $\nabla u(x)=\nabla v(x)=a$ must equal the same constant.
This readily implies that
$$
	u(x) = \langle a,x\rangle + b,\qquad
	v(x) = \langle a,x\rangle + c,\qquad
	w(x) = \langle a,x\rangle + d,
$$
and plugging these forms into the assumption shows that
$d=\lambda b+\mu c$.
\end{proof}

Now recall that the functions $u_f,u_g,u_h$ are smooth for $t>0$ by 
Lemma \ref{lem:anal}. Combining Corollary \ref{cor:ndeq} and Lemma 
\ref{lem:eqcalc}, we have therefore shown that in the present setting, 
equality in the Ehrhard-Borell inequality implies that for every 
$t\in(0,1)$, there exist $a\in\mathbb{R}^n$ and $b,c\in\mathbb{R}$ such 
that
$$
	Q_th(x) = \Phi(\langle a,x\rangle+\lambda b+\mu c),\quad
	Q_tf(x) = \Phi(\langle a,x\rangle+b),\quad
	Q_tg(x) = \Phi(\langle a,x\rangle+c).
$$
To complete the proof of Proposition \ref{prop:nondeg}, it remains to 
invert the heat semigroup $Q_t$ to deduce the characterization of 
$f,g,h$. Fortunately, it is possible to do so as the heat semigroup 
$f\mapsto Q_tf$ is injective. This idea is made precise by the following 
lemma; an analogous argument can be found in the proof of 
\cite[Theorem 1]{CK01}.

\begin{lem}
\label{lem:invert}
Let $f:\mathbb{R}^n\to[0,1]$ be a measurable function such that
$Q_tf(x)=\Phi(\langle a,x\rangle+b)$ for some $t>0$, $a\in\mathbb{R}^n$, 
and $b\in\mathbb{R}$. Then $\|a\|\le t^{-1/2}$, and
$$
	f(x) \eqa \begin{cases}
	1_{\langle a,x\rangle + b\ge 0} & \mbox{if }\|a\|=t^{-1/2},\\
	\Phi\bigg(
	\frac{\langle a,x\rangle+b}{\sqrt{1-t\|a\|^2}}\bigg)
	& \mbox{if }\|a\|<t^{-1/2}.	
	\end{cases}
$$
\end{lem}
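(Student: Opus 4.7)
The plan is to combine the Lipschitz bound from Lemma \ref{lem:anal} with injectivity of the heat semigroup on $L^\infty$. The argument breaks into three short steps: a norm bound, a candidate verification, and an inversion.

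First I would deduce the norm bound $\|a\|\le t^{-1/2}$ for free from Lemma \ref{lem:anal}. The function $u_f(t,x) = \Phi^{-1}(Q_tf(x)) = \langle a,x\rangle + b$ is affine with Lipschitz constant exactly $\|a\|$, while Lemma \ref{lem:anal} asserts this constant is at most $t^{-1/2}$.

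Next I would verify by a short Gaussian calculation that the function $f_0$ displayed in the conclusion satisfies $Q_tf_0(x) = \Phi(\langle a,x\rangle + b)$. In the strict case $\|a\|<t^{-1/2}$, set $f_0(x) = \Phi(\langle c,x\rangle + d)$ with $c = a/\sqrt{1-t\|a\|^2}$ and $d = b/\sqrt{1-t\|a\|^2}$, and apply the classical identity $\E[\Phi(\alpha + \sigma W)] = \Phi(\alpha/\sqrt{1+\sigma^2})$ for $W\sim N(0,1)$, using $1+t\|c\|^2 = 1/(1-t\|a\|^2)$. In the boundary case $\|a\| = t^{-1/2}$, set $f_0 = 1_{\langle a,\cdot\rangle+b\ge 0}$; a direct computation gives $Q_tf_0(x) = \Phi((\langle a,x\rangle+b)/(\sqrt{t}\,\|a\|))$, and the factor $\sqrt{t}\,\|a\|$ equals $1$.

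Finally I would close the loop by invoking injectivity of $Q_t$ on $L^\infty(\mathbb{R}^n)$. Setting $h := f - f_0$, both $f$ and $f_0$ are $[0,1]$-valued, so $h\in L^\infty$ and $Q_th = p_t * h \equiv 0$, where $p_t$ denotes the Gaussian heat kernel. Taking Fourier transforms in the sense of tempered distributions gives $\hat h(\xi)\,e^{-t\|\xi\|^2/2} \equiv 0$; since the Gaussian factor is nowhere vanishing, $\hat h$ vanishes as a tempered distribution and hence $f\eqa f_0$. I do not anticipate a genuine obstacle here: the two essential recognitions are that Lemma \ref{lem:anal} already supplies the norm bound, and that inversion of $Q_t$ on bounded functions is a clean Fourier-analytic fact that applies uniformly in both the strict and boundary cases.
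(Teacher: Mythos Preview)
Your proposal is correct and follows essentially the same approach as the paper: both arguments obtain the norm bound from Lemma~\ref{lem:anal}, verify that the displayed candidate $f_0$ satisfies $Q_tf_0(x)=\Phi(\langle a,x\rangle+b)$ by a direct Gaussian computation, and then conclude $f\eqa f_0$ via Fourier-analytic injectivity of the heat semigroup on bounded functions. The only cosmetic difference is that the paper phrases the last step as a general injectivity statement ($Q_tf=Q_tg$ iff $f\eqa g$) before applying it, whereas you work directly with $h=f-f_0$.
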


\begin{proof}
That $\|a\|\le t^{-1/2}$ follows as $\Phi^{-1}(Q_tf)$ is 
$t^{-1/2}$-Lipschitz by Lemma \ref{lem:anal}. Now note that if $f(x)$ 
has the form stated in the lemma, then indeed $Q_tf(x)=\Phi(\langle 
a,x\rangle+b)$ (this follows from the identities that appear in section 
\ref{sec:main} in the proof of sufficiency in Theorem \ref{thm:main}). 
It therefore suffices to show that the heat semigroup $Q_t$ is injective 
modulo null sets, that is, that $Q_tf=Q_tg$ if and only if $f\eqa g$. 

To this end, note that $Q_tf=f*\varphi_t$ with
$\varphi_t(x):=e^{-\|x\|^2/2t}/(2\pi t)^{n/2}$.
As $\varphi_t$ is a rapidly 
decreasing function whose Fourier transform is strictly positive, we 
have $Q_tf=Q_tg$ if and only if $\hat f=\hat g$ as tempered 
distributions \cite[Theorem 7.19]{Rud91}, which implies that $f=g$ a.e.\ by the 
Fourier inversion theorem \cite[Theorem 7.15]{Rud91}.
\end{proof}

Corollary \ref{cor:ndeq} and Lemmas \ref{lem:eqcalc} and 
\ref{lem:invert} complete the proof of Proposition \ref{prop:nondeg}.

\begin{rem}
In the setting of this section, Lemma \ref{lem:invert} is not really
essential. Indeed, Corollary \ref{cor:ndeq} and Lemma
\ref{lem:eqcalc} show that for every $t\in(0,1)$,
$$
	Q_th(x) = \Phi(\langle a_t,x\rangle+\lambda b_t+\mu c_t),\quad
	Q_tf(x) = \Phi(\langle a_t,x\rangle+b_t),\quad
	Q_tg(x) = \Phi(\langle a_t,x\rangle+c_t)
$$
for some $a_t,b_t,c_t$.
Thus the functions $f,g,h$ could be recovered by a simple limiting
argument letting $t\downarrow 0$. However, in the analysis of the
degenerate case in the next section, we will encounter a situation
where the characterization of $Q_tf,Q_tg,Q_th$ may be guaranteed to hold only
for \emph{some} $t>0$ rather than for \emph{every} $t>0$. In that setting,
the limiting argument is no longer available. The advantage of
Lemma \ref{lem:invert} is that it allows us to capture both situations
simultaneously.
\end{rem}

\section{The basic degenerate case}
\label{sec:basedeg}

Now that we have addressed the (easy) nondegenerate equality cases, we 
are ready to tackle the degenerate situation. The aim of this section is 
to execute the program outlined at the end of section \ref{sec:borell} 
for the degenerate case $\lambda+\mu=1$ in the simplest one-dimensional 
setting. The latter will simplify the analysis, and we extend the result 
to any dimension in the next section by an induction argument. The key 
difficulty of the problem arises already in one dimension, and the 
present section forms the core of our analysis. Our aim is to prove the 
following result.

\begin{prop}
\label{prop:deg1d}
Let $\lambda,\mu>0$ satisfy $\lambda+\mu=1$.
Let $f,g,h:\mathbb{R}\to[0,1]$ be nontrivial measurable
functions satisfying 
$$
	\Phi^{-1}(h(\lambda x+\mu y))
	\gea \lambda \Phi^{-1}(f(x)) + \mu \Phi^{-1}(g(y)).
$$
If equality holds in the Ehrhard-Borell inequality
$$
	\Phi^{-1}\bigg(\int h\, d\gamma_1\bigg) =
	\lambda\Phi^{-1}\bigg(\int f\, d\gamma_1\bigg) +
	\mu\Phi^{-1}\bigg(\int g\, d\gamma_1\bigg),
$$
then \textbf{either}
$$
	h(x) \eqa \Phi(ax + \lambda b+\mu c),\quad
	f(x) \eqa \Phi(ax + b),\quad
	g(x) \eqa \Phi(ax + c)
$$
for some $a,b,c\in\mathbb{R}$, \textbf{or}
$$
	h(x) \eqa 1_{ax + \lambda b+\mu c\ge 0},\quad
	f(x) \eqa 1_{ax + b\ge 0},\quad
	g(x) \eqa 1_{ax + c\ge 0}
$$
for some $a,b,c\in\mathbb{R}$, \textbf{or}
$$
	h(x) \eqa f(x) \eqa g(x) \eqa \Phi(V(x))
$$
for some concave function $V:\mathbb{R}\to\mathbb{\bar R}$.
\end{prop}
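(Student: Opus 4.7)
The plan is to analyze the support of the 2D diffusion $(X_t,Y_t)$ from Lemma~\ref{lem:fk}, which, since $\lambda+\mu=1$ forces $\rho=1$, is driven by the single Brownian motion $W_t\equiv B_t$ in the constant direction $\sigma=(1,1)^\top$. First I will mimic Corollary~\ref{cor:ndeq} to show $C(s,x,y)\ge 0$ on $(0,1)\times\mathbb{R}^2$ by applying Theorem~\ref{thm:be} to the shifts $\tilde f(z)=f(x+\sqrt{s}z)$, $\tilde g(z)=g(y+\sqrt{s}z)$, $\tilde h(z)=h(\lambda x+\mu y+\sqrt{s}z)$; then combining with Lemma~\ref{lem:fk} and the equality assumption $C(1,0,0)=0$ will force $C(1-t,X_t,Y_t)=0$ a.s.\ for every $t\in[0,1)$, so that $C(s,\cdot,\cdot)$ vanishes on the support $S_s:=\supp(X_{1-s},Y_{1-s})\subseteq\mathbb{R}^2$ for every $s\in(0,1)$.

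Next I would apply the Stroock--Varadhan support theorem together with the accessibility and orbit arguments of \cite{SV72,NvdS16} to the control system $\dot x=b_1+u$, $\dot y=b_2+u$, deriving a dichotomy: either the Lie algebra generated by $\sigma$ and $b(s,\cdot,\cdot)$ has full rank at some accessible point---so $S_s$ has nonempty interior in $\mathbb{R}^2$---or $[\sigma,b]\parallel\sigma$ everywhere on the orbit, the rank-2 distribution $\mathrm{span}\{\partial_s+b_1\partial_x+b_2\partial_y,\,\sigma\}$ is involutive, and the orbit is the 2-dimensional submanifold $\{(s,x,x)\}$, forcing $S_s\subseteq D:=\{(x,x):x\in\mathbb{R}\}$. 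In the nondegenerate subcase, $C(s,\cdot,\cdot)\equiv 0$ on an open set, hence on all of $\mathbb{R}^2$ by analyticity (Lemma~\ref{lem:anal}); Lemma~\ref{lem:eqcalc} then makes $u_f(s,\cdot),u_g(s,\cdot),u_h(s,\cdot)$ affine, and Lemma~\ref{lem:invert} delivers equality cases \eqref{eq:H1} or \eqref{eq:H2}.

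In the degenerate subcase $S_s\subseteq D$, the identity $C(s,x,x)=0$ reads $u_h(s,x)=\lambda u_f(s,x)+\mu u_g(s,x)$; and since $D$ is a set of minima of $C\ge 0$, the transverse gradient $(\partial_x-\partial_y)C$ also vanishes on $D$. Using $\lambda+\mu=1$ and the simplifications $\lambda-\mu-1=-2\mu$, $\lambda-\mu+1=2\lambda$, a short elimination yields $\partial_x u_f(s,x)=\partial_x u_g(s,x)$ for all $s\in(0,1)$, $x\in\mathbb{R}$, so $u_f-u_g=\alpha(s)$ depends only on $s$ and $u_h=u_g+\lambda\alpha$. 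Now substituting $Q_s g=\Phi(u_g)$ into the heat equation produces the nonlinear parabolic equation $\partial_s u_g=\tfrac12\bigl(\partial_x^2 u_g-u_g(\partial_x u_g)^2\bigr)$, and its analogue for $u_f$; subtracting and using $\partial_x u_f=\partial_x u_g$ gives $\alpha'(s)=-\tfrac12\alpha(s)(\partial_x u_g(s,x))^2$ for every $x$. Then either $\alpha(s_0)=0$ at some $s_0$---in which case ODE uniqueness forces $\alpha\equiv 0$ on $(0,1)$, so $u_f\equiv u_g\equiv u_h$, hence $f\eqa g\eqa h=\Phi(V)$ by the injectivity of $Q_s$ from Lemma~\ref{lem:invert}, with $V$ a.e.\ concave by \eqref{eq:B}---or $\partial_x u_g(s,\cdot)$ is constant in $x$, making $u_g(s,\cdot)$ affine and returning us to the nondegenerate conclusion \eqref{eq:H1} or \eqref{eq:H2}.

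The hard part will be the rigorous execution of the Lie-algebraic dichotomy above, namely demonstrating that the degenerate rank condition actually confines $S_s$ to the diagonal $D$. This requires combining the Stroock--Varadhan support theorem with the orbit theorem for vector fields whose drift $b$ is itself defined in terms of the unknown regularized functions $u_f,u_g,u_h$, so the control-theoretic analysis must be performed in tandem with the algebraic structure of $b$ from Lemma~\ref{lem:para} rather than using externally given data. A secondary technical point, handled implicitly throughout, is that since $f,g,h$ are only measurable, every computation must be carried out on $(0,1)\times\mathbb{R}$ using the analytic heat-regularized functions, with the passage back to $f,g,h$ deferred to the very end and accomplished exclusively via Lemma~\ref{lem:invert}.
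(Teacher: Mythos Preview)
Your overall architecture matches the paper's---the Feynman--Kac argument, the Stroock--Varadhan/Lie-bracket dichotomy, analyticity, and Lemmas~\ref{lem:eqcalc}--\ref{lem:invert} are all used exactly as you describe. However, the central claim of your degenerate branch is wrong: the failure of the rank condition does \emph{not} force the orbit to be the diagonal leaf $\{(s,x,x)\}$. Consider the equality case \eqref{eq:H1} with $b\ne c$. Then $u_f(t,x)=ax+b_t$, $u_g(t,y)=ay+c_t$, so $b_1=-a(ax+b_t)$, $b_2=-a(ay+c_t)$, and one checks $\partial_x b_1+\partial_y b_1=\partial_x b_2+\partial_y b_2=-a^2$: the bracket $[\sigma,A]$ is parallel to $\sigma$ everywhere, yet $d(X_t-Y_t)=[-a^2(X_t-Y_t)+a(c_t-b_t)]\,dt$ is a deterministic ODE with nonzero solution, so $\supp\mu_t$ lives on a translate of the diagonal, not on $D$. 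Thus your dichotomy ``nondegenerate $\Rightarrow$ interior; degenerate $\Rightarrow S_s\subseteq D$'' misses an entire family of equality cases, and the subsequent elimination argument (which assumes $u_f'(s,x)=u_g'(s,x)$ for \emph{all} $x$) never gets off the ground.

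The paper repairs this by extracting more from the minimizer structure than the first-order condition alone. At every $(x,y)\in\supp\mu_t$ the Hessian of $C(1-t,\cdot,\cdot)$ is positive semidefinite; combined with $\Delta_\rho C=0$ (from Lemma~\ref{lem:para} and $\partial_t C=\nabla C=0$) this forces $u_h''=u_f''=u_g''$ on the support. Plugging these and the first-order identities $u_h'=u_f'=u_g'$ into the Lie-bracket identity yields the pointwise product $(u_f-u_g)\,u_h''=0$ on $\supp\mu_t$. This produces a \emph{further} dichotomy: either $u_f=u_g$ at every support point (your diagonal picture, which then does give $X_t=Y_t$ and the concave case), or there is a support point where $u_f\ne u_g$, hence $u_h''=u_f''=u_g''=0$ nearby---and an accumulation-point argument via analyticity (Lemma~\ref{lem:accum} in the paper) then makes $u_f,u_g,u_h$ globally affine, recovering \eqref{eq:H1}/\eqref{eq:H2}. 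Your ODE computation for $\alpha(s)=u_f-u_g$ is correct once one is genuinely on the diagonal, but getting there requires the second-order information you omitted.
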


The remainder of this section is devoted to the proof of this result. 
The assumptions of Proposition \ref{prop:deg1d} will be assumed to be 
in force throughout this section.

\subsection{A geometric criterion for nondegenerate support}
\label{sec:lie}

The assumption of this section that $\lambda+\mu=1$ implies $\rho=1$. 
Therefore, in the present setting, $W_t$ is a one-dimensional standard 
Brownian motion and $B_t=W_t$ in Lemma \ref{lem:fk}. In particular, the 
two-dimensional stochastic differential equation for $(X_t,Y_t)$ is 
driven only by the one-dimensional Brownian motion $W_t$, that is,
\begin{align*}
	dX_t &= b_1(1-t,X_t,Y_t)\,dt + dW_t, \\
	dY_t &= b_2(1-t,X_t,Y_t)\,dt + dW_t,
\end{align*}
where we denote the first and second component of the function $b$
in Lemma \ref{lem:para} by $b_1,b_2$, respectively. To avoid
time-inhomogeneity, however, it will be convenient to view $(t,X_t,Y_t)$ as the
solution of a three-dimensional equation where $t$ has trivial dynamics.
The latter equation is time-homogeneous and
should be viewed as being driven by the
drift vector field $A$ and diffusion vector field $B$ on $\mathbb{R}^3$
defined by
$$
	A(t,x,y) := \frac{\partial}{\partial t} +
	b_1(1-t,x,y)\frac{\partial}{\partial x}+
	b_2(1-t,x,y)\frac{\partial}{\partial y},\qquad
	B(t,x,y) := \frac{\partial}{\partial x}+\frac{\partial}{\partial y}.
$$
By construction, these vector fields are smooth on $[0,1)\times\mathbb{R}^2$.

As was explained in section \ref{sec:borell}, at the heart of our proof 
lies the dichotomy between nondegenerate and degenerate support of
$(X_t,Y_t)$. In order to exploit this dichotomy, we will need a 
mechanism that translates the geometry of the support into an algebraic 
property of the function $b$, which will allow us to extract the 
equality cases. The aim of this subsection is to introduce the necessary 
machinery; the analysis of the equality cases will be done in the next 
subsection.

Before we give a precise formulation of the technique that we will use, 
let us provide a brief informal discussion. At every point $(t,x,y)$, 
our differential equation\footnote{One should 
	view this as an ordinary differential equation that is 
	forced by the vector field $A+B\,\dot W_t$: this idea will be made
	rigorous in Lemma \ref{lem:supp} below. More generally,
	stochastic differential equations of this kind may be defined
	in Stratonovich form \cite[Chapter V]{IW89}, but all subtleties
	of the general theory are avoided here as our vector field
	$B$ happens to be constant.
}
$$
	d(t,X_t,Y_t) = A(t,X_t,Y_t) + B(t,X_t,Y_t) 
	\,dW_t
$$
can push in any direction in the linear span of $A(t,x,y)$ and 
$B(t,x,y)$ as the magnitude of $dW_t$ is random. Now suppose $(X_t,Y_t)$ 
is supported at every time in a (possibly different) lower-dimensional 
submanifold of $\mathbb{R}^2$. Then 
the triple $(t,X_t,Y_t)$ must always lie in a lower-dimensional 
submanifold $M\subset\mathbb{R}^3$. Consequently, the linear span of 
$A(t,x,y)$ and $B(t,x,y)$ must be contained in the tangent space 
$T_{(t,x,y)}M$ for all $(t,x,y)\in M$, and thus every vector field in 
the Lie algebra generated by $A,B$ must do so as well (the latter is 
evident from the fact that the restrictions of $A,B$ to $M$ may be viewed 
intrinsically as vector fields on $M$, so their Lie brackets must live on 
$M$ as well). In particular, this Lie algebra cannot be 
full-dimensional at any such point. If we invert this logic, we expect 
that if the Lie algebra generated by $A,B$ is full-dimensional at some 
point in the support of $(X_t,Y_t)$, then the support cannot be 
degenerate. This is precisely the mechanism that we will exploit.

\begin{rem}
It should be evident from this informal discussion that an argument of 
this kind cannot establish global nondegeneracy of the support as we did
in Lemma \ref{lem:girs}; it can at best yield local nondegeneracy in a 
neighborhood of the point where the Lie algebra is full-dimensional.
This will suffice for our purposes, however, as analyticity of the
function $C$ (Lemma \ref{lem:anal}) implies that if equality $C=0$ holds 
in any open set, this must imply equality everywhere. This is one of a 
number of technical details of our program that will be developed in the 
proof below.
\end{rem}

We now proceed to make these informal ideas precise. For a
probability measure $\mu$ on a separable metric space $\Omega$,
the \emph{support} $\supp\mu$ is defined as
\begin{align*}
	\supp\mu := \mbox{}&
	\bigcap\{K\subseteq\Omega:K\mbox{ is closed, }\mu(K)=1\}
	\\
	= \mbox{}&
	\{x\in\Omega:\mu(V)>0\mbox{ for all open neighborhoods }V\ni x\}.
\end{align*}
That is, $\supp\mu$ is the smallest closed set of unit probability (that 
$\supp\mu$ itself has unit probability follows as every open cover on 
$\Omega$ has a countable subcover \cite[\S II.2]{Par67}). In the sequel, 
we will denote by $\mu_t$ the distribution of $(X_t,Y_t)$ on 
$\mathbb{R}^2$.

\begin{prop}
\label{prop:lie}
Let $\mathscr{L}$ be the Lie algebra generated by $A,B$, that is, the
linear span of the vector fields $A,B,[A,B],[A,[A,B]],[B,[A,B]],\ldots$
Suppose that there exists a time $t\in(0,1)$ and 
a point $(x,y)\in\supp\mu_t$ for which
$$
	\dim(\{F(t,x,y):F\in\mathscr{L}\}) = 3.
$$
Then $\supp\mu_t$ contains a (non-empty) open subset of $\mathbb{R}^2$.
\end{prop}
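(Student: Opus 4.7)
The plan is to combine the Stroock-Varadhan support theorem \cite{SV72} with a local accessibility result from geometric control theory \cite{NvdS16}. Since $\rho = 1$ forces $B_t = W_t$, the augmented process $(s, X_s, Y_s)$ is a genuine diffusion in $\mathbb{R}^3$ driven by a single one-dimensional Brownian motion $W$, with drift vector field $A$ and diffusion vector field $B$, and I would work throughout in this three-dimensional picture.

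First, by the Stroock-Varadhan support theorem, the topological support of the law of the path $(s, X_s, Y_s)_{s \le t}$ coincides with the closure (in path space) of the set of solutions to the controlled ODE $\dot\xi_s = A(\xi_s) + B(\xi_s)\dot w_s$, $\xi_0 = 0$, as $w$ ranges over smooth controls. Since $\dot s \equiv 1$, every time-$t$ endpoint has first coordinate exactly $t$; let $\mathcal{A}_t \subseteq \mathbb{R}^2$ denote the projection onto the $(x,y)$-plane of these endpoints, so that $\supp \mu_t = \overline{\mathcal{A}_t}$. Given $(x, y) \in \supp \mu_t$ with $\mathscr{L}$ of rank $3$ at $(t, x, y)$, pick a sequence $(x_n, y_n) \in \mathcal{A}_t$ with $(x_n, y_n) \to (x, y)$. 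The rank of a finitely generated smooth distribution is lower semi-continuous, and since $3$ is maximal, $\mathscr{L}$ still has rank $3$ at $(t, x_n, y_n)$ for all large $n$.

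Next, I would convert this weak rank condition on $\mathscr{L}$ into full spatial rank of the strong accessibility ideal $\mathscr{L}_0 \subseteq \mathscr{L}$ generated by $B$. A direct induction on bracket depth, using that $B = \partial_x + \partial_y$ is constant and that the $\partial_t$-component of $A$ is the constant function $1$, shows that every iterated Lie bracket of $A$ and $B$ involving at least one occurrence of $B$ has vanishing $\partial_t$-component. Hence $\mathscr{L}_0$ lies pointwise in $\mathrm{span}\{\partial_x, \partial_y\}$, whereas $A$ is the only generator with a nonzero $\partial_t$-component; from $\mathscr{L} = \mathbb{R} A + \mathscr{L}_0$ one deduces $\dim \mathscr{L} = 1 + \dim \mathscr{L}_0$ pointwise. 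The hypothesis at $(t, x_n, y_n)$ therefore forces $\mathscr{L}_0$ to have rank $2$ there, spanning the entire spatial tangent plane.

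With the strong accessibility rank condition verified at the actually reachable point $(t, x_n, y_n)$, I would invoke the strong accessibility (local controllability) theorem from \cite{NvdS16}: the fixed-time reachable set $\mathcal{A}_t$ contains an open $\mathbb{R}^2$-neighborhood of $(x_n, y_n)$. Since $\mathcal{A}_t \subseteq \overline{\mathcal{A}_t} = \supp \mu_t$, this open set lies in $\supp \mu_t$, completing the proof. The main obstacle is the algebraic reduction in the third paragraph: the hypothesis is formulated for the \emph{weak} accessibility algebra $\mathscr{L}$ on $\mathbb{R}^3$, while fixed-time-$t$ accessibility in the $2$D spatial slice is governed by the \emph{strong} accessibility ideal $\mathscr{L}_0$, and the equivalence here rests entirely on the special structure of $A$ and $B$ (constancy of $B$ and of the $\partial_t$-part of $A$). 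A secondary technical point, that Stroock-Varadhan only gives approximate reachability of $(x, y)$, is handled cleanly by the rank semi-continuity above.
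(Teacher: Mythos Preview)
Your approach coincides with the paper's: a Stroock--Varadhan support characterization combined with a control-theoretic accessibility argument, passing from $(x,y)\in\supp\mu_t$ to a genuinely reachable point via openness of the rank condition. Your explicit reduction $\mathscr{L}=\mathbb{R}A+\mathscr{L}_0$ with $\mathscr{L}_0\subseteq\mathrm{span}\{\partial_x,\partial_y\}$ pointwise, hence $\dim\mathscr{L}=1+\dim\mathscr{L}_0$, is correct and makes explicit what the paper absorbs into its citation of \cite{NvdS16}.

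Two points need tightening. First, \cite{SV72} assumes bounded coefficients, which $b$ does not have here; the paper circumvents this by proving the support characterization directly, exploiting that the diffusion field $B$ is constant so that the solution map $h\mapsto(x^h,y^h)$ is continuous and one can push forward the known support of Wiener measure. Second, your final invocation overshoots: strong accessibility at $(t,x_n,y_n)$ yields that the reachable set \emph{from} that point in strictly positive time has nonempty interior---hence $\mathcal{A}_{t+\varepsilon}$ has interior for $\varepsilon>0$---but not $\mathcal{A}_t$ itself, and the stronger claim of an open neighborhood of $(x_n,y_n)$ would be local controllability, which the rank condition alone does not give. The paper handles exactly this by perturbing in time as well as in space: since the rank condition is open in $\mathbb{R}^3$ and $(t,x_n,y_n)$ lies on a controlled trajectory from the origin, one picks $t'<t$ close to $t$ and $(x',y')\in R(t')$ on that trajectory where the rank is still full; then the accessibility theorem gives that $R(t)=R(t'+(t-t'))\subseteq\supp\mu_t$ has nonempty interior. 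Your argument is easily completed in the same way.
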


This result provides the key tool that will be used in the following 
subsection to characterize the equality cases. The rest of this 
subsection is devoted to its proof.

The proof of Proposition \ref{prop:lie} is not really new, but rather 
combines two classical results: the Stroock-Varadhan support theorem 
\cite{SV72} and the characterization of local accessibility in nonlinear 
control theory \cite[\S 3.1]{NvdS16}. Unfortunately, the result of 
\cite{SV72} cannot be applied directly in the present setting as it 
requires the coefficients of the underlying stochastic differential 
equation to be bounded. While this could be addressed by an additional 
localization argument, the present setting proves to be particularly 
simple due to the fact that the diffusion vector field $B$ is constant, 
so we find it easier and more illuminating to give a direct proof.

Before we proceed, let us first record a simple and well-known observation.

\begin{lem}
\label{lem:imgsupp}
Let $\Omega,\Omega'$ be separable metric spaces, $\mu$ a probability
measure on $\Omega$, and $\iota:\Omega\to\Omega'$ a continuous function.
Then $\supp(\mu\circ\iota^{-1}) = \mathop{\mathrm{cl}}\iota(\supp\mu)$.
\end{lem}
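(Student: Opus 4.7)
The plan is to prove the two set inclusions separately, using the two equivalent descriptions of $\supp\mu$ recalled just before the statement: the smallest closed set of full measure, and the set of points all of whose open neighborhoods have positive measure.

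For the inclusion $\supp(\mu\circ\iota^{-1})\subseteq\cl\iota(\supp\mu)$, I would observe that $\iota$ maps $\supp\mu$ into $\iota(\supp\mu)\subseteq\cl\iota(\supp\mu)$, so $\supp\mu\subseteq\iota^{-1}(\cl\iota(\supp\mu))$. Continuity of $\iota$ makes $\iota^{-1}(\cl\iota(\supp\mu))$ closed in $\Omega$, so it has full $\mu$-measure (since it contains $\supp\mu$). Equivalently, $\cl\iota(\supp\mu)$ has full measure under $\mu\circ\iota^{-1}$, and because $\supp(\mu\circ\iota^{-1})$ is by definition the smallest closed set of full measure for $\mu\circ\iota^{-1}$, the inclusion follows.

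For the reverse inclusion $\cl\iota(\supp\mu)\subseteq\supp(\mu\circ\iota^{-1})$, I would pick $y=\iota(x)$ with $x\in\supp\mu$ and use the neighborhood characterization of the support. Any open neighborhood $V$ of $y$ pulls back under the continuous map $\iota$ to an open neighborhood $\iota^{-1}(V)$ of $x$, which then satisfies $\mu(\iota^{-1}(V))>0$ since $x\in\supp\mu$. This says exactly $(\mu\circ\iota^{-1})(V)>0$, so $y\in\supp(\mu\circ\iota^{-1})$. Therefore $\iota(\supp\mu)\subseteq\supp(\mu\circ\iota^{-1})$, and taking closures yields the claim since supports are closed by construction.

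There is no real obstacle here: both directions are essentially formal manipulations of the two characterizations of $\supp\mu$ together with continuity of $\iota$, and the separability hypothesis only enters implicitly to guarantee that $\supp\mu$ itself has full measure (which was already noted in the text using the Lindel\"of property). I would simply write the two short inclusions in sequence to conclude.
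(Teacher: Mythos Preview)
Your proof is correct and essentially identical to the paper's: both establish $\cl\iota(\supp\mu)\subseteq\supp(\mu\circ\iota^{-1})$ via the open-neighborhood characterization of the support and continuity of $\iota$, and the reverse inclusion by observing that $\cl\iota(\supp\mu)$ is a closed set of full $\mu\circ\iota^{-1}$-measure (since its preimage contains $\supp\mu$). The only cosmetic difference is that you spell out the intermediate step $\supp\mu\subseteq\iota^{-1}(\cl\iota(\supp\mu))$ explicitly, whereas the paper compresses this into the single inequality $\nu(\cl\iota(\supp\mu))\ge\mu(\supp\mu)=1$.
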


\begin{proof}
Let $\nu=\mu\circ\iota^{-1}$. If
$\omega\in\supp\mu$, then for any open neighborhood $V$ of $\iota(\omega)$,
we have $\nu(V)=\mu(\iota^{-1}(V))>0$ as $\iota^{-1}(V)$ is an open 
neighborhood of $\omega$. Thus $\iota(\supp\mu)\subseteq 
\supp\nu$, and as $\supp\nu$ is closed we obtain
$\mathop{\mathrm{cl}}\iota(\supp\mu)\subseteq\supp\nu$.
But note that $\nu(\mathop{\mathrm{cl}}\iota(\supp\mu))
\ge\mu(\supp\mu)=1$. As $\supp\nu$ is the smallest closed set with
unit measure, we have also shown the converse inclusion $\supp\nu\subseteq 
\mathop{\mathrm{cl}}\iota(\supp\mu)$.
\end{proof}

We are now ready to state the first ingredient of the proof of 
Proposition \ref{prop:lie}. In the discussion at the beginning of this 
subsection, we stated that our stochastic differential equation ``can 
push in any direction in the linear span of $A$ and $B$.'' This 
intuitively obvious statement is made precise by the following result, 
which is a form of the Stroock-Varadhan support theorem. It states that 
the support of the law of $(X_s,Y_s)_{s\in[0,t]}$, viewed as a random 
continuous path, is precisely the set of all solutions of ordinary 
differential equations that are driven at every time $t$ by any vector 
field of the form $A+\dot h(t)B$ (provided $\dot h$ is measurable).

\begin{lem}
\label{lem:supp}
Let $t\in(0,1)$, and denote by $\mathcal{C}_0([0,t];\mathbb{R}^n)$ the set 
of continuous paths $h:[0,t]\to\mathbb{R}^n$ with $h(0)=0$, endowed with 
the topology of uniform convergence.
For any $h\in\mathcal{C}_0([0,t];\mathbb{R})$, let $(x^h(s),y^h(s))$ be
the solution of the differential equation
\begin{align*}
	x^h(s) &= \int_0^s b_1(1-u,x^h(u),y^h(u))\,du + h(s), \\
	y^h(s) &= \int_0^s b_2(1-u,x^h(u),y^h(u))\,du + h(s).
\end{align*}
Define the measure $\mathbf{P}_t$ on 
$\mathcal{C}_0([0,t];\mathbb{R}^2)$ to be the law of $(X_s,Y_s)_{s\in[0,t]}$. Then
$$
	\supp\mathbf{P}_t = \mathop{\mathrm{cl}}(\{
	(x^h(s),y^h(s))_{s\in[0,t]}:h\in \mathcal{C}_0([0,t];\mathbb{R})\}).
$$
\end{lem}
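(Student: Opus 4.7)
The plan is to exploit the fact that the diffusion vector field $B$ is constant: the SDE for $(X_s,Y_s)$ then reduces to an ODE forced pathwise by the Brownian path $W$, and the only stochastic ingredient is the trivial identity $\int_0^s B\,dW_u=(W_s,W_s)^\top$. Concretely, define the solution map $\Psi:\mathcal{C}_0([0,t];\mathbb{R})\to\mathcal{C}_0([0,t];\mathbb{R}^2)$ by $\Psi(h):=(x^h,y^h)$. The lemma will follow from three steps: (1) $\Psi$ is well-defined and continuous in the uniform topology; (2) $(X_s,Y_s)_{s\in[0,t]}=\Psi(W)$ almost surely, so that $\mathbf{P}_t=\mu_W\circ\Psi^{-1}$ where $\mu_W$ denotes Wiener measure; (3) $\supp\mu_W=\mathcal{C}_0([0,t];\mathbb{R})$. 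Granting these, Lemma \ref{lem:imgsupp} applied to the continuous map $\Psi$ gives $\supp\mathbf{P}_t=\mathop{\mathrm{cl}}\Psi(\mathcal{C}_0([0,t];\mathbb{R}))$, which is precisely the claim.

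For step (1), introduce $\tilde x:=x^h-h$ and $\tilde y:=y^h-h$; the defining equation becomes the classical ODE
$$
\dot{\tilde x}(s)=b_1(1-s,\tilde x(s)+h(s),\tilde y(s)+h(s)),\qquad
\dot{\tilde y}(s)=b_2(1-s,\tilde x(s)+h(s),\tilde y(s)+h(s)).
$$
Smoothness of $b$ on $[0,t]\times\mathbb{R}^2$ (noted in the proof of Lemma \ref{lem:fk}) makes the right-hand side locally Lipschitz, so Picard--Lindel\"of yields local existence and uniqueness, while the linear growth bound on $b$ already established in the proof of Lemma \ref{lem:fk} rules out blow-up before time $t<1$. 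Continuity of $\Psi$ then follows by a standard Gronwall estimate on the difference of two solutions, using that an a priori uniform bound on $(\tilde x,\tilde y)$ confines the argument to a compact set on which $b$ is globally Lipschitz.

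For step (2), since $B$ is constant the stochastic integral reduces to $W_s$, so the SDE of Lemma \ref{lem:fk} becomes, almost surely,
$$
X_s=\int_0^s b_1(1-u,X_u,Y_u)\,du+W_s,\qquad
Y_s=\int_0^s b_2(1-u,X_u,Y_u)\,du+W_s,
$$
i.e.\ $(X_\cdot,Y_\cdot)$ satisfies the integral equation defining $\Psi(W)$. Pathwise uniqueness from step (1) forces $(X_\cdot,Y_\cdot)=\Psi(W)$ a.s. Step (3) is the classical statement that Wiener measure has full topological support on $\mathcal{C}_0([0,t];\mathbb{R})$, which follows from the Cameron--Martin theorem together with density of smooth paths in $\mathcal{C}_0$.

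The principal technical obstacle is the continuity assertion in step (1): $b$ is only \emph{locally} Lipschitz, and convergence $h_n\to h$ need not a priori confine the associated trajectories to a single compact set. This is the only place where the linear growth estimate plays a serious role---it provides a uniform a priori bound on $(\tilde x_n,\tilde y_n)$ in terms of $\sup_n\|h_n\|_\infty$, after which Gronwall on a fixed compactum concludes. Everything else in the proof is bookkeeping.
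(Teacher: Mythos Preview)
Your proposal is correct and follows essentially the same approach as the paper: both exploit that the constant diffusion vector field makes the SDE a pathwise ODE driven by $W$, invoke full support of Wiener measure via Cameron--Martin plus density, establish continuity of the solution map using local Lipschitz continuity together with the linear growth bound and a Gronwall argument, and then apply Lemma~\ref{lem:imgsupp}. The only cosmetic difference is that you subtract $h$ to reduce to a classical ODE before appealing to Picard--Lindel\"of, whereas the paper works directly with the integral equation.
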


\begin{proof}
Define the measure $\mathbf{W}_t$ on $\mathcal{C}_0([0,t];\mathbb{R})$ to
be the law of the Brownian motion $W_{[0,t]}:=(W_s)_{s\in[0,t]}$. It is a classical
fact that $\supp\mathbf{W}_t = \mathcal{C}_0([0,t];\mathbb{R})$. Indeed,
choose any $\omega\in\supp\mathbf{W}_t$, so that
$\mathbf{P}[\|W_{[0,t]}-\omega\|_\infty<\varepsilon]>0$ for all
$\varepsilon>0$. Then by Girsanov's theorem \cite[Theorem 3.5.1]{KS88},
we also have
$\mathbf{P}[\|W_{[0,t]}-\omega-h\|_\infty<\varepsilon]>0$ for all
$\varepsilon>0$ whenever $h\in\mathcal{C}_0([0,t];\mathbb{R})$ satisfies
$\int_0^t \big|\frac{dh}{ds}\big|^2\,ds<\infty$. Thus $\omega+h\in\supp\mathbf{W}_t$
for any such $h$. As such $h$ are dense in $\mathcal{C}_0([0,t];\mathbb{R})$,
the claim follows.

Now recall that the function $b$ is locally Lipschitz and satisfies a
linear growth condition, as was shown in the proof of Lemma \ref{lem:fk}.
Therefore, by a standard existence and uniqueness argument, the ordinary
differential equation for $(x^h(s),y^h(s))$ has a unique solution.
Thus the map $\iota:h\mapsto(x^h(s),y^h(s))_{s\in[0,t]}$ is well defined
and $\mathbf{P}_t=\mathbf{W}_t\circ\iota^{-1}$. By
Lemma \ref{lem:imgsupp}, it remains to show that the map
$\iota$ is continuous.

The latter is readily established as follows. Suppose first that $b$ is
globally Lipschitz with constant $L$. Then for any $g,h\in\mathcal{C}_0([0,t];
\mathbb{R})$ and $s\in[0,t]$, we have
$$
	\delta_{g,h}(s) \le
	2L\int_0^s \delta_{g,h}(u)\,du + 2\|g-h\|_\infty
$$
where $\delta_{g,h}(s):= |x^g(s)-x^h(s)|+|y^g(s)-y^h(s)|$. Thus
$\iota$ is Lipschitz, as by Gr\"onwall's lemma $\|\iota(g)-\iota(h)\|_\infty\le
2e^{2Lt}\|g-h\|_\infty$. Now suppose $b$ is only locally Lipschitz
and that $\|h\|_\infty\le K$. Then using the linear growth 
condition, we can estimate
$$
	1+|x^h(s)|+|y^h(s)| \lesssim
	\int_0^s \{1+|x^h(u)|+|y^h(s)|\}\,du + 1+K,
$$
so that $\|x^h\|_\infty + \|y^h\|_\infty \lesssim 1+K$ by Gr\"onwall.
In particular, $(x^h,y^h)$ never leaves a ball of radius
$C(1+K)$ for a universal constant $C$. We can
therefore modify $b$ outside this ball to make it globally Lipschitz
without changing the solution, where the Lipschitz constant depends
on $K$ only. Combining this observation with the Lipschitz bound above
shows that $\iota$ is locally Lipschitz, and hence continuous.
\end{proof}

The ordinary differential equation of Lemma \ref{lem:supp} 
may be viewed as a control system: a controller may choose the input 
$h$ of the equation to guide the dynamics to a desired location. The set
of all locations that can be reached by time $t$
$$
	R(t):=\{(x^h(t),y^h(t)):h\in\mathcal{C}_0([0,t],\mathbb{R})\}
	\subseteq \mathbb{R}^2
$$
is called the \emph{reachable set} of the control system. Lemmas
\ref{lem:imgsupp} and \ref{lem:supp} imply that
$$
	\supp\mu_t = \mathop{\mathrm{cl}}R(t).
$$
Therefore, in order to prove Proposition \ref{prop:lie}, it suffices
to show that $R(t)$ contains an open set. This will be accomplished using
the following classical result from
nonlinear control theory \cite[Theorem 3.21 and Theorem 3.9]{NvdS16}.

\begin{lem}
\label{lem:reach}
Let $\mathscr{L}$ be the Lie algebra generated by $A,B$, that is, the
linear span of the vector fields $A,B,[A,B],[A,[A,B]],[B,[A,B]],\ldots$
Suppose that there exists a time $t\in[0,1)$ and 
a point $(x,y)\in R(t)$ for which
$$
	\dim(\{F(t,x,y):F\in\mathscr{L}\}) = 3.
$$
Then $R(t+\varepsilon)$ contains a (non-empty) open set for every $\varepsilon>0$.
\end{lem}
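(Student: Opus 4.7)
I would follow the standard Chow--Krener argument from geometric control theory. The plan has three ingredients: approximating arbitrary Lie-algebra flows by compositions of the primitive flows of $A$ and $B$, identifying those compositions with limits of genuine system trajectories, and applying the inverse function theorem to a suitable parameterization to produce an open subset of $R(t+\varepsilon)$.

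\emph{First}, the classical commutator identity
$$
	\phi_{-\sqrt{s}}^G \circ \phi_{-\sqrt{s}}^F \circ \phi_{\sqrt{s}}^G \circ \phi_{\sqrt{s}}^F (p) = p + s\,[F,G](p) + O(s^{3/2}),
$$
by iteration, shows that the flow $\phi_s^K$ of any $K \in \mathscr{L}$ is a uniform-on-compacts limit of finite compositions of $\phi^A$'s and $\phi^B$'s. In the control system of Lemma \ref{lem:supp}, the input $h$ enters additively (with identical coefficient in both coordinates), so a control that is essentially constant over one subinterval and then ramps steeply over the next produces a trajectory approximating $\phi_s^A\circ\phi_\tau^B$. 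Hence every finite composition of the form $\phi_{s_k}^A\circ\phi_{\tau_k}^B\circ\cdots\circ\phi_{s_1}^A\circ\phi_{\tau_1}^B(t,x,y)$ with $\sum_j s_j=\tau$ belongs to $\cl R(\tau)$, and therefore so does $\phi_s^K(t,x,y)$ for every $K\in\mathscr{L}$.

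\emph{Second}, observe that $B=\partial_x+\partial_y$ has vanishing $\partial_t$-component while $A$ has $\partial_t$-component identically $1$; a short induction on the bracket depth shows that every iterated Lie bracket involving at least one $B$ has vanishing $\partial_t$-component, so $\mathscr{L} = \mathrm{span}(A) \oplus \mathscr{L}_B$ where each element of $\mathscr{L}_B$ preserves $t$. Thus if $\dim\,\{F(t,x,y):F\in\mathscr{L}\}=3$, there exist $K_1,K_2\in\mathscr{L}_B$ whose spatial parts at $(t,x,y)$ span $\mathbb{R}^2$. Define
$$
	\Psi(\sigma_0,\sigma_1,\sigma_2):=\phi_{\varepsilon+\sigma_0}^A\circ\phi_{\sigma_2}^{K_2}\circ\phi_{\sigma_1}^{K_1}(t,x,y).
$$
Since the $\phi^{K_i}$ preserve $t$ and $\phi^A$ advances $t$ at unit rate, $\Psi$ has differential of full rank at $0$ and is a local diffeomorphism onto a neighborhood of $\Psi(0)=(t+\varepsilon, x',y')$. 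Moreover, $\Psi(0,\sigma_1,\sigma_2)$ always lies in $\{t+\varepsilon\}\times\mathbb{R}^2$, so its image projects to an open set $V\subseteq\mathbb{R}^2$.

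\emph{Third}, by the first ingredient, $\Psi$ is the $C^1$-limit on a neighborhood of $0$ of maps $\Psi_n$ built entirely from compositions of $\phi^A$ and $\phi^B$ (with total $A$-duration equal to $\varepsilon+\sigma_0$), and these are in turn $C^1$-limits of maps $\tilde\Psi_n$ corresponding to genuine controls $h_n\in\mathcal{C}_0([0,t+\varepsilon],\mathbb{R})$. Stability of full-rank differentials under $C^1$-perturbation, together with the inverse function theorem, ensures that for large $n$ the image of $\tilde\Psi_n$ contains an open neighborhood of $\tilde\Psi_n(0)$ in $\mathbb{R}^3$, whose projection onto $\{t+\varepsilon\}\times\mathbb{R}^2$ is an open planar subset of $R(t+\varepsilon)$. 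The principal obstacle is passing from the limiting compositions (which a priori lie only in $\cl R(t+\varepsilon)$) to genuine elements of $R(t+\varepsilon)$ while preserving the exact total duration $t+\varepsilon$; both issues are controlled by the free parameter $\sigma_0$, which acts purely in the $t$-direction at unit speed and can therefore absorb any mismatch in the total drift time, and by the continuity of the endpoint map $h\mapsto(x^h(t+\varepsilon),y^h(t+\varepsilon))$ established in the proof of Lemma \ref{lem:supp}, which guarantees that small $C^0$-perturbations of the approximating control yield small perturbations of the endpoint.
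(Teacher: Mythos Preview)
Your overall strategy---use the Lie algebra rank condition together with the inverse function theorem to produce an open subset of the reachable set---is the right one, and it is essentially what the paper invokes by citing Nijmeijer--van der Schaft. But your first ingredient contains a genuine gap that is specific to control systems \emph{with drift}.

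The commutator identity you quote,
\[
\phi_{-\sqrt{s}}^G \circ \phi_{-\sqrt{s}}^F \circ \phi_{\sqrt{s}}^G \circ \phi_{\sqrt{s}}^F,
\]
requires flowing \emph{backward} along both $F$ and $G$. In the present system the drift $A$ has $\partial_t$-component identically $1$, so every trajectory of the control system advances $t$ monotonically; the flow $\phi^A_{-s}$ is simply not realizable by any control $h$. Consequently, any finite composition of forward $\phi^A$-flows and (arbitrary-sign) $\phi^B$-flows with total $A$-duration $T$ moves the $t$-coordinate by exactly $T>0$, and therefore cannot approximate $\phi^K_\sigma$ for $K\in\mathscr{L}_B$, which fixes $t$. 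Your claim that ``$\phi^K_s(t,x,y)\in\cl R(\tau)$ for every $K\in\mathscr{L}$'' does not follow from the compositions you have access to. The attempt to repair this by absorbing the surplus $A$-time into $\sigma_0$ fails as well: the approximants to $\phi^{K_i}_{\sigma_i}$ then carry an irreducible positive $A$-duration that does not tend to zero (to get a bracket of order $\sigma$ one needs $A$-time of order at least $\sigma/\tau$ with $\tau$ bounded, lest the higher-order terms $\tau^2[B,[B,A]]$ explode), so the limit of your $\Psi_n$ is not $\Psi$ but a different map with extra $\phi^A$-factors interleaved.

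The argument in the cited reference avoids this entirely. Rather than approximating bracket flows, one parameterizes trajectories directly by piecewise-constant controls: for controls $u_1,\dots,u_k$ and positive times $s_1,\dots,s_k$ with $\sum_j s_j=\varepsilon$, consider the endpoint map
\[
(s_1,\dots,s_k)\longmapsto \phi^{A+u_kB}_{s_k}\circ\cdots\circ\phi^{A+u_1B}_{s_1}(t,x,y),
\]
which uses only forward flows of the admissible vector fields $A+uB$. One then shows inductively that if the image is a $j$-dimensional immersed submanifold and the Lie algebra rank exceeds $j$ somewhere on it, there is a choice of $u_{j+1}$ for which $A+u_{j+1}B$ is transverse to the image, so adding one more switch increases the dimension. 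This is the argument that works for systems with drift; your Chow-type approximation is the right tool only for driftless (symmetric) systems.
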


\begin{proof}
The proof is identical to that of \cite[Theorem 3.21]{NvdS16} (the result
is formulated there for time-homogeneous vector fields, but the proof 
applies verbatim in the present setting). We omit the details, which
essentially amount to a careful implementation of the ideas described
at the beginning of this subsection.
\end{proof}

Combining the above results, we readily complete the
proof of Proposition \ref{prop:lie}.

\begin{proof}[Proof of Proposition \ref{prop:lie}]
Fix $t\in(0,1)$ and $(x,y)\in\supp\mu_t$ such that $\mathscr{L}$
has full dimension at $(t,x,y)$. As the vector fields $A,B$ are 
smooth, it follows that $\mathscr{L}$ has full dimension in an
open neighborhood of $(t,x,y)$. In particular, as
$\supp\mu_t=\mathop{\mathrm{cl}}R(t)$ by Lemma \ref{lem:supp},  
we can find $t'<t$ and $(x',y')\in R(t')$ such that $\mathscr{L}$
has full dimension at $(t',x',y')$. Thus $R(t)\subseteq\supp\mu_t$
contains a non-empty open set by Lemma \ref{lem:reach}.
\end{proof}

\begin{rem}
The connection between the support of diffusion processes and methods of 
nonlinear control theory has long been known, cf.\ 
\cite{Kun78}. What is not so obvious is that this mechanism enables 
characterization of the equality cases in geometric inequalities, which
will be shown presently. Let us also note that the Lie-algebraic condition
of Proposition \ref{prop:lie} is strongly reminiscent of the H\"ormander
condition for hypoellipticity, and indeed the same conclusion could be deduced
by applying a variant of this machinery (see, for example, \cite{HLT17} and
the references therein). However, hypoellipticity is a much stronger 
condition than is needed for our purposes, and the present direct
approach could prove to be more flexible in other situations.
\end{rem}

\subsection{Analysis of the equality cases}

With Proposition \ref{prop:lie} in hand, we are finally ready to proceed 
to the main argument. We begin by computing an explicit form of the 
assumption of Proposition \ref{prop:lie} for our particular choice of 
vector fields $A,B$.

\begin{lem}
\label{lem:exlie}
Suppose there exists $t\in(0,1)$ and $(x,y)\in\supp\mu_t$ such that
$$
	\frac{\partial b_1(1-t,x,y)}{\partial x}+
	\frac{\partial b_1(1-t,x,y)}{\partial y} \ne
	\frac{\partial b_2(1-t,x,y)}{\partial x}+
	\frac{\partial b_2(1-t,x,y)}{\partial y}.
$$
Then the condition of Proposition \ref{prop:lie} holds.
\end{lem}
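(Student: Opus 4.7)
The plan is to verify the full rank condition of Proposition \ref{prop:lie} by computing a single Lie bracket, namely $[B,A]$, and showing that under the stated hypothesis the three vectors $A$, $B$, $[B,A]$ are linearly independent at $(t,x,y)$.

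First, recall that in the ambient coordinates $(t,x,y)$ on $\mathbb{R}^3$ we have $A = (1,\,b_1(1-t,x,y),\,b_2(1-t,x,y))$ and $B = (0,\,1,\,1)$. Because $B$ has constant coefficients, the Lie bracket simplifies to $[B,A]^k = B(A^k)$ component-wise, so
$$
[B,A] = \Bigl(\tfrac{\partial b_1}{\partial x}+\tfrac{\partial b_1}{\partial y}\Bigr)\frac{\partial}{\partial x}
 + \Bigl(\tfrac{\partial b_2}{\partial x}+\tfrac{\partial b_2}{\partial y}\Bigr)\frac{\partial}{\partial y},
$$
with the partial derivatives evaluated at $(1-t,x,y)$. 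In particular $[B,A]$ has no $\partial_t$ component, which is what makes the computation clean.

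Now I would form the $3\times 3$ matrix whose rows are $A,B,[B,A]$ at the chosen point $(t,x,y)$, and expand its determinant along the first column. Only $A$ contributes to the first column (with entry $1$), so the determinant reduces to
$$
\det\begin{pmatrix} 1 & 1 \\ \partial_x b_1+\partial_y b_1 & \partial_x b_2+\partial_y b_2 \end{pmatrix}
= (\partial_x b_2+\partial_y b_2) - (\partial_x b_1+\partial_y b_1),
$$
which is nonzero precisely by the hypothesis of the lemma. Thus $A$, $B$, $[B,A]$ span a $3$-dimensional subspace at $(t,x,y)$. Since $A,B,[B,A]\in\mathscr{L}$, the rank condition $\dim\{F(t,x,y):F\in\mathscr{L}\}=3$ is satisfied, and the hypothesis of Proposition \ref{prop:lie} holds at the same $(t,x,y)\in\supp\mu_t$.

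There is no real obstacle here: the argument is a two-line calculation once one writes out the coordinate expressions of $A$, $B$, and $[B,A]$. The only conceptual point worth noting is that the hypothesis of the lemma is stated at $(1-t,x,y)$ in the space-time argument of $b$, matching the place where the derivatives appear in $[B,A]$ after the reparametrization $s\mapsto 1-s$ built into the definition of $A$.
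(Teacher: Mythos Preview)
Your proof is correct and is essentially the same as the paper's: both compute $[B,A]$, observe it has no $\partial_t$ component, and verify that $A$, $B$, $[B,A]$ are linearly independent at $(t,x,y)$. The only cosmetic difference is that you check independence via a $3\times 3$ determinant, while the paper argues directly that $A\notin\mathrm{span}\{B,[B,A]\}$ (since only $A$ has a time component) and that $B,[B,A]$ are independent by the hypothesis.
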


\begin{proof}
An easy computation shows that
\begin{align*}
	[B,A](t,x,y) &=
	\left(\frac{\partial b_1(1-t,x,y)}{\partial x}+
	\frac{\partial b_1(1-t,x,y)}{\partial y}\right)\frac{\partial}{\partial x}
	\\ &\qquad +
	\left(\frac{\partial b_2(1-t,x,y)}{\partial x}+
	\frac{\partial b_2(1-t,x,y)}{\partial y}\right)\frac{\partial}{\partial y}.	
\end{align*}
We trivially have $A(t,x,y)\not\in \mathrm{span}\{B(t,x,y),[B,A](t,x,y)\}$
as $A$ is the only vector field with a component in the time direction. The
assumption of the lemma further implies that $B(t,x,y)$ and $[B,A](t,x,y)$
are linearly independent. Hence the linear span of 
$A(t,x,y),B(t,x,y),[B,A](t,x,y)$ is full-dimensional.
\end{proof}

Roughly speaking, the proof of Proposition \ref{prop:deg1d} will 
proceed along the following lines. Suppose first that the assumption of 
Lemma \ref{lem:exlie} is satisfied. Then Proposition \ref{prop:lie} yields 
nondegeneracy of the support, and we may argue essentially as we did in 
Corollary \ref{cor:ndeq} to obtain the equality cases (see Lemma 
\ref{lem:degndeg} below).

On the other hand, suppose the assumption of Lemma \ref{lem:exlie} is 
violated. Then, by assumption, we obtain for every $t\in(0,1)$ and 
$(x,y)\in\supp\mu_t$ an identity between the derivatives of $b_1$ and 
$b_2$. One may hope that if one substitutes into this identity the 
explicit expressions for $b_1,b_2$ given in Lemma \ref{lem:para}, then the 
desired equality cases will follow from the resulting equation. 
Unfortunately, this single equation does not suffice to obtain the 
equality cases. However, we have not exploited so far a very useful fact: 
as $C(t,x,y)\ge 0$ everywhere and $C(t,x,y)=0$ for $(x,y)\in\supp\mu_t$, 
the latter points are all minimizers of the function $C$. Therefore, the 
first- and second-order conditions of optimality provide us with 
additional differential identities that must be satisfied on $\supp\mu_t$, 
which are recorded in the following lemma.

\begin{lem}
\label{lem:exmin}
Suppose that equality holds in the Ehrhard-Borell inequality. Then
\begin{align*}
	&
	\frac{\partial C(1-t,x,y)}{\partial x} =
	\frac{\partial C(1-t,x,y)}{\partial y} = 0, \\
	&\frac{\partial^2}{\partial x^2}C(1-t,x,y) =
	\frac{\partial^2}{\partial y^2}C(1-t,x,y) =
	-\frac{\partial^2}{\partial x\partial y}C(1-t,x,y)
\end{align*}
for every $t\in(0,1)$ and $(x,y)\in\supp\mu_t$.
\end{lem}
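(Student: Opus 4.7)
The strategy is to exploit the observation that every $(x,y) \in \supp\mu_t$ is a \emph{global} minimum of the function $C$. First, by applying the Ehrhard-Borell inequality to the shifted functions
$\tilde f(z) = f(x+\sqrt{1-t}\,z)$, $\tilde g(z) = g(y+\sqrt{1-t}\,z)$, $\tilde h(z) = h(\lambda x+\mu y+\sqrt{1-t}\,z)$, exactly as in Corollary \ref{cor:ndeq}, one obtains $C(1-t,x,y) \ge 0$ for every $t \in (0,1)$ and every $(x,y) \in \mathbb{R}^2$. On the other hand, the equality assumption and the Feynman-Kac representation of Lemma \ref{lem:fk} give
$$
    0 = C(1,0,0) = \mathbf{E}\bigg[C(1-t,X_t,Y_t)\,
    e^{-\frac{1}{2}\int_0^t\|\nabla u_h(1-s,\lambda X_s+\mu Y_s)\|^2 ds}\bigg].
$$
Since the integrand is nonnegative, $C(1-t,X_t,Y_t)=0$ almost surely; by continuity of $C(1-t,\cdot,\cdot)$ (Lemma \ref{lem:anal}), the zero set is closed, so it contains $\supp\mu_t$.

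Now fix $(x,y)\in\supp\mu_t$. Since $C\ge 0$ on the entire open domain $(0,1)\times\mathbb{R}^2$ and $C(1-t,x,y)=0$, the point $(1-t,x,y)$ is an interior global minimum of $C$ in all three variables. Therefore every first partial derivative vanishes there,
$$
    \frac{\partial C}{\partial t}(1-t,x,y) =
    \frac{\partial C}{\partial x}(1-t,x,y) =
    \frac{\partial C}{\partial y}(1-t,x,y) = 0,
$$
which establishes the first assertion, and the spatial Hessian
$$
    M := \begin{pmatrix}
    \partial_{xx}^2 C & \partial_{xy}^2 C \\
    \partial_{xy}^2 C & \partial_{yy}^2 C
    \end{pmatrix}(1-t,x,y)
$$
is positive semidefinite.

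For the second assertion I would feed these conditions into the parabolic equation of Lemma \ref{lem:para}. Since $\lambda+\mu=1$ forces $\rho=1$, the operator reduces to $\Delta_\rho = (\partial_x+\partial_y)^2$, and the vanishing of $\nabla C$ and of $C$ itself annihilate both the drift term $\langle b,\nabla C\rangle$ and the reaction term $-\tfrac{1}{2}\|\nabla u_h\|^2 C$. What remains is
$$
    0 = \frac{\partial C}{\partial t}(1-t,x,y)
    = \tfrac{1}{2}(\partial_x+\partial_y)^2 C(1-t,x,y)
    = \tfrac{1}{2}\,\bigl\langle (1,1), M(1,1)^{\top}\bigr\rangle.
$$
Thus the PSD form $M$ has $(1,1)$ as a null vector, and consequently $M(1,1)^{\top}=0$, i.e.\ $\partial_{xx}^2 C+\partial_{xy}^2 C=0$ and $\partial_{xy}^2 C+\partial_{yy}^2 C=0$ at $(1-t,x,y)$, yielding the identity $\partial_{xx}^2 C = \partial_{yy}^2 C = -\partial_{xy}^2 C$.

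There is no serious obstacle here, but the one subtle point worth flagging is precisely the mechanism above: the degeneracy $\rho=1$ means the Laplacian $\Delta_\rho$ is a rank-one form, so the PDE produces only the \emph{single} scalar equation $\langle(1,1),M(1,1)^{\top}\rangle=0$. What rescues the argument is that PSD plus a single vanishing direction pins down the entire kernel, turning one equation into two and supplying the structural identity needed for the subsequent analysis of the vector field $b$.
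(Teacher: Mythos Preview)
Your proof is correct and follows essentially the same approach as the paper: establish that points of $\supp\mu_t$ are global minima of $C$, use the first-order conditions and the parabolic equation to obtain $\Delta_\rho C=0$, and combine this with the positive semidefiniteness of the spatial Hessian. Your final step is slightly cleaner than the paper's---the paper uses the determinant inequality $\partial_{xx}^2 C\,\partial_{yy}^2 C \ge (\partial_{xy}^2 C)^2$ and eliminates variables algebraically, whereas you observe directly that $\Delta_\rho C=0$ with $\rho=1$ says $(1,1)$ is a null vector of the PSD form $M$, forcing $M(1,1)^\top=0$---but the two arguments are equivalent.
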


\begin{proof}
Arguing exactly as in the proof of Corollary \ref{cor:ndeq}, we find 
that $C(1-t,x,y)\ge 0$ for all $(t,x,y)\in[0,1)\times\mathbb{R}^2$, while
$C(1-t,x,y)=0$ whenever $t\in(0,1)$ and $(x,y)\in\supp\mu_t$. In 
particular, every point of the latter type is a minimizer of the smooth
function $C$. Consequently, the first-order conditions for optimality yield
$$
	\frac{\partial C(1-t,x,y)}{\partial t} =
	\frac{\partial C(1-t,x,y)}{\partial x} =
	\frac{\partial C(1-t,x,y)}{\partial y} = 0,
$$
which implies by Lemma \ref{lem:para} that
$$
	\Delta_\rho C(1-t,x,y) =
	\frac{\partial^2 C(1-t,x,y)}{\partial x^2} +
	2\frac{\partial^2 C(1-t,x,y)}{\partial x\partial y} +
	\frac{\partial^2 C(1-t,x,y)}{\partial y^2} = 0.
$$
On the other hand, by the second-order condition for optimality, the
Hessian of $C(1-t,\cdot,\cdot)$ is positive semidefinite, and thus its
determinant is nonnegative
$$
	\frac{\partial^2 C(1-t,x,y)}{\partial x^2} 
	\frac{\partial^2 C(1-t,x,y)}{\partial y^2} -
	\bigg(
	\frac{\partial^2 C(1-t,x,y)}{\partial x\partial y}
	\bigg)^2\ge 0.
$$
Combining the last two identities yields
$$
	-\frac{1}{4}\left(
	\frac{\partial^2 C(1-t,x,y)}{\partial x^2}- 
	\frac{\partial^2 C(1-t,x,y)}{\partial y^2}	
	\right)^2\ge 0,
$$
which implies
$$
	\frac{\partial^2 C(1-t,x,y)}{\partial x^2} =
	\frac{\partial^2 C(1-t,x,y)}{\partial y^2}.
$$
The remaining conclusion follows by using again $\Delta_\rho C=0$.
\end{proof}

When taken together, the differential identities thus obtained will turn 
out to suffice to deduce (by means of careful explicit computations) all 
the equality cases. Let us summarize this program by stating the 
key \textbf{dichotomy}:
\begin{enumerate}[$\bullet$]
\itemsep\abovedisplayskip
\item \textbf{Nondegenerate case:}
there exists $t\in(0,1)$ and $(x,y)\in\supp\mu_t$ such that
\begin{equation}
\label{eq:N}
\tag{N}
	\frac{\partial b_1(1-t,x,y)}{\partial x}+
	\frac{\partial b_1(1-t,x,y)}{\partial y} \ne
	\frac{\partial b_2(1-t,x,y)}{\partial x}+
	\frac{\partial b_2(1-t,x,y)}{\partial y}.
\end{equation}
In this case, Proposition \ref{prop:lie} will allow us to argue precisely
as in section \ref{sec:nondeg} that equality in the Ehrhard-Borell inequality
implies \eqref{eq:H1} or \eqref{eq:H2}.
\item \textbf{Degenerate case:} for all
$t\in(0,1)$ and $(x,y)\in\supp\mu_t$
\begin{equation}
\label{eq:D}
\tag{D}
\begin{aligned}
	&\frac{\partial b_1(1-t,x,y)}{\partial x}+
	\frac{\partial b_1(1-t,x,y)}{\partial y} =
	\frac{\partial b_2(1-t,x,y)}{\partial x}+
	\frac{\partial b_2(1-t,x,y)}{\partial y}, \\
	&\frac{\partial C(1-t,x,y)}{\partial x} =
	\frac{\partial C(1-t,x,y)}{\partial y} = 0,\\
	&\frac{\partial^2}{\partial x^2}C(1-t,x,y) =
	\frac{\partial^2}{\partial y^2}C(1-t,x,y) =
	-\frac{\partial^2}{\partial x\partial y}C(1-t,x,y).
\end{aligned}
\end{equation}
These identities place strong constraints on the possible behavior of the
functions $u_f,u_g,u_h$. The bulk of the analysis lies in this case: we
will show that these identities imply that one of the equality cases in 
Proposition \ref{prop:deg1d} must hold.
\end{enumerate}
\vskip.1cm

\subsubsection{The nondegenerate case}

This case is captured by the following lemma.

\begin{lem}
\label{lem:degndeg}
Suppose that the nondegenerate case \eqref{eq:N} is in force. If equality holds
in the Ehrhard-Borell inequality, then either
$$
	h(x) \eqa \Phi(ax + \lambda b+\mu c),\quad
	f(x) \eqa \Phi(ax + b),\quad
	g(x) \eqa \Phi(ax + c),
$$
or
$$
	h(x) \eqa 1_{ax + \lambda b+\mu c\ge 0},\quad
	f(x) \eqa 1_{ax + b\ge 0},\quad
	g(x) \eqa 1_{ax + c\ge 0},
$$
for some $a,b,c\in\mathbb{R}$.
\end{lem}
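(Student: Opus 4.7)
The plan mirrors the argument of Section \ref{sec:nondeg}, but the nondegenerate support of $(X_{t_0}, Y_{t_0})$ is now obtained only \emph{locally} through the Lie-algebraic criterion rather than globally via Girsanov. Let $t_0\in(0,1)$ and $(x_0,y_0)\in\supp\mu_{t_0}$ be a point at which the derivative inequality in \eqref{eq:N} holds. First I would combine Lemma \ref{lem:exlie} with Proposition \ref{prop:lie} to conclude that $\supp\mu_{t_0}$ contains a non-empty open set $U\subseteq\mathbb{R}^2$.

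Next I would show that $C(1-t_0,\cdot,\cdot)\equiv 0$ on $\mathbb{R}^2$. Arguing as at the start of the proof of Corollary \ref{cor:ndeq}, Theorem \ref{thm:be} applied to the shifted functions $z\mapsto f(x+\sqrt{s}z)$, etc., gives $C(1-s,\cdot,\cdot)\ge 0$ pointwise for every $s\in(0,1)$; on the other hand, the equality assumption combined with the Feynman-Kac representation in Lemma \ref{lem:fk} yields $C(1-s,X_s,Y_s)=0$ almost surely. Since $C(1-s,\cdot,\cdot)$ is continuous by Lemma \ref{lem:anal}, its (closed) zero set is a superset of $\supp\mu_s$. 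Specializing to $s=t_0$, this gives $C(1-t_0,\cdot,\cdot)=0$ on the open set $U$, and the real analyticity afforded by Lemma \ref{lem:anal} then upgrades this to $C(1-t_0,\cdot,\cdot)\equiv 0$ on all of $\mathbb{R}^2$.

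Expanded, this identity says that the smooth functions $u_f(1-t_0,\cdot)$, $u_g(1-t_0,\cdot)$, $u_h(1-t_0,\cdot)$ satisfy the hypothesis of the one-dimensional case of Lemma \ref{lem:eqcalc}; that lemma therefore produces $a,b,c\in\mathbb{R}$ with
$$
Q_{1-t_0}f(x)=\Phi(ax+b),\quad Q_{1-t_0}g(x)=\Phi(ax+c),\quad Q_{1-t_0}h(x)=\Phi(ax+\lambda b+\mu c).
$$
Applying Lemma \ref{lem:invert} to invert the heat semigroup at the single time $1-t_0$ then yields the claimed dichotomy: if $|a|=(1-t_0)^{-1/2}$, then $f,g,h$ are almost everywhere indicators of parallel half-lines with the prescribed intercepts (the second alternative); if $|a|<(1-t_0)^{-1/2}$, all three functions are almost everywhere of the form $\Phi$ of an affine function with common slope $a/\sqrt{1-(1-t_0)a^2}$ (the first alternative, after absorbing the normalization into the constants $b,c$).

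The main obstacle relative to Section \ref{sec:nondeg} is precisely the issue flagged in the remark following Lemma \ref{lem:invert}: the Lie-algebraic argument provides the linear structure of $Q_{1-t_0}f, Q_{1-t_0}g, Q_{1-t_0}h$ only at the \emph{single} time $1-t_0$, so the naive $t\downarrow 0$ limiting argument available in the globally nondegenerate regime is not. It is this point that makes the injectivity of the heat semigroup encoded in Lemma \ref{lem:invert} indispensable rather than a mere convenience, and that necessitates the route through analytic continuation from the open set $U$ instead of an approximation by smooth data.
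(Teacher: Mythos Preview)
Your proposal is correct and follows essentially the same route as the paper: use Lemma~\ref{lem:exlie} and Proposition~\ref{prop:lie} to obtain an open set in $\supp\mu_{t_0}$, combine the Feynman--Kac argument with continuity to get $C(1-t_0,\cdot,\cdot)=0$ on that set, invoke analyticity to extend to all of $\mathbb{R}^2$, and finish with Lemmas~\ref{lem:eqcalc} and~\ref{lem:invert}. Your added commentary about why the single-time inversion via Lemma~\ref{lem:invert} is genuinely needed here (rather than a $t\downarrow 0$ limit) is exactly the point the paper makes in the remark following that lemma.
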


\begin{proof}
Arguing exactly as in the proof of Corollary \ref{cor:ndeq}, we
have $C(1-t,x,y)=0$ for all $t\in(0,1)$ and $(x,y)\in\supp\mu_t$.
By Proposition \ref{prop:lie} and Lemma \ref{lem:exlie}, the nondegeneracy
assumption \eqref{eq:N} implies that $\supp\mu_t$ contains a non-empty open
set for some $t\in(0,1)$. For this $t$, the function $C(1-t,\cdot,\cdot)$
vanishes on a non-empty open set in $\mathbb{R}^2$, and as this function is
analytic by Lemma \ref{lem:anal} it must vanish everywhere on $\mathbb{R}^2$.
The proof is concluded by applying 
Lemmas \ref{lem:eqcalc} and \ref{lem:invert}.
\end{proof}

\subsubsection{The degenerate case}

This case is more involved. Let us begin by computing an explicit form
of the identities \eqref{eq:D} using the definitions of the functions
$b_1,b_2,C$. Remarkably, this computation results in a further dichotomy.

\begin{lem}
\label{lem:degdegd}
Suppose that the degenerate case \eqref{eq:D} is in force. If equality holds
in the Ehrhard-Borell inequality, then for every
$t\in(0,1)$ and $(x,y)\in\supp\mu_t$, either
\begin{equation}
\label{eq:D1}
\tag{D1}
	u_f(1-t,x)=u_g(1-t,y)
\end{equation}
or
\begin{equation}
\label{eq:D2}
\tag{D2}
	u_h''(1-t,\lambda x+\mu y) = u_f''(1-t,x) = u_g''(1-t,y) = 0,
\end{equation}
where $u_h''(t,x):=\frac{\partial^2}{\partial x^2}u_h(t,x)$ and
analogously for $u_f,u_g$.
\end{lem}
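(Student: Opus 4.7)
The plan is to substitute the explicit formulas for $b_1,b_2$ and $C$ into the three identities of \eqref{eq:D} and solve for the quantities $u_f,u_g,u_h$ and their derivatives at a point $(1-t,x,y)$ with $(x,y)\in\supp\mu_t$. Throughout, I will write $U_f := u_f(1-t,x)$, $U_g := u_g(1-t,y)$, and denote first and second spatial derivatives with primes, always evaluated at the same $(1-t,x)$, $(1-t,y)$, or $(1-t,\lambda x+\mu y)$ as appropriate.

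First I would exploit the first-order conditions. Since $C(1-t,x,y) = u_h(1-t,\lambda x+\mu y)-\lambda u_f(1-t,x)-\mu u_g(1-t,y)$, one has $\partial_x C = \lambda(u_h'-u_f')$ and $\partial_y C = \mu(u_h'-u_g')$. The vanishing of both partials at $(x,y)$, together with $\lambda,\mu>0$, yields
$$
u_h'(1-t,\lambda x+\mu y) = u_f'(1-t,x) = u_g'(1-t,y) =: \alpha.
$$
Next I would exploit the second-order conditions. A direct computation gives
$\partial_x^2 C = \lambda^2 u_h''-\lambda u_f''$, $\partial_y^2 C=\mu^2 u_h''-\mu u_g''$, and $\partial_x\partial_y C = \lambda\mu u_h''$. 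The equality $\partial_x^2 C = -\partial_x\partial_y C$ combined with $\lambda+\mu=1$ and $\lambda\ne 0$ gives $u_h'' = u_f''$; symmetrically $u_h'' = u_g''$. Hence
$$
u_h''(1-t,\lambda x+\mu y) = u_f''(1-t,x) = u_g''(1-t,y) =: \beta.
$$

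The key step then is to feed these identities into the first line of \eqref{eq:D}. Using the formulas for $b_1,b_2$ from Lemma~\ref{lem:para} in one dimension, and recalling that $u_f$ depends only on $x$, $u_g$ only on $y$, while $u_h$ depends on $\lambda x+\mu y$, one computes
\begin{align*}
\partial_x b_1+\partial_y b_1 &= -\tfrac{1}{2}\bigl[u_f'(u_h'+u_f') + U_f\bigl((\lambda+\mu)u_h''+u_f''\bigr)\bigr],\\
\partial_x b_2+\partial_y b_2 &= -\tfrac{1}{2}\bigl[u_g'(u_h'+u_g') + U_g\bigl((\lambda+\mu)u_h''+u_g''\bigr)\bigr],
\end{align*}
where I have already used $\lambda+\mu=1$ in spirit (it collapses the coefficient in front of $u_h''$ to $1$). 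Substituting the previously derived equalities $u_h'=u_f'=u_g'=\alpha$ and $u_h''=u_f''=u_g''=\beta$, the first line of \eqref{eq:D} reduces to
$$
2\alpha^2 + 2\beta U_f = 2\alpha^2 + 2\beta U_g,\qquad\text{i.e.,}\qquad \beta(U_f-U_g) = 0.
$$
This immediately yields the stated dichotomy: either $U_f=U_g$, which is \eqref{eq:D1}, or $\beta=0$, which together with $u_h''=u_f''=u_g''=\beta$ is \eqref{eq:D2}.

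I do not anticipate a serious obstacle beyond keeping the bookkeeping of the partial derivatives straight; the crucial simplification is that $\lambda+\mu=1$ causes the coefficient of $u_h''$ in $\partial_x b_i+\partial_y b_i$ to be exactly $1$, so that after substituting the first- and second-order optimality identities the residual equation becomes the clean factorization $\beta(U_f-U_g)=0$. The only subtlety worth noting is that we used only two of the three Hessian relations in \eqref{eq:D} (namely $\partial_x^2 C=-\partial_x\partial_y C$ and $\partial_y^2 C=-\partial_x\partial_y C$); the remaining identity $\partial_x^2 C=\partial_y^2 C$ is then automatic.
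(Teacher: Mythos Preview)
Your proposal is correct and follows essentially the same approach as the paper: substitute the explicit forms of $b$ and $C$ into the three groups of identities in \eqref{eq:D}, obtain $u_h'=u_f'=u_g'$ and $u_h''=u_f''=u_g''$ from the first- and second-order optimality conditions, and then use these to reduce the Lie-bracket identity to the factorization $(u_f-u_g)u_h''=0$. Your presentation is slightly more explicit about the intermediate computations (in particular the role of $\lambda+\mu=1$ in collapsing the $u_h''$ coefficients), but the argument is the same.
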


\begin{proof}
Substituting the definitions of $b(t,x,y)$ and $C(t,x,y)$
(cf.\ section \ref{sec:borell}) into the identities
contained in \eqref{eq:D} yields, respectively, that
(here $u_h'(t,x):=\frac{\partial}{\partial x}u_h(t,x)$)
\begin{align*}
	&u_f'(1-t,x)(u_h'(1-t,\lambda x+\mu y)+u_f'(1-t,x)) \\
	&\qquad\quad+
	u_f(1-t,x)(u_h''(1-t,\lambda x+\mu y)+u_f''(1-t,x)) =\mbox{}
	\\ &
	u_g'(1-t,y)(u_h'(1-t,\lambda x+\mu y)+u_g'(1-t,y)) \\
	&\qquad\quad+
	u_g(1-t,y)(u_h''(1-t,\lambda x+\mu y)+u_g''(1-t,y))
\end{align*}
and
\begin{align*}
	u_h'(1-t,\lambda x+\mu y) &=u_f'(1-t,x)=u_g'(1-t,y),\\
	u_h''(1-t,\lambda x+\mu y) &= u_f''(1-t,x) = u_g''(1-t,y)
\end{align*}
for every $t\in(0,1)$ and $(x,y)\in\supp\mu_t$. Combining these equations
gives
$$
	(u_f(1-t,x)-u_g(1-t,y)) u_h''(1-t,\lambda x+\mu y) = 0
$$
for every $t\in(0,1)$ and $(x,y)\in\supp\mu_t$. The conclusion follows.
\end{proof}

We will consider two separate situations:
\begin{enumerate}[$\bullet$]
\itemsep\abovedisplayskip
\item \textbf{Case 1:} \eqref{eq:D1} holds for all $t\in(0,1)$ and
$(x,y)\in\supp\mu_t$. In this case, we will show that the third equality
case of Proposition \ref{prop:deg1d} must hold.
\item \textbf{Case 2:} There exists $t\in(0,1)$ and $(x,y)\in\supp\mu_t$
such that \eqref{eq:D1} does not hold. In this case, we will obtain the same
equality cases as in Lemma \ref{lem:degndeg}.
\end{enumerate}
\vskip.1cm

Let us begin by analyzing the first case. 

\begin{lem}
\label{lem:concave}
Suppose that \eqref{eq:D1} holds for all $t\in(0,1)$ and
$(x,y)\in\supp\mu_t$. If equality holds in the Ehrhard-Borell inequality,
then we have
$$
	h(x)\eqa f(x)\eqa g(x)\eqa \Phi(V(x))
$$
for some concave function $V:\mathbb{R}\to\mathbb{\bar R}$.
\end{lem}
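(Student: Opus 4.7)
The plan is to exploit $\rho=1$ (which makes $W_t=B_t$ in Lemma~\ref{lem:fk}) together with \eqref{eq:D1} to collapse the support of $\mu_t$ onto the diagonal, and then to extract concavity via the Ehrhard-Borell inequality applied to translates. First, I would combine \eqref{eq:D1} with the first-order conditions $\partial_xC=\partial_yC=0$ from Lemma~\ref{lem:exmin} and the identity $C=0$ to conclude that on $\supp\mu_t$ one has $u_f(1-t,x)=u_g(1-t,y)$, $u_f'(1-t,x)=u_g'(1-t,y)=u_h'(1-t,\lambda x+\mu y)$, and (using $\lambda+\mu=1$) $u_h(1-t,\lambda x+\mu y)=u_f(1-t,x)$. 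Substituting into the explicit formulas for $b_1,b_2$ from Lemma~\ref{lem:para} yields $b_1=b_2$ on $\supp\mu_t$.

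Because $W_t=B_t$, the difference process satisfies $d(X_t-Y_t)=[b_1(1-t,X_t,Y_t)-b_2(1-t,X_t,Y_t)]\,dt$, which vanishes almost surely since $(X_t,Y_t)\in\supp\mu_t$ for each $t$. A Fubini argument combined with path continuity and $X_0=Y_0=0$ then forces $X_t=Y_t$ a.s.\ for every $t\in[0,1)$, so $\supp\mu_t\subseteq\{(x,x):x\in\mathbb{R}\}$. For the reverse inclusion, the reduced one-dimensional equation $dX_t=b_1(1-t,X_t,X_t)\,dt+dW_t$ has a smooth drift of linear growth on $[0,t_0]\times\mathbb{R}$ for any $t_0<1$ (exactly as in the proof of Lemma~\ref{lem:fk}); the Girsanov argument of Lemma~\ref{lem:girs}, applied to this scalar SDE, shows the law of $X_t$ is equivalent to Wiener measure and hence has support $\mathbb{R}$. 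Applying Lemma~\ref{lem:imgsupp} to $x\mapsto(x,x)$ gives $\supp\mu_t=\{(x,x):x\in\mathbb{R}\}$.

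Evaluating the identities of the first paragraph on the full diagonal yields $u_f(1-t,x)=u_g(1-t,x)=u_h(1-t,x)$ for every $x\in\mathbb{R}$ and $t\in(0,1)$, equivalently $Q_sf=Q_sg=Q_sh$ for all $s\in(0,1)$; injectivity of the heat semigroup (as established in the proof of Lemma~\ref{lem:invert}) then yields $f\eqa g\eqa h$. To upgrade this to concavity, I would apply Theorem~\ref{thm:be} to the translated functions $z\mapsto f(x+\sqrt{t}z)$, $z\mapsto f(y+\sqrt{t}z)$, and $z\mapsto f(\lambda x+\mu y+\sqrt{t}z)$, for which assumption~\eqref{eq:B} is inherited from our a.e.\ hypothesis by the translation trick used in Corollary~\ref{cor:ndeq}. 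This produces $u_f(t,\lambda x+\mu y)\ge\lambda u_f(t,x)+\mu u_f(t,y)$ for every $x,y\in\mathbb{R}$ and $t>0$, and continuity of $u_f(t,\cdot)$ upgrades $\lambda$-midpoint concavity to full concavity of each $u_f(t,\cdot)$. Letting $t\downarrow 0$, Lebesgue differentiation gives $u_f(t,x)\to\Phi^{-1}(f(x))=:V(x)$ for a.e.\ $x$, and a pointwise a.e.\ limit of concave functions on $\mathbb{R}$ agrees a.e.\ with a concave function, producing the required $V$.

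I expect the main obstacle to be the rigidity step in the second paragraph: translating the pointwise-on-support identity $b_1=b_2$ into the global statement that $\supp\mu_t$ equals the entire diagonal. This requires simultaneously the noise-collapse coming from $W_t=B_t$ (to confine the support to the diagonal) and a Girsanov-type argument (to ensure the surviving one-dimensional marginal still has full support); without both, the identities $u_f=u_g=u_h$ are obtained only on a potentially small subset of $\mathbb{R}$, which would be insufficient to recover $f\eqa g\eqa h$.
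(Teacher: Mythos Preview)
Your proposal is correct and follows essentially the same approach as the paper: collapse the support onto the diagonal via $b_1=b_2$ and $W_t=B_t$, recover full one-dimensional support by Girsanov, deduce $Q_sf=Q_sg=Q_sh$ and invert the heat semigroup, then extract concavity from $C\ge 0$ in the limit $t\downarrow 0$. Your deduction of $u_h=u_f$ directly from $C(1-t,x,x)=0$ on the diagonal is a slight shortcut over the paper's integral-equality argument, and your final claim (an a.e.\ limit of concave functions agrees a.e.\ with a concave function) is made precise in the paper by defining $V:=\liminf_{t\downarrow 0}u_f(t,\cdot)$ pointwise everywhere.
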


\begin{proof}
The assumption states that
$$
	u_f(1-t,x)=u_g(1-t,y),\qquad
	u_h'(1-t,\lambda x+\mu y)=u_f'(1-t,x)=u_g'(1-t,y)
$$
for every $t\in(0,1)$ and $(x,y)\in\supp\mu_t$, where the second identity
was shown in the proof of Lemma \ref{lem:degdegd}. In particular, this
implies that
$$
	u_f(1-t,X_t)=u_g(1-t,Y_t),~~~
	u_h'(1-t,\lambda X_t+\mu Y_t) = 
	u_f'(1-t,X_t)=u_g'(1-t,Y_t)~~~
	\mbox{a.s.}
$$
for every $t\in(0,1)$. Therefore, the definition of $b$ in
Lemma \ref{lem:para} yields
$$
	b_1(1-t,X_t,Y_t) = b_2(1-t,X_t,Y_t)\quad\mbox{a.s.}
$$
Thus the equation for $X_t,Y_t$ 
at the beginning of section \ref{sec:lie} and $X_0=Y_0=0$ yield
$$
	X_t=Y_t\quad\mbox{a.s.}\quad\mbox{for every }t\in [0,1),
$$
so that $\supp\mu_t\subseteq\{(x,x):x\in\mathbb{R}\}$. Using Lemma
\ref{lem:imgsupp}, we can now conclude that
$\supp\mu_t=\{(x,x):x\in\supp\mu_t^X\}$ where $\mu_t^X$
denotes the law of $X_t$.

In fact, we claim that $\supp\mu_t^X=\mathbb{R}$, so that
$$
	\supp\mu_t = \{(x,x):x\in\mathbb{R}\}.
$$
To see why, note that by the same reasoning as above, we have
$b_1(1-t,X_t,Y_t)=-u_f(1-t,X_t)u_f'(1-t,X_t)$ a.s., so that $X_t$ satisfies
the equation
$$
	dX_t = -u_f(1-t,X_t)u_f'(1-t,X_t)\,dt + dW_t.
$$
The claim follows by the identical argument as in the proof of
Lemma \ref{lem:girs}.

With the characterization of $\supp\mu_t$ in hand, it follows immediately
using the definitions of $u_f,u_g$ that \eqref{eq:D1} implies 
$Q_{1-t}f = Q_{1-t}g$ for all $t\in(0,1)$.
Now recall from the proof of 
Corollary \ref{cor:ndeq} that $C\ge 0$ everywhere, which implies in the present 
case that $C(1-t,x,x)=\Phi^{-1}(Q_{1-t}h(x))-\Phi^{-1}(Q_{1-t}f(x))\ge 0$.
On the other hand, using equality in the
Ehrhard-Borell inequality we obtain
\begin{align*}
	\Phi^{-1}\bigg(\int Q_{1-t}h(\sqrt{t}x)\,\gamma_1(dx)\bigg) &=
	\Phi^{-1}\bigg(
	\int h\,d\gamma_1\bigg) \\ &=
	\lambda
	\Phi^{-1}\bigg(
	\int f\,d\gamma_1\bigg) +
	\mu
	\Phi^{-1}\bigg(
	\int g\,d\gamma_1\bigg) \\ &=
	\Phi^{-1}\bigg(
	\int Q_{1-t}f(\sqrt{t}x)\,\gamma_1(dx)
	\bigg),
\end{align*}
where we used $Q_{1-t}f=Q_{1-t}g$ in the last line. Thus we have shown
$$
	Q_{1-t}h\ge Q_{1-t}f,\qquad
	\int Q_{1-t}h(\sqrt{t}x)\,\gamma_1(dx) =	
	\int Q_{1-t}f(\sqrt{t}x)\,\gamma_1(dx).
$$
It follows that
$Q_{1-t}h\eqa Q_{1-t}f$, and as these functions are smooth we have shown
$$
	Q_{1-t}h=Q_{1-t}f=Q_{1-t}g\quad\mbox{for all }t\in(0,1).
$$
Thus
$$
	h(x)\eqa f(x)\eqa g(x)
$$
follows by injectivity of the heat semigroup as in
the proof of Lemma \ref{lem:invert}.

It remains to argue that $f(x)\eqa\Phi(V(x))$ for some concave function
$V:\mathbb{R}\to\mathbb{\bar R}$. To this end, we use again
$C\ge 0$ and $Q_{1-t}h=Q_{1-t}f=Q_{1-t}g$ to conclude that
$$
	\Phi^{-1}(Q_{1-t}f(\lambda x+\mu y)) \ge
	\lambda\Phi^{-1}(Q_{1-t}f(x))+
	\mu\Phi^{-1}(Q_{1-t}f(y))
$$
for all $t\in(0,1)$ and $x,y\in\mathbb{R}$. As $Q_{1-t}f$ is smooth and
$\lambda+\mu=1$, this implies that the function $\Phi^{-1}(Q_{1-t}f)$ 
is concave for every $t\in(0,1)$. We claim that
$$
	V(x) := \liminf_{t\to 1} \Phi^{-1}(Q_{1-t}f(x))
$$
satisfies the requisite properties. Indeed, using that $Q_{1-t}f(x)\to f(x)$
a.e.\ as $t\to 1$ \cite[Theorem 8.15]{Fol99}, we clearly have
$f(x)\eqa \Phi(V(x))$. On the other hand,
\begin{align*}
	V(\alpha x+(1-\alpha) y) &=
	\liminf_{t\to 1} \Phi^{-1}(Q_{1-t}f(\alpha x+(1-\alpha)y)) \\
	&\ge
	\liminf_{t\to 1} \{\alpha \Phi^{-1}(Q_{1-t}f(x)) +
	(1-\alpha)\Phi^{-1}(Q_{1-t}f(y))\} \\
	&\ge \alpha V(x) + (1-\alpha)V(y)
\end{align*}
for every $\alpha\in[0,1]$ and $x,y\in\mathbb{R}$, where we used that
$\Phi^{-1}(Q_{1-t}f)$ is concave for $t\in(0,1)$. Thus $V$ is concave,
and we have completed the proof.
\end{proof}

We finally tackle the last remaining case. Before we turn to the proof, we 
record an elementary property of real analytic functions on 
$\mathbb{R}$ that will be used below.

\begin{lem}
\label{lem:accum}
Let $u:\mathbb{R}\to\mathbb{R}$ be real analytic, and let $(x_k)_{k\ge 0}
\subset\mathbb{R}$ satisfy $x_k\to x$ and $x_k\ne x$ for all $k$.
If $u(x_k)=0$ for all $k$, then $u(z)=0$ for all $z\in\mathbb{R}$.
\end{lem}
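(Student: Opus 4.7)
The plan is to use the classical identity theorem for real analytic functions, which I would prove by first obtaining vanishing locally at $x$ and then propagating by connectedness.

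First I would work locally at the limit point $x$. Since $u$ is real analytic, on some open interval $I \ni x$ we have the convergent power series $u(z) = \sum_{n \ge 0} a_n (z-x)^n$. By continuity, $u(x) = \lim_k u(x_k) = 0$, so $a_0 = 0$. The main step is to show $a_n = 0$ for every $n$. I would argue by contradiction: if $N$ is the smallest index with $a_N \ne 0$, then on $I$ we can factor
$$u(z) = (z-x)^N g(z), \qquad g(z) := \sum_{n \ge 0} a_{N+n}(z-x)^n,$$
where $g$ is analytic on $I$ with $g(x) = a_N \ne 0$. By continuity of $g$, there is a neighborhood of $x$ on which $g$ does not vanish. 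For all sufficiently large $k$, the point $x_k$ lies in this neighborhood and satisfies $x_k \ne x$, so both factors $(x_k - x)^N$ and $g(x_k)$ are nonzero. This contradicts $u(x_k) = 0$. Hence every $a_n = 0$, so $u$ vanishes identically on $I$.

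Next I would propagate this local vanishing to all of $\mathbb{R}$ using connectedness. Define
$$Z := \{z \in \mathbb{R} : u \equiv 0 \text{ on some open neighborhood of } z\}.$$
By construction $Z$ is open, and we have just shown $x \in Z$, so $Z$ is nonempty. To see $Z$ is closed, let $z$ belong to the closure of $Z$; choose a sequence $z_k \in Z$ with $z_k \to z$ and $z_k \ne z$ (this is possible: either $z \in Z$ already, or else we can take $z_k \in Z$ distinct from $z$). Each $z_k$ has an entire neighborhood on which $u$ vanishes, so there exist points $x_k'$ arbitrarily close to $z$, distinct from $z$, with $u(x_k') = 0$. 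Applying the local argument above with $z$ playing the role of $x$ shows $u$ vanishes on a neighborhood of $z$, i.e., $z \in Z$. Thus $Z$ is clopen and nonempty in the connected space $\mathbb{R}$, so $Z = \mathbb{R}$ and $u \equiv 0$ on $\mathbb{R}$.

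I do not anticipate any serious obstacle here; the only subtle point is the closedness of $Z$, where one must be careful to produce a sequence of distinct zeros accumulating at the candidate point $z$ in order to reapply the local vanishing argument. Everything else is a direct invocation of the power series expansion guaranteed by real analyticity.
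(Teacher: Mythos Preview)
Your proof is correct and follows essentially the same approach as the paper: show that all Taylor coefficients of $u$ at $x$ vanish, hence $u\equiv 0$ near $x$, then conclude globally. The paper's local argument is a direct induction (dividing $u(x_k)$ by $(x_k-x)^n$ and letting $k\to\infty$ to get $a_n=0$), whereas you argue by contradiction via the factorization $u(z)=(z-x)^N g(z)$; these are equivalent. You are actually more careful than the paper, which stops once all $a_n=0$ and does not spell out the connectedness propagation to all of $\mathbb{R}$.
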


\begin{proof}
Write $u(z)=\sum_{n\ge 0}a_n(z-x)^n$ as an absolutely convergent power series.
Clearly $a_0=0$. Now suppose we have shown $a_0,\ldots,a_{n-1}=0$. Then
$$
	0 = \frac{u(x_k)}{(x_k-x)^n} =
	\sum_{m\ge n} a_m (x_k-x)^{m-n} \to
	a_n\quad\mbox{as }k\to\infty.
$$
Thus the conclusion follows by induction.
\end{proof}

We are now ready to complete the proof of Proposition \ref{prop:deg1d}.

\begin{lem}
Suppose there exist $t\in(0,1)$ and $(x,y)\in\supp\mu_t$ 
such that \eqref{eq:D1} does not hold. If equality holds in the Ehrhard-Borell 
inequality, then either
$$
	h(x) \eqa \Phi(ax + \lambda b+\mu c),\quad
	f(x) \eqa \Phi(ax + b),\quad
	g(x) \eqa \Phi(ax + c),
$$
for all $x\in\mathbb{R}$, or
$$
	h(x) \eqa 1_{ax + \lambda b+\mu c\ge 0},\quad
	f(x) \eqa 1_{ax + b\ge 0},\quad
	g(x) \eqa 1_{ax + c\ge 0},
$$
for all $x\in\mathbb{R}$, for some $a,b,c\in\mathbb{R}$.
\end{lem}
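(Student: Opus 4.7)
The plan is to deduce that each of $u_f(1-t_*,\cdot)$, $u_g(1-t_*,\cdot)$, $u_h(1-t_*,\cdot)$ is affine with a common slope, and then invert the heat semigroup via Lemma~\ref{lem:invert} to recover $f,g,h$.

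First I would exploit the pointwise information. By hypothesis there exist $t_* \in (0,1)$ and $(x_*,y_*) \in \supp\mu_{t_*}$ with $u_f(1-t_*,x_*) \ne u_g(1-t_*,y_*)$. Since $u_f$ and $u_g$ are analytic in space (Lemma~\ref{lem:anal}), the open set $U := \{(x,y) \in \mathbb{R}^2 : u_f(1-t_*,x) \ne u_g(1-t_*,y)\}$ is a neighborhood of $(x_*,y_*)$. On $U \cap \supp\mu_{t_*}$ the alternative \eqref{eq:D1} of Lemma~\ref{lem:degdegd} is impossible, so \eqref{eq:D2} must hold, giving $u_f''(1-t_*,x) = u_g''(1-t_*,y) = u_h''(1-t_*,\lambda x + \mu y) = 0$ there. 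I also retain, on all of $\supp\mu_{t_*}$, the first-order relations $u_f'(1-t_*,x) = u_g'(1-t_*,y) = u_h'(1-t_*,\lambda x + \mu y)$, which were derived from Lemma~\ref{lem:exmin} in the proof of Lemma~\ref{lem:degdegd}.

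The core step is to produce a sequence $(x_n,y_n) \in \supp\mu_{t_*} \cap U$ converging to $(x_*,y_*)$ with $x_n \ne x_*$, $y_n \ne y_*$, and $\lambda x_n + \mu y_n \ne \lambda x_* + \mu y_*$. For this I would invoke Lemma~\ref{lem:supp}: choose $h_* \in \mathcal{C}_0([0,t_*],\mathbb{R})$ whose controlled endpoint is close to $(x_*,y_*)$, and consider the family $h_* + \varepsilon\psi$, where $\psi \in \mathcal{C}_0$ satisfies $\dot\psi \equiv 1$ on a short interval ending at $t_*$ and $\dot\psi \equiv 0$ elsewhere. The variational equation for the controlled ODE in Lemma~\ref{lem:supp} is driven by $\dot\psi$ in both coordinates, so the endpoint moves to leading order in the direction $(1,1)$. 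For sufficiently small $\varepsilon \ne 0$ the new endpoint lies in $\supp\mu_{t_*} \cap U$ and has all three of the coordinates $x$, $y$, and $\lambda x + \mu y$ strictly different from those of $(x_*,y_*)$ (the last one because $\lambda + \mu = 1 \ne 0$).

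I would then apply Lemma~\ref{lem:accum} to the analytic one-variable function $x \mapsto u_f''(1-t_*,x)$, which vanishes at the accumulating sequence $x_n \to x_*$: this forces $u_f''(1-t_*,\cdot) \equiv 0$, so $u_f(1-t_*,x) = ax+b$ for some $a,b \in \mathbb{R}$. The analogous application to $u_g''$ and $u_h''$ yields $u_g(1-t_*,y) = a'y + c$ and $u_h(1-t_*,z) = a''z + d$. The first-order identity at $(x_*,y_*)$ forces $a = a' = a''$, and the equality $C(1-t_*,x_*,y_*) = 0$ gives $d = \lambda b + \mu c$. Finally, Lemma~\ref{lem:invert} applied to $Q_{1-t_*}f(x) = \Phi(ax+b)$ (and likewise for $g$ and $h$) yields the dichotomy: when $|a| < (1-t_*)^{-1/2}$ the three functions are a.e.\ of the \eqref{eq:H1} form with a common rescaled slope and linearly related intercepts, and when $|a| = (1-t_*)^{-1/2}$ the \eqref{eq:H2} indicator form results.

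I expect the main obstacle to be the middle step, namely producing genuine variation of $\supp\mu_{t_*}$ in all three relevant directions near $(x_*,y_*)$. Because $\rho = 1$, the support is at most one-dimensional on any set satisfying \eqref{eq:D} (the Lie algebra computed in Section~\ref{sec:lie} has rank at most two there), so we have only one available perturbation direction; the saving point is that this diffusion direction $(1,1)$ is nondegenerate for each of $x$, $y$, and $\lambda x + \mu y$ under $\lambda,\mu > 0$. A cleaner alternative to the variational analysis is to apply Girsanov's theorem to each of the one-dimensional SDEs governing $X_t$, $Y_t$, and $\lambda X_t + \mu Y_t$ (all of which inherit a nondegenerate $dW_t$ term) to conclude that each marginal has full support on $\mathbb{R}$, and then to combine density of the projections $\pi_x(\supp\mu_{t_*})$, $\pi_y(\supp\mu_{t_*})$, and $\pi_{\lambda x + \mu y}(\supp\mu_{t_*})$ with the first-order identity and analyticity.
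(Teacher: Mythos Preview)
Your approach is essentially the paper's: produce an accumulating sequence in $\supp\mu_{t_*}$ on which \eqref{eq:D2} holds, invoke Lemma~\ref{lem:accum} and analyticity to make $u_f'',u_g'',u_h''$ vanish identically at time $1-t_*$, match the slopes via the first-order identity, and finish with Lemma~\ref{lem:invert}.

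For the step you flag as the main obstacle, the paper uses exactly your ``cleaner alternative'' rather than the control-theoretic perturbation. It argues by contradiction: if no such sequence existed, some neighborhood $V$ of $(x_*,y_*)$ would satisfy $\supp\mu_{t_*}\cap V\subseteq\{x=x_*\}\cup\{y=y_*\}\cup\{\lambda x+\mu y=\lambda x_*+\mu y_*\}$, hence $\mathbf{P}[X_{t_*}=x_*\text{ or }Y_{t_*}=y_*\text{ or }\lambda X_{t_*}+\mu Y_{t_*}=\lambda x_*+\mu y_*]\ge\mu_{t_*}(V)>0$; but each of $X_{t_*}$, $Y_{t_*}$, $\lambda X_{t_*}+\mu Y_{t_*}$ is a one-dimensional Brownian motion plus drift and hence has an absolutely continuous law, giving the contradiction. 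Your variational argument would also work (the direction $(1,1)$ is indeed transverse to all three lines since $\lambda,\mu>0$ and $\lambda+\mu\ne 0$), but it is more laborious to make precise. One minor difference: the paper fixes $d=\lambda b+\mu c$ at the very end by substituting the candidate forms into the equality in the Ehrhard--Borell inequality, whereas your use of $C(1-t_*,x_*,y_*)=0$ is equally valid and slightly more direct.
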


\begin{proof}
Fix throughout the proof $t\in(0,1)$ and $(x,y)\in\supp\mu_t$ at which
\eqref{eq:D1} fails. We begin by showing that there exists a sequence
of points $(x_k,y_k)\in\supp\mu_t$ such that $x_k\to x$, $y_k\to y$,
and $x_k\ne x$, $y_k\ne y$, $\lambda x_k+\mu y_k\ne \lambda x+\mu y$
for all $k$. Indeed, suppose this is false.
Then there exists a neighborhood $V$ of $(x,y)$ such that every point
$(x',y')\in V$ satisfies either $x'=x$, or $y'=y$, or $\lambda x'+\mu y'=
\lambda x+\mu y$. In particular, as
$\mu_t(V)>0$ by definition, this would imply
$$
	\mathbf{P}[X_t=x\mbox{ or }Y_t=y\mbox{ or }
	\lambda X_t+\mu Y_t=\lambda x+\mu y
	|(X_t,Y_t)\in V]
	\stackrel{?}{=}1.
$$
However, the marginal laws of the random variables $X_t$, $Y_t$, and 
$\lambda X_t+\mu Y_t$ are all absolutely continuous with respect to 
Lebesgue measure, as each can be written by their definition in Lemma 
\ref{lem:fk} as a standard one-dimensional Brownian motion plus drift 
\cite[Theorem 7.2]{LS01}. It follows that $\mathbf{P}[X_t=x\mbox{ or 
}Y_t\mbox{ or }\lambda X_t+\mu Y_t=\lambda x+\mu y]=0$, and we 
have therefore obtained the desired contradiction.

Now fix a sequence $(x_k,y_k)\in\supp\mu_t$ with the above property.
If \eqref{eq:D1} were to hold infinitely often on this sequence, then
by continuity \eqref{eq:D1} must hold at $(x,y)$ which contradicts the
assumption. Thus Lemma \ref{lem:degdegd} ensures that \eqref{eq:D2} holds
eventually on this sequence. But this implies in particular, using
Lemma \ref{lem:accum}, that
$$
	u_h''(1-t,z)=
	u_f''(1-t,z)=u_g''(1-t,z)=0
$$
for all $z\in\mathbb{R}$, where we used that $u_f,u_g,u_h$ are
analytic (Lemma \ref{lem:anal}). Therefore
$$
	Q_th(z) = \Phi(a_1z+b_1),\qquad
	Q_tf(z) = \Phi(a_2z+b_2),\qquad
	Q_tg(z) = \Phi(a_3z+b_3)
$$
for some $a_1,a_2,a_3,b_1,b_2,b_3\in\mathbb{R}$.
But we have shown in the proof of Lemma
\ref{lem:degdegd} that 
$u_h'(1-t,\lambda x+\mu y)=u_f'(1-t,x)=u_g'(1-t,y)$ for $(x,y)\in\supp\mu_t$,
so that we must have $a_1=a_2=a_3$. Applying Lemma \ref{lem:invert}, we
find that either
$$
	h(x) \eqa \Phi(ax + d),\quad
	f(x) \eqa \Phi(ax + b),\quad
	g(x) \eqa \Phi(ax + c),
$$
for all $x\in\mathbb{R}$, or
$$
	h(x) \eqa 1_{ax + d\ge 0},\quad
	f(x) \eqa 1_{ax + b\ge 0},\quad
	g(x) \eqa 1_{ax + c\ge 0},
$$
for all $x\in\mathbb{R}$, for some $a,b,c,d\in\mathbb{R}$. It is now readily
verified by explicit computation as in the proof of sufficiency in 
Theorem \ref{thm:main} (see section \ref{sec:main}) that equality in
the Ehrhard-Borell inequality can only hold if $d=\lambda b+\mu c$,
completing the proof.
\end{proof}

\begin{rem}
The key idea behind the proof of Proposition \ref{prop:deg1d} was the 
dichotomy between \eqref{eq:N} and \eqref{eq:D}: we showed that the only 
possible equality cases in the nondegenerate case \eqref{eq:N} are 
\eqref{eq:H1} and \eqref{eq:H2}, while all three equality cases can 
appear in the degenerate case \eqref{eq:D}. It is interesting to note, 
however, that if \eqref{eq:H1} or \eqref{eq:H2} hold, the support of 
$(X_t,Y_t)$ must necessarily be degenerate under the assumption 
$\lambda+\mu=1$ of this section: by computing the explicit form of 
$b(t,x,y)$ and substituting into the differential equation of Lemma 
\ref{lem:fk}, it is readily verified that $X_t-Y_t$ is nonrandom and 
thus the support of $(X_t,Y_t)$ is contained in a (time-dependent) 
hyperplane. Thus it turns out \emph{a posteriori} that the nondegenerate 
situation \eqref{eq:N} can never occur when $\lambda+\mu=1$ and
there is equality in the Ehrhard-Borell inequality.
\end{rem}

\section{Induction on the dimension}
\label{sec:tech}

In the previous section, we settled the equality cases of the 
Ehrhard-Borell inequality on the real line $\mathbb{R}$ in the 
degenerate case $\lambda+\mu=1$. The restriction to the one-dimensional 
case considerably simplified the computations needed in the proof. The 
aim of the present section is to extend the main result of the previous 
section to any dimension $n$. That is, we will prove the following:

\begin{prop}
\label{prop:degnd}
Let $\lambda,\mu>0$ satisfy $\lambda+\mu=1$.
Let $f,g,h:\mathbb{R}^n\to[0,1]$ be nontrivial measurable
functions satisfying 
$$
	\Phi^{-1}(h(\lambda x+\mu y))
	\gea \lambda \Phi^{-1}(f(x)) + \mu \Phi^{-1}(g(y)).
$$
If equality holds in the Ehrhard-Borell inequality
$$
	\Phi^{-1}\bigg(\int h\, d\gamma_n\bigg) =
	\lambda\Phi^{-1}\bigg(\int f\, d\gamma_n\bigg) +
	\mu\Phi^{-1}\bigg(\int g\, d\gamma_n\bigg),
$$
then \textbf{either}
$$
	h(x) \eqa \Phi(\langle a,x\rangle + \lambda b+\mu c),\quad
	f(x) \eqa \Phi(\langle a,x\rangle + b),\quad
	g(x) \eqa \Phi(\langle a,x\rangle + c)
$$
for some $a\in\mathbb{R}^n$ and $b,c\in\mathbb{R}$, \textbf{or}
$$
	h(x) \eqa 1_{\langle a,x\rangle + \lambda b+\mu c\ge 0},\quad
	f(x) \eqa 1_{\langle a,x\rangle + b\ge 0},\quad
	g(x) \eqa 1_{\langle a,x\rangle + c\ge 0}
$$
for some $a\in\mathbb{R}^n$ and $b,c\in\mathbb{R}$, \textbf{or}
$$
	h(x) \eqa f(x) \eqa g(x) \eqa \Phi(V(x))
$$
for some concave function $V:\mathbb{R}^n\to\mathbb{\bar R}$.
\end{prop}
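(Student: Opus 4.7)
The plan is to proceed by induction on the dimension $n$, with base case $n=1$ given by Proposition~\ref{prop:deg1d}. For the inductive step, I would slice $\mathbb{R}^n$ along an arbitrary direction $v\in S^{n-1}$, apply the $(n-1)$-dimensional Ehrhard--Borell inequality in $v^\perp$ and Proposition~\ref{prop:deg1d} to the one-dimensional marginals along $v$, and then exploit the resulting dichotomy as $v$ varies over $S^{n-1}$.

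Concretely, fix $v$ and write $x=sv+w$ with $s\in\mathbb{R}$, $w\in v^\perp$. For $s,t\in\mathbb{R}$ form the slice triple $f_s(w):=f(sv+w)$, $g_t(w):=g(tv+w)$, $h_u(w):=h(uv+w)$ with $u:=\lambda s+\mu t$, and the marginals $\tilde f_v(s):=\int f_s\,d\gamma_{n-1}$, analogously $\tilde g_v,\tilde h_v$. By Fubini, hypothesis~\eqref{eq:B} on $(f,g,h)$ yields the $(n-1)$-dim Ehrhard--Borell hypothesis for a.e.\ slice triple $(f_s,g_t,h_u)$; applying Theorem~\ref{thm:be} in dimension $n-1$ then gives the $1$D Ehrhard--Borell hypothesis for $(\tilde f_v,\tilde g_v,\tilde h_v)$; and a further Fubini identifies equality in $n$-dim Ehrhard--Borell for $(f,g,h)$ with equality in $1$D Ehrhard--Borell for the marginals. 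Proposition~\ref{prop:deg1d} thus places $(\tilde f_v,\tilde g_v,\tilde h_v)$ into one of three cases: (I) affine Gaussian with common slope, (II) matching half-space indicators, or (III) $\tilde f_v\eqa\tilde g_v\eqa\tilde h_v\eqa\Phi(\phi_v)$ for some concave $\phi_v$.

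The inductive step now splits into two scenarios. If for some $v$ we are in case~(I) or~(II), then in~(I) the affineness of $\Phi^{-1}\tilde f_v,\Phi^{-1}\tilde g_v,\Phi^{-1}\tilde h_v$ with matching slope $\alpha$ forces equality a.e.\ in the slice-wise $(n-1)$-dim Ehrhard--Borell (via $\alpha(\lambda s+\mu t)=\lambda(\alpha s)+\mu(\alpha t)$), and the inductive hypothesis classifies each slice triple into one of the three $(n-1)$-dim equality cases. Since the slice parameters (a common normal $a^*\in v^\perp$ and offsets $b_f(s),c_g(t),d_h(u)$) depend only on the coordinate of the function they describe, the identity $d_h(\lambda s+\mu t)=\lambda b_f(s)+\mu c_g(t)$ together with Cauchy's functional equation forces $b_f,c_g,d_h$ to be affine, and reassembling with the $v$-direction structure yields \eqref{eq:H1}, \eqref{eq:H2}, or (in a degenerate sub-case of zero slope combined with slice-wise case~(C)) the common concave conclusion. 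Case~(II) is handled similarly: the marginal indicators force the slices themselves to be a.e.\ trivial, whence $f,g,h$ are indicators of half-spaces with normal parallel to $v$, i.e.\ \eqref{eq:H2}. If instead case~(III) holds for \emph{every} $v\in S^{n-1}$, then $\tilde f_v\eqa\tilde g_v\eqa\tilde h_v$ for all $v$; Fubini gives $\int\langle v,x\rangle^k(f-g)\,d\gamma_n=\int s^k(\tilde f_v-\tilde g_v)\,d\gamma_1=0$ for all $v\in S^{n-1}$ and all $k\ge 0$, so $f-g$ is $L^2(\gamma_n)$-orthogonal to the powers of linear forms. Since these span the polynomial ring on $\mathbb{R}^n$ and polynomials are dense in $L^2(\gamma_n)$, we conclude $f\eqa g\eqa h$; writing the common value as $\Phi(V)$, hypothesis~\eqref{eq:B} becomes $V(\lambda x+\mu y)\gea\lambda V(x)+\mu V(y)$, which since $\lambda+\mu=1$ forces $V$ to be a.e.\ concave on $\mathbb{R}^n$.

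The main obstacle lies in the algebraic synthesis in the first scenario: once the inductive hypothesis classifies each slice triple, one must track carefully how the parameters assemble across the $v$-coordinate into the global forms \eqref{eq:H1}, \eqref{eq:H2}, or the concave conclusion, handling the possibility that different slices $(s,t)$ fall into different $(n-1)$-dim sub-cases (e.g.\ a slice in sub-case~(A) and another in sub-case~(C) forces an overlap that must be reconciled). This requires a delicate case analysis and repeated application of Cauchy's identity along the $v$-direction. The $L^2(\gamma_n)$-density argument in the second scenario is a secondary technical point, resolved by standard facts about Hermite polynomials and Waring-type representations.
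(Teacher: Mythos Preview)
Your inductive strategy is broadly the same as the paper's, but there are two real differences worth noting. First, the paper does \emph{not} attempt the induction directly on measurable $f,g,h$; it first applies the heat semigroup $Q_t$ to reduce to smooth functions taking values in $(0,1)$ (Proposition~\ref{prop:degndsm}), carries out the induction in that smooth category, and only afterwards inverts $Q_t$ via Lemma~\ref{lem:invert} to recover the general statement. This regularization is precisely what tames your ``main obstacle'': once everything is smooth, the slice parameters $a^*_{z_1,z_2},b_{z_1,z_2},c_{z_1,z_2}$ vary continuously, the indicator sub-case~(B) disappears entirely, and the mixing of sub-cases is handled by a short continuity argument (the concave sub-case can only occur on the line $\{\hat f(z_1)=\hat g(z_2)\}$, and one approximates by points off that line where the affine sub-case holds; see Lemma~\ref{lem:i1}). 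Your direct approach without regularization forces you to work with parameters defined only a.e.\ and to rule out by hand that different slices fall into incompatible sub-cases; this can probably be pushed through (the Pexider-type identity for measurable $b_f,c_g,d_h$ does give affineness, and the coupling of $f_s,g_t,h_u$ within each slice prevents type-mixing), but it is substantially messier than you indicate, and the paper's regularization trick is the cleaner route.

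Second, your Scenario~2 is a genuinely different argument from the paper's. The paper fixes a \emph{single} coordinate direction and, in the case $\hat f=\hat g=\hat h$, applies the $(n-1)$-dimensional induction hypothesis to the \emph{diagonal} slice triples $(f_z,g_z,h_z)$ to conclude $f_z=g_z=h_z$ for each $z$. You instead vary $v$ over all of $S^{n-1}$, obtain $\tilde f_v\eqa\tilde g_v\eqa\tilde h_v$ for every $v$, and then use that powers of linear forms span polynomials (hence are dense in $L^2(\gamma_n)$) to deduce $f\eqa g\eqa h$ directly. This is a nice alternative that bypasses the $(n-1)$-dimensional hypothesis in this branch; note however that the final concavity step (from $V(\lambda x+\mu y)\gea\lambda V(x)+\mu V(y)$ for a \emph{single} $\lambda\in(0,1)$ to a.e.\ concavity of $V$) still needs an argument, and the paper handles this too via the heat semigroup (showing $\Phi^{-1}(Q_t f)$ is concave for each $t>0$ and passing to the limit).
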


We will not give an independent proof of the $n$-dimensional case, but 
rather show that we can reduce this case to the one-dimensional case by 
induction on the dimension. In the induction step, it will be convenient 
to assume that the functions $f,g,h$ are smooth and take values in 
$(0,1)$. We therefore first prove the result under this regularity 
assumption in the following subsection, and then conclude the proof of 
Proposition \ref{prop:degnd} at the end of this section by a 
regularization argument.

\subsection{The regular case}

Our aim is to prove the following result.

\begin{prop}
\label{prop:degndsm}
Let $\lambda,\mu>0$ satisfy $\lambda+\mu=1$. Let
$f,g,h:\mathbb{R}^n\to(0,1)$ be smooth functions, and assume that
for every $x,y$
$$
	\Phi^{-1}(h(\lambda x+\mu y))
	\ge \lambda \Phi^{-1}(f(x)) + \mu \Phi^{-1}(g(y)).
$$
If equality holds in the Ehrhard-Borell inequality
$$
	\Phi^{-1}\bigg(\int h\, d\gamma_n\bigg) =
	\lambda\Phi^{-1}\bigg(\int f\, d\gamma_n\bigg) +
	\mu\Phi^{-1}\bigg(\int g\, d\gamma_n\bigg),
$$
then either
$$
	h(x) = \Phi(\langle a,x\rangle + \lambda b+\mu c),\quad
	f(x) = \Phi(\langle a,x\rangle + b),\quad
	g(x) = \Phi(\langle a,x\rangle + c)
$$
for all $x\in\mathbb{R}^n$ holds for some
$a\in\mathbb{R}^n$ and $b,c\in\mathbb{R}$, or
$$
	h(x) = f(x) = g(x) = \Phi(V(x))
$$
for all $x\in\mathbb{R}^n$ holds for
some concave function $V:\mathbb{R}^n\to\mathbb{R}$.
\end{prop}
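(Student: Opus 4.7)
The plan is to induct on the dimension $n$. The base case $n=1$ is Proposition \ref{prop:deg1d}, where the indicator equality case is excluded by the hypotheses that $f, g, h$ are smooth and take values in $(0,1)$. For the inductive step, assume the statement in dimension $n-1$. By rotation-invariance of $\gamma_n$, for any unit vector $v \in S^{n-1}$ we may (after rotation) take $v=e_n$ and define marginals $\bar f(x') := \int_\mathbb{R} f(x', s)\,\gamma_1(ds)$ and analogously $\bar g, \bar h$ on $\mathbb{R}^{n-1}$. These are smooth and $(0,1)$-valued, and applying the one-dimensional Ehrhard-Borell inequality at each slice yields the pointwise inequality $\Phi^{-1}(\bar h(\lambda x' + \mu y')) \ge \lambda \Phi^{-1}(\bar f(x')) + \mu \Phi^{-1}(\bar g(y'))$. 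Fubini shows equality in the $n$-dimensional Ehrhard-Borell for $f, g, h$ is equivalent to equality in the $(n-1)$-dimensional Ehrhard-Borell for $\bar f, \bar g, \bar h$, so the inductive hypothesis applies in every direction $v$, yielding either: (a)$_v$ the marginals take the affine form $\bar f = \Phi(\langle a', \cdot \rangle + b)$, $\bar g = \Phi(\langle a', \cdot \rangle + c)$, $\bar h = \Phi(\langle a', \cdot \rangle + \lambda b + \mu c)$ on $v^\perp$; or (c)$_v$ the marginals coincide and equal $\Phi(V')$ for some concave $V'$ on $v^\perp$.

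If case (c)$_v$ holds for every direction $v \in S^{n-1}$, then $f, g, h$ have identical Gaussian line-marginals in every direction. A standard Gaussian-marginal uniqueness argument forces $f = g = h$: the vanishing of the line-marginals of $f - g$ along $v$ implies (after taking a Fourier transform) that $(f - g)(w)\,e^{-\langle w, v \rangle^2/2}$ depends only on $\langle w, v \rangle$, so differentiating in directions perpendicular to $v$ gives $\nabla_{v^\perp}(f - g) \equiv 0$ for every $v$; hence $\nabla(f - g) \equiv 0$, so $f - g$ is a constant that must vanish (as its line-integral against $\gamma_1$ is zero). Once $f = g = h$, the pointwise hypothesis collapses to $\Phi^{-1}(f(\lambda x + \mu y)) \ge \lambda \Phi^{-1}(f(x)) + \mu \Phi^{-1}(f(y))$, which, since $\lambda + \mu = 1$, is exactly concavity of $V := \Phi^{-1}(f)$; this yields case (c) of the conclusion.

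Otherwise case (a)$_v$ holds for some $v$, which we may rotate to $v = e_n$. Substituting the affine forms into the residual
$$K(x', y') := \Phi^{-1}(\bar h(\lambda x' + \mu y')) - \lambda \Phi^{-1}(\bar f(x')) - \mu \Phi^{-1}(\bar g(y'))$$
gives $K \equiv 0$. Since $K \ge 0$ is precisely the one-dimensional Ehrhard-Borell surplus for the slice at $(x', y')$, the 1D Ehrhard-Borell holds with equality at every slice. Applying Proposition \ref{prop:deg1d} to each slice (the indicator alternative is excluded by smoothness), each slice falls into 1D case (a) (affine: $f(x', s) = \Phi(\alpha s + \beta_1)$, etc.) or 1D case (c) (the slices of $f, g, h$ coincide). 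Since $f(x', \cdot)$ does not depend on $y'$ and $g(y', \cdot)$ does not depend on $x'$, the slice slope $\alpha$ in 1D case (a) must equal a common constant $\alpha_0$, and the intercepts satisfy a Cauchy-type functional equation $\beta_3(\lambda x' + \mu y') = \lambda \beta_1(x') + \mu \beta_2(y')$, forcing them to be affine functions with common linear part. Patching, we obtain $f(x) = \Phi(\langle a, x \rangle + b)$ with $a = (\tilde a', \alpha_0) \in \mathbb{R}^n$, and similarly for $g, h$, giving case (a). If instead 1D case (c) arises at every slice, then $f = g = h$ depends only on $x_n$, and one recovers a degenerate instance of case (c).

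The technical heart of the argument lies in the slice-coherence analysis in the second case of the dichotomy: the 1D equality cases must be patched into a coherent global structure for $f, g, h$. The rigidity comes from the separation of slice parameters into functions of $x'$ and $y'$ alone (since $f(x', \cdot)$ and $g(y', \cdot)$ depend on disjoint variables) together with the Cauchy-type functional equation pinning down the intercepts, which admits only affine solutions. Mixing of 1D cases (a) and (c) at different slices poses no difficulty, as the two cases can consistently overlap only on a measure-zero set (or degenerate uniformly to case (c)), and continuity of the functions then extends the affine description to all of $\mathbb{R}^n$.
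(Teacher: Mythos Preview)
Your induction scheme is essentially the mirror image of the paper's: the paper integrates out the \emph{last $n-1$} coordinates to obtain one-dimensional marginals $\hat f,\hat g,\hat h$, applies the base case Proposition~\ref{prop:deg1d} to these, and then invokes the $(n-1)$-dimensional induction hypothesis on the slices; you instead integrate out a \emph{single} coordinate to obtain $(n-1)$-dimensional marginals $\bar f,\bar g,\bar h$, invoke the induction hypothesis on the marginals, and then apply the one-dimensional base case on the slices. The patching in your case (a)$_v$ (extracting a common slope $\alpha_0$ and solving the Cauchy-type equation $\lambda\beta_1(x')+\mu\beta_2(y')=\beta_3(\lambda x'+\mu y')$ via Lemma~\ref{lem:eqcalc}) is exactly dual to the paper's Lemma~\ref{lem:i1}, and the handling of mixed slice-types parallels the paper's approximation argument there.

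There is, however, a genuine gap in your treatment of the branch ``(c)$_v$ holds for every $v$.'' The claim that vanishing of the Gaussian line marginals of $\phi:=f-g$ in direction $v$ forces $\phi(w)e^{-\langle w,v\rangle^2/2}$ to depend only on $\langle w,v\rangle$ is false: the hypothesis $\int\phi(x'+sv)e^{-s^2/2}\,ds=0$ for all $x'\in v^\perp$ is a single scalar constraint per line, not the pointwise constancy you assert, so the differentiation step $\nabla_{v^\perp}\phi\equiv 0$ does not follow. The conclusion $f=g=h$ is nonetheless correct, and a clean fix is this: set $\Psi(x):=\phi(x)e^{-\|x\|^2/2}$; for $x'\in v^\perp$ one has $\int\Psi(x'+sv)\,ds=e^{-\|x'\|^2/2}\int\phi(x'+sv)e^{-s^2/2}\,ds=0$, so the (unweighted) X-ray transform of the Schwartz function $\Psi$ vanishes in every direction. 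By the Fourier slice theorem $\hat\Psi|_{v^\perp}=0$ for every $v$, hence $\hat\Psi\equiv 0$ and $\phi=0$.

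It is worth noting that the paper avoids this detour entirely: it never needs to vary the direction $v$. In the paper's analogue of your case (c)$_{e_n}$ (there called (I2), with $\hat f=\hat g=\hat h$), one simply observes that for each fixed $z$ the slice triple $(f_z,g_z,h_z)$ attains equality in the $(n-1)$-dimensional Ehrhard--Borell inequality, applies the induction hypothesis on each such diagonal slice, and patches. Your all-directions argument is more elegant once repaired, but the paper's single-direction slicing is more economical and keeps the two cases of the dichotomy on an equal footing.
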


The one-dimensional case $n=1$ of Proposition \ref{prop:degndsm} follows 
immediately from Proposition \ref{prop:deg1d}, using smoothness of 
$f,g,h$ to eliminate the null sets and the non-smooth equality case. We 
will presently prove the induction step: we assume in the rest of this
subsection that the statement of Proposition \ref{prop:degndsm} has been proved
in dimension $n-1$, and will show that it must also hold in dimension $n$.

To this end, fix smooth functions $f,g,h:\mathbb{R}^n\to(0,1)$ 
satisfying the condition of Proposition \ref{prop:degndsm}. Define for every
$z\in\mathbb{R}$ the functions $f_z,g_z,h_z:\mathbb{R}^{n-1}\to(0,1)$ as
$$
	h_z(x) := h(z,x),\qquad
	f_z(x) := f(z,x),\qquad
	g_z(x) := g(z,x),
$$
and define the functions $\hat f,\hat g,\hat h:\mathbb{R}\to(0,1)$ as
$$
	\hat h(z) := \int h_z\,d\gamma_{n-1},\qquad
	\hat f(z) := \int f_z\,d\gamma_{n-1},\qquad
	\hat g(z) := \int g_z\,d\gamma_{n-1}.
$$
Note that all the functions just defined are also smooth with values in
$(0,1)$.

\begin{lem}
\label{lem:indcases}
If equality holds in the Ehrhard-Borell inequality for $f,g,h$, 
either
\begin{equation}
\label{eq:I1}
\tag{I1}
	\hat h(z) = \Phi(az+\lambda b+\mu c),\qquad
	\hat f(z) = \Phi(az+b),\qquad
	\hat g(z) = \Phi(az+c)
\end{equation}
for some $a,b,c\in\mathbb{R}$, or
\begin{equation}
\label{eq:I2}
\tag{I2}
	\hat h(z)=\hat f(z)=\hat g(z) = \Phi(\hat V(z))
\end{equation}
for some concave function $\hat V:\mathbb{R}\to\mathbb{R}$.
\end{lem}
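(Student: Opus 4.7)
The plan is to reduce the $n$-dimensional equality statement to the one-dimensional case, which has just been proved (the $n=1$ base case of Proposition \ref{prop:degndsm} is immediate from Proposition \ref{prop:deg1d} since smoothness and range in $(0,1)$ automatically exclude the indicator equality case and the a.e.\ qualifiers).

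First, I would slice the Ehrhard-Borell hypothesis. Write $x=(x_1,x')$, $y=(y_1,y')$ with $x',y'\in\mathbb{R}^{n-1}$; then $\lambda x+\mu y=(\lambda x_1+\mu y_1,\lambda x'+\mu y')$, so the assumption
$$\Phi^{-1}(h(\lambda x+\mu y))\ge \lambda\Phi^{-1}(f(x))+\mu\Phi^{-1}(g(y))$$
reads, for each fixed $(x_1,y_1)\in\mathbb{R}^2$,
$$\Phi^{-1}(h_{\lambda x_1+\mu y_1}(\lambda x'+\mu y'))\ge \lambda\Phi^{-1}(f_{x_1}(x'))+\mu\Phi^{-1}(g_{y_1}(y'))$$
for all $x',y'\in\mathbb{R}^{n-1}$. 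That is, the triple $(f_{x_1},g_{y_1},h_{\lambda x_1+\mu y_1})$ satisfies the Ehrhard-Borell hypothesis \eqref{eq:B} in dimension $n-1$.

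Second, applying the Ehrhard-Borell inequality (Theorem \ref{thm:be}) in dimension $n-1$ to each such slice yields, for every $x_1,y_1\in\mathbb{R}$,
$$\Phi^{-1}(\hat h(\lambda x_1+\mu y_1))\ge \lambda\Phi^{-1}(\hat f(x_1))+\mu\Phi^{-1}(\hat g(y_1)).$$
Thus $\hat f,\hat g,\hat h$ satisfy the Ehrhard-Borell hypothesis in dimension $1$. Next I would invoke Fubini: $\int h\,d\gamma_n=\int\hat h\,d\gamma_1$ and similarly for $f,g$, so the assumed equality in the $n$-dimensional Ehrhard-Borell inequality for $f,g,h$ is exactly equality in the one-dimensional Ehrhard-Borell inequality for $\hat f,\hat g,\hat h$.

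Third, $\hat f,\hat g,\hat h$ are smooth and take values in $(0,1)$ (differentiation under the integral sign using the uniform bounds on $f,g,h$ and their derivatives). Hence they satisfy the full hypotheses of Proposition \ref{prop:degndsm} in dimension $n=1$, which is the base case already established via Proposition \ref{prop:deg1d}: smoothness together with $\hat f,\hat g,\hat h\in(0,1)$ eliminates the indicator case \eqref{eq:H2} and the a.e.\ clauses, leaving precisely the two alternatives \eqref{eq:I1} and \eqref{eq:I2}.

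The main (very minor) obstacle is merely bookkeeping: making sure the reduction to slices is clean and that the one-dimensional base case of Proposition \ref{prop:degndsm} does follow from Proposition \ref{prop:deg1d} after discarding the non-smooth case; no genuine analytic difficulty arises here, since the hard work has been done in the previous section.
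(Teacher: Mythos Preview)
Your proposal is correct and follows essentially the same route as the paper: slice the hypothesis, apply the Ehrhard--Borell inequality in dimension $n-1$ to obtain the one-dimensional hypothesis for $\hat f,\hat g,\hat h$, use Fubini to transfer the equality, and then invoke the one-dimensional result (Proposition~\ref{prop:deg1d}). Your explicit remark that smoothness and the range $(0,1)$ rule out the indicator alternative and the a.e.\ qualifiers is exactly the reason the conclusion reduces to the two stated cases.
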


\begin{proof}
The assumption on $f,g,h$ implies that
$$
	\Phi^{-1}(h_{\lambda z_1+\mu z_2}(\lambda x+\mu y))
	\ge
	\lambda\Phi^{-1}(f_{z_1}(x))+
	\mu\Phi^{-1}(g_{z_2}(y)).
$$
Thus applying the Ehrhard-Borell inequality (Theorem \ref{thm:be}) yields
$$
	\Phi^{-1}(\hat h(\lambda z_1+\mu z_2)) \ge
	\lambda\Phi^{-1}(\hat f(z_1)) +
	\mu\Phi^{-1}(\hat g(z_2)),
$$
that is, $\hat f,\hat g,\hat h$ satisfy the assumption of the one-dimensional
Ehrhard-Borell inequality. On the other hand, as $\int f d\gamma_n =
\int \hat f d\gamma_1$ and analogously for $g,h$, the assumption of equality
in the Ehrhard-Borell inequality implies that
$$
	\Phi^{-1}\bigg(\int \hat h\,d\gamma_1\bigg) =
	\lambda \Phi^{-1}\bigg(\int \hat f\,d\gamma_1\bigg) +
	\mu \Phi^{-1}\bigg(\int \hat g\,d\gamma_1\bigg).
$$
The conclusion follows by applying Proposition \ref{prop:deg1d}.
\end{proof}

To proceed, we first address the first case of Lemma \ref{lem:indcases}.

\begin{lem}
\label{lem:i1}
If equality holds in the Ehrhard-Borell inequality and \eqref{eq:I1} holds, then
$$
	h(x) = \Phi(\langle a,x\rangle + \lambda b+\mu c),\quad
	f(x) = \Phi(\langle a,x\rangle + b),\quad
	g(x) = \Phi(\langle a,x\rangle + c)
$$
for some $a\in\mathbb{R}^n$ and $b,c\in\mathbb{R}$.
\end{lem}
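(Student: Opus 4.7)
The plan is to apply the inductive hypothesis (Proposition \ref{prop:degndsm} in dimension $n-1$) slice-by-slice in the first coordinate and then to rigidify the resulting slice information into a global affine form on $\mathbb{R}^n$. First, the slice Ehrhard-Borell inequality
$$\Phi^{-1}(\hat h(\lambda z_1 + \mu z_2)) \ge \lambda \Phi^{-1}(\hat f(z_1)) + \mu \Phi^{-1}(\hat g(z_2))$$
(obtained by applying the $(n-1)$-dimensional Ehrhard-Borell inequality to the slice functions $f_{z_1}, g_{z_2}, h_{\lambda z_1 + \mu z_2}$) combined with \eqref{eq:I1} shows that both sides equal $a(\lambda z_1 + \mu z_2) + \lambda b + \mu c$. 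Hence equality holds in the $(n-1)$-dimensional Ehrhard-Borell inequality for these slice functions at every $(z_1, z_2) \in \mathbb{R}^2$. Since the slice functions inherit smoothness and $(0,1)$-valuedness from $f, g, h$, the inductive hypothesis applies and provides at each $(z_1, z_2)$ either (i) the affine form
$$f_{z_1}(w) = \Phi(\langle \alpha, w\rangle + \gamma_f),\quad g_{z_2}(w) = \Phi(\langle \alpha, w\rangle + \gamma_g),\quad h_{\lambda z_1 + \mu z_2}(w) = \Phi(\langle \alpha, w\rangle + \lambda \gamma_f + \mu \gamma_g)$$
with a common $\alpha = \alpha(z_1, z_2) \in \mathbb{R}^{n-1}$ and $\gamma_f, \gamma_g \in \mathbb{R}$, or (ii) the identity $f_{z_1} = g_{z_2} = h_{\lambda z_1 + \mu z_2} = \Phi(V_{z_1,z_2})$ for some concave $V_{z_1,z_2}$.

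Next I would localize case (i) by ruling out case (ii) on a large set. Integrating over $w$ in case (ii) forces $\hat f(z_1) = \hat g(z_2)$, which under \eqref{eq:I1} gives $a(z_1 - z_2) = c - b$. Away from the degenerate regime where \eqref{eq:I1} overlaps with \eqref{eq:I2} (namely $a = 0$ and $b = c$), case (ii) is confined to a proper closed subset of $\mathbb{R}^2$, so case (i) holds on a nonempty open set $\Omega \subseteq \mathbb{R}^2$. Since the representation $f_{z_1}(w) = \Phi(\langle \alpha(z_1, z_2), w\rangle + \gamma_f(z_1, z_2))$ has left-hand side independent of $z_2$, uniqueness of this representation (valid since $f_{z_1}$ is smooth and nontrivial) forces $\alpha$ and $\gamma_f$ to depend only on $z_1$, and by the symmetric argument the direction vector arising from $g_{z_2}$ depends only on $z_2$; matching the common direction on $\Omega$ then pins $\alpha$ down to a single constant vector in $\mathbb{R}^{n-1}$.

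From here the assembly is routine: $f(z_1, w) = \Phi(\langle \alpha, w\rangle + \gamma_f(z_1))$ holds on a dense open subset of $\mathbb{R}^n$ and extends to all of $\mathbb{R}^n$ by smoothness. Integrating against $\gamma_{n-1}$ yields $\Phi(\gamma_f(z_1)/\sqrt{1+\|\alpha\|^2}) = \Phi(az_1 + b)$, so $\gamma_f$ is affine in $z_1$; the analogous conclusions for $g, h$ then produce the claimed $n$-dimensional affine representation with direction vector $(a\sqrt{1+\|\alpha\|^2}, \alpha) \in \mathbb{R}^n$ and scalar constants obtained by multiplying the $b, c$ of \eqref{eq:I1} by $\sqrt{1+\|\alpha\|^2}$. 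The main obstacle I anticipate is the rigidity step that pins $\alpha$ down to a single constant across $\Omega$, which requires carefully combining the uniqueness statement (giving $z_2$-independence from $f$) with its counterpart from $g$ (giving $z_1$-independence) on a two-dimensional open region; the residual edge case $a = 0$, $b = c$, where \eqref{eq:I1} genuinely overlaps with \eqref{eq:I2}, lies outside the reach of this approach and should be subsumed by the companion treatment of case \eqref{eq:I2}.
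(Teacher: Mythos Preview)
Your proposal is correct and follows essentially the same approach as the paper: apply the $(n-1)$-dimensional induction hypothesis to the slices $f_{z_1},g_{z_2},h_{\lambda z_1+\mu z_2}$, confine the concave alternative to the line $\{az_1+b=az_2+c\}$, and then rigidify the slice parameters to a single affine form (the paper does this last step by evaluating at $\pm x$ and at $0$ and then invoking Lemma~\ref{lem:eqcalc} on the relation $\lambda b_{z_1}+\mu c_{z_2}=w(\lambda z_1+\mu z_2)$, whereas you integrate against $\gamma_{n-1}$ and match with \eqref{eq:I1} directly, which works equally well). Your remark about the edge case $a=0$, $b=c$ is apt: the paper's proof also tacitly excludes it (since then the exceptional set $D$ equals all of $\mathbb{R}^2$ and is not lower-dimensional), and indeed that situation is a special instance of \eqref{eq:I2} and is handled by the companion lemma.
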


\begin{proof}
By the definition of $\hat f(z)$, the assumption \eqref{eq:I1} implies
that
$$
	\Phi^{-1}\bigg(
	\int f_{z}\,d\gamma_{n-1}
	\bigg) =
	\Phi^{-1}(\hat f(z)) =
	 az +b,
$$
and analogously for $\hat g,\hat h$. Thus \eqref{eq:I1} implies
$$
	\Phi^{-1}\bigg(
	\int h_{\lambda z_1+\mu z_2}\,d\gamma_{n-1}
	\bigg) = 
	\lambda \Phi^{-1}\bigg(
	\int f_{z_1}\,d\gamma_{n-1}
	\bigg) +
	\mu\Phi^{-1}\bigg(
	\int g_{z_2}\,d\gamma_{n-1}
	\bigg).
$$
Recall that we showed in the proof of Lemma \ref{lem:indcases} that
the functions $f_{z_1},g_{z_2},h_{\lambda z_1+\mu z_2}$ satisfy the assumption
of the Ehrhard-Borell inequality on $\mathbb{R}^{n-1}$. As we assumed
at the outset of the proof of Proposition \ref{prop:degndsm} that its
conclusion holds in $n-1$ dimensions (the induction hypothesis), we conclude
that for every $z_1,z_2\in\mathbb{R}$, either
\begin{align*}
	&h_{\lambda z_1+\mu z_2}(x) =
	\Phi(\langle a_{z_1,z_2},x\rangle+\lambda b_{z_1,z_2}+\mu c_{z_1,z_2}),
	\\ &
	f_{z_1}(x) =
	\Phi(\langle a_{z_1,z_2},x\rangle+b_{z_1,z_2}),
	\\ &
	g_{z_2}(x) =
	\Phi(\langle a_{z_1,z_2},x\rangle+c_{z_1,z_2}).
\end{align*}
for some $a_{z_1,z_2}\in\mathbb{R}^{n-1}$ and $b_{z_1,z_2},c_{z_1,z_2}\in\mathbb{R}$,
or
$$
	h_{\lambda z_1+\mu z_2}(x)=f_{z_1}(x)=g_{z_2}(x)=
	\Phi(V_{z_1,z_2}(x))
$$
for some concave function $V_{z_1,z_2}:\mathbb{R}^{n-1}\to\mathbb{R}$.

Let us first argue that the second case can be ignored, so we may assume that
the first case holds for every $z_1,z_2\in\mathbb{R}$. To this end, suppose
the second case holds for some $z_1,z_2$. Integrating with respect to $x$
shows that we must then have $\hat f(z_1)=\hat g(z_2)$. Thus the second
case can only occur on the lower-dimensional set
$$
	D:=\{(z_1,z_2):az_1+b=az_2+c\}\subset\mathbb{R}^2.
$$
Consequently, any such $(z_1,z_2)$ can be approximated by 
$(z_1',z_2')\not\in D$ for which the first case must hold.
Now let $z_1'\to z_1$, $z_2'\to z_2$. By continuity of $f,g,h$,
it follows that $(\Phi^{-1}(h_{\lambda z_1+\mu 
z_2}),\Phi^{-1}(f_{z_1}), \Phi^{-1}(g_{z_2}))$ is a limit of triples
of linear functions with the same slope. But neither the slope nor the
offsets of these linear functions may diverge, as that would contradict the
assumption that $f,g,h$ take values in $(0,1)$. Thus
$\Phi^{-1}(h_{\lambda z_1+\mu z_2}),\Phi^{-1}(f_{z_1}),\Phi^{-1}(g_{z_2})$
are themselves linear functions with the same slope, that is, 
the first case is applies automatically even when $(z_1,z_2)\in D$.
We can therefore ignore the second case from now onward.

To complete the proof, it remains to understand the dependence of the
parameters $a_{z_1,z_2},b_{z_1,z_2},c_{z_1,z_2}$ on $z_1,z_2$. Let us
begin with $a_{z_1,z_2}$. Note that
$$
	\Phi^{-1}(f_{z_1}(x)) - \Phi^{-1}(f_{z_1}(-x)) =
	2\langle a_{z_1,z_2},x\rangle
$$
for all $x,z_1,z_2$. This shows that $a_{z_1,z_2}=a_{z_1}$ cannot
depend on $z_2$. Similarly,
$$
	\Phi^{-1}(g_{z_2}(x)) - \Phi^{-1}(g_{z_2}(-x)) =
	2\langle a_{z_1},x\rangle
$$
for all $x,z_1,z_2$, so that $a_{z_1}=a_0$ is simply a constant
independent of $z_1,z_2$.

By an entirely analogous argument, note that
$$
	\Phi^{-1}(h_{\lambda z_1+\mu z_2}(0)) =
	\lambda b_{z_1,z_2}+\mu c_{z_1,z_2},\quad
	\Phi^{-1}(f_{z_1}(0)) = b_{z_1,z_2},\quad
	\Phi^{-1}(g_{z_2}(0)) = c_{z_1,z_2}
$$
for all $x,z_1,z_2$. We conclude that $b_{z_1,z_2}=b_{z_1}$,
$c_{z_1,z_2}=c_{z_2}$, and
$$
	\lambda b_{z_1}+\mu c_{z_2} = w(\lambda z_1+\mu z_2)
$$
for some function $w:\mathbb{R}\to\mathbb{R}$. But as $f,g,h$ were assumed
to be smooth, the functions $z\mapsto b_z$, $z\mapsto c_z$, and
$z\mapsto w(z)$ must evidently be smooth as well. We can therefore
conclude using Lemma \ref{lem:eqcalc} that we have
$$
	b_z = a'z+b,\qquad c_z=a'z+c
$$
for some $a',b,c\in\mathbb{R}$.
Putting together the above observations, we conclude that
\begin{align*}
	h_z(x) &= \Phi(\langle a_0,x\rangle + a'z + \lambda b+\mu c),\\
	f_z(x) &= \Phi(\langle a_0,x\rangle + a'z + b),\\
	g_z(x) &= \Phi(\langle a_0,x\rangle + a'z + c).
\end{align*}
By the definition of $f_z,g_z,h_z$, this concludes the proof.
\end{proof}

It remains to consider the second case of Lemma \ref{lem:indcases}.

\begin{lem}
If equality holds in the Ehrhard-Borell inequality and \eqref{eq:I2} holds, then
$$
	h(x) = f(x) = g(x) = \Phi(V(x))
$$
for some concave function $V:\mathbb{R}^n\to\mathbb{R}$.
\end{lem}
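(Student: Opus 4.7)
The strategy is to reduce to the induction hypothesis by analyzing the $(n-1)$-dimensional slices $(f_z, g_z, h_z)$ level by level. Setting $z_1 = z_2 = z$ in the slice Ehrhard-Borell inequality from the proof of Lemma \ref{lem:indcases}, the triple satisfies the pointwise Ehrhard-Borell hypothesis on $\mathbb{R}^{n-1}$ (since $\lambda z + \mu z = z$) and moreover attains equality in the corresponding $(n-1)$-dimensional Ehrhard-Borell inequality: under \eqref{eq:I2} combined with $\lambda + \mu = 1$,
$$
\Phi^{-1}\!\left(\int h_z\,d\gamma_{n-1}\right) = \hat V(z) = \lambda \hat V(z) + \mu \hat V(z) = \lambda \Phi^{-1}\!\left(\int f_z\,d\gamma_{n-1}\right) + \mu \Phi^{-1}\!\left(\int g_z\,d\gamma_{n-1}\right).
$$
The induction hypothesis (Proposition \ref{prop:degndsm} in dimension $n-1$) is therefore applicable to $(f_z, g_z, h_z)$ for every $z \in \mathbb{R}$.

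For each $z$, the induction yields one of two possibilities: either (a) $f_z(x) = \Phi(\langle a_z, x\rangle + b_z)$, $g_z(x) = \Phi(\langle a_z, x\rangle + c_z)$, and $h_z(x) = \Phi(\langle a_z, x\rangle + \lambda b_z + \mu c_z)$ for some $a_z \in \mathbb{R}^{n-1}$ and $b_z, c_z \in \mathbb{R}$, or (b) $f_z = g_z = h_z = \Phi(V_z)$ for some concave $V_z$. Case (b) immediately gives $f_z = g_z = h_z$. In case (a), integrating yields $\hat f(z) = \Phi(b_z/\sqrt{1 + \|a_z\|^2})$ and $\hat g(z) = \Phi(c_z/\sqrt{1 + \|a_z\|^2})$, so the constraint $\hat f(z) = \hat g(z)$ from \eqref{eq:I2}, together with injectivity of $\Phi$, forces $b_z = c_z$, and hence again $f_z = g_z = h_z$. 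This holds for every $z$, so $f = g = h$ globally on $\mathbb{R}^n$.

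Finally, setting $V := \Phi^{-1}(f)$, which is smooth on $\mathbb{R}^n$ since $f$ is smooth with values in $(0,1)$, and substituting $f = g = h$ into the pointwise Ehrhard-Borell hypothesis gives
$$
V(\lambda x + \mu y) \ge \lambda V(x) + \mu V(y) \quad \text{for all } x, y \in \mathbb{R}^n.
$$
A Taylor expansion at $y = x + \varepsilon v$ to order $\varepsilon^2$, using $\lambda + \mu = 1$, yields $\langle v, \nabla^2 V(x) v\rangle \le 0$ for all $v$, so $V$ is concave. The main subtlety is the bookkeeping in the second paragraph: in case (a) of the induction hypothesis, the integration constraint from \eqref{eq:I2} is precisely what collapses the three offsets $b_z, c_z, \lambda b_z + \mu c_z$ to a common value, which allows the two alternatives of the induction hypothesis to merge into the single conclusion $f_z = g_z = h_z$.
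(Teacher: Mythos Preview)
Your proof is correct and follows essentially the same approach as the paper: both set $z_1=z_2=z$, apply the induction hypothesis to the slices $(f_z,g_z,h_z)$, collapse the affine alternative (a) into the concave alternative (b) by integrating and invoking $\hat f(z)=\hat g(z)$ from \eqref{eq:I2} to force $b_z=c_z$, and then deduce concavity of $V=\Phi^{-1}(f)$ from the original pointwise hypothesis. The only cosmetic differences are that you compute the Gaussian integral $\Phi(b_z/\sqrt{1+\|a_z\|^2})$ explicitly and argue concavity via a second-order Taylor expansion, whereas the paper simply appeals to smoothness; neither constitutes a genuinely different route.
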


\begin{proof}
The assumption \eqref{eq:I2} implies that
$$
	\Phi^{-1}(\hat h(z)) = \lambda \Phi^{-1}(\hat f(z)) +
	\mu \Phi^{-1}(\hat g(z)),
$$
which implies as in the proof of Lemma \ref{lem:i1} that
$$
	\Phi^{-1}\bigg(
	\int h_{z}\,d\gamma_{n-1}
	\bigg) = 
	\lambda \Phi^{-1}\bigg(
	\int f_{z}\,d\gamma_{n-1}
	\bigg) +
	\mu\Phi^{-1}\bigg(
	\int g_{z}\,d\gamma_{n-1}
	\bigg).
$$
Moreover, the functions $f_z,g_z,h_z$ clearly satisfy the assumption of
the Ehrhard-Borell inequality on $\mathbb{R}^{n-1}$. As our induction
hypothesis states that the conclusion of Proposition \ref{prop:degndsm} holds
in dimension $n-1$, we conclude that for every $z\in\mathbb{R}$ either
\begin{align*}
	&h_{z}(x) =
	\Phi(\langle a_{z},x\rangle+\lambda b_{z}+\mu c_{z}),
	\\ &
	f_{z}(x) =
	\Phi(\langle a_{z},x\rangle+b_{z}),
	\\ &
	g_{z}(x) =
	\Phi(\langle a_{z},x\rangle+c_{z}).
\end{align*}
for some $a_{z}\in\mathbb{R}^{n-1}$ and $b_{z},c_{z}\in\mathbb{R}$,
or
$$
	h_{z}(x)=f_{z}(x)=g_{z}(x)=
	\Phi(V_{z}(x))
$$
for some concave function $V_{z}:\mathbb{R}^{n-1}\to\mathbb{R}$.

Consider first $z\in\mathbb{R}$ for which the first case holds. 
Integrating with respect to $x$ and using that $\hat f(z)=\hat g(z)$ by 
\eqref{eq:I2}, we conclude that necessarily $b_z=c_z$. Thus the first 
case reduces to a special case of the second case, so there is no need 
to consider it separately. We will therefore ignore the first case from 
now onward.

As the second case holds for all $x\in\mathbb{R}^{n-1}$ and
$z\in\mathbb{R}$, we have shown that
$$
		h(x)=f(x)=g(x)
$$
everywhere. It remains to show that $\Phi^{-1}(f(x))$ is a concave function.
But note that the assumption of Proposition \ref{prop:degndsm} implies that
$$
	\Phi^{-1}(f(\lambda x+\mu y))
	\ge \lambda \Phi^{-1}(f(x)) + \mu \Phi^{-1}(f(y))
$$
for all $x,y$, so the claim follows as $\Phi^{-1}(f)$ is smooth.
\end{proof}

\subsection{Regularization}

To complete the proof of Proposition \ref{prop:degnd}, it remains to
show that the conclusion of Proposition \ref{prop:degndsm} continues to
hold in the absence of the additional regularity assumption. This is
easily accomplished by exploiting the techniques that we already used
to address regularity in the previous sections.

\begin{proof}[Proof of Proposition \ref{prop:degnd}]
Let $f,g,h$ satisfy the assumptions of Proposition \ref{prop:degnd}, and
assume equality holds in the Ehrhard-Borell inequality. First, we recall
from the proof of Corollary \ref{cor:ndeq} that $C(t,x,y)\ge 0$ for
every $t\in(0,1)$ and $x,y\in\mathbb{R}^n$. Thus
$$
	\Phi^{-1}(Q_th(\lambda x+\mu y)) \ge
	\lambda\Phi^{-1}(Q_tf(x))+\mu\Phi^{-1}(Q_tg(y)).
$$
Moreover, $Q_tf,Q_tg,Q_th$ are smooth and take values in $(0,1)$.
Thus we have shown that $Q_tf,Q_tg,Q_th$ satisfy the
assumptions of Proposition \ref{prop:degndsm} for every $t\in(0,1)$.

Let us now note that
$$
	\int Q_tf(x\sqrt{1-t})\,\gamma_n(dx) = 
	\int f\,d\gamma_n,
$$
and analogously for $g,h$. Thus if equality holds in the
Ehrhard-Borell inequality for $f,g,h$, then the same is true for
$Q_tf,Q_tg,Q_th$ (up to scaling the spatial coordinate).
We can therefore conclude from Proposition \ref{prop:degndsm} that
for every $t\in (0,1)$, either
$$
	Q_th(x) = \Phi(\langle a_t,x\rangle + \lambda b_t+\mu c_t),\quad
	Q_tf(x) = \Phi(\langle a_t,x\rangle + b_t),\quad
	Q_tg(x) = \Phi(\langle a_t,x\rangle + c_t)
$$
for some
$a_t\in\mathbb{R}^n$ and $b_t,c_t\in\mathbb{R}$, or
$$
	Q_th(x) = Q_tf(x) = Q_tg(x) = \Phi(V_t(x))
$$
for some concave function $V_t:\mathbb{R}^n\to\mathbb{R}$. If the first 
case holds for some $t\in(0,1)$, the conclusion of Proposition 
\ref{prop:degnd} follows immediately from Lemma \ref{lem:invert}. 
Conversely, if the second case holds for all $t\in(0,1)$, the proof is 
concluded by repeating the argument at the end of the proof of Lemma 
\ref{lem:concave}.
\end{proof}

\section{The general case}
\label{sec:gen}

So far, we have devoted all our efforts to proving some apparently 
rather special cases of the main result of this paper: Propositions 
\ref{prop:nondeg} and \ref{prop:degnd} prove Theorem \ref{thm:main} in 
the special case $m=2$, with $|\lambda_1-\lambda_2|\ne 1$, and without 
additional convexity assumptions. The aim of this final section of the 
paper is to complete the picture of the equality cases in the 
Ehrhard-Borell inequality. We will prove the equality cases in the 
remaining degenerate case $|\lambda_1-\lambda_2|=1$, extend the result 
to arbitrary $m$, and consider the additional cases that arise under 
convexity assumptions.

It turns out that none of these extensions lie at the core of the 
analysis of the equality cases. Rather, the equality cases we 
have proved in the previous sections will suffice to deduce the 
remaining cases of Theorem \ref{thm:main}. As we will see below, the 
degenerate case $|\lambda_1-\lambda_2|=1$ can be transformed to the 
degenerate case $\lambda_1+\lambda_2$ by a change of variables, so that 
these two situations are essentially in duality with one another. On the 
other hand, the extension to general $m$ and the treatment of the 
additional convexity assumptions can be deduced from the limited cases 
proved so far by an induction argument, which is similar in spirit to 
the treatment of the general Ehrhard-Borell inequality given in 
\cite{Bor08}. These arguments will be worked out in detail in the following
subsections, completing the proof of Theorem \ref{thm:main}.

\subsection{The degenerate case $|\lambda-\mu|=1$}

Even in the case of $m=2$ functions, we have so far neglected the 
remaining degenerate case $|\lambda-\mu|=1$ of Theorem \ref{thm:main}. 
This case will now be settled by the following lemma.

\begin{lem}
\label{lem:deglast}
Let $\lambda\ge\mu>0$ satisfy $\lambda=1+\mu$,
and let $f,g,h:\mathbb{R}^n\to[0,1]$ be nontrivial measurable
functions satisfying 
$$
	\Phi^{-1}(h(\lambda x+\mu y))
	\gea \lambda \Phi^{-1}(f(x)) + \mu \Phi^{-1}(g(y)).
$$
If equality holds in the Ehrhard-Borell inequality
$$
	\Phi^{-1}\bigg(\int h\, d\gamma_n\bigg) =
	\lambda\Phi^{-1}\bigg(\int f\, d\gamma_n\bigg) +
	\mu\Phi^{-1}\bigg(\int g\, d\gamma_n\bigg),
$$
then \textbf{either}
$$
	h(x) \eqa \Phi(\langle a,x\rangle + \lambda b+\mu c),\quad
	f(x) \eqa \Phi(\langle a,x\rangle + b),\quad
	g(x) \eqa \Phi(\langle a,x\rangle + c)
$$
for some $a\in\mathbb{R}^n$ and $b,c\in\mathbb{R}$, \textbf{or}
$$
	h(x) \eqa 1_{\langle a,x\rangle + \lambda b+\mu c\ge 0},\quad
	f(x) \eqa 1_{\langle a,x\rangle + b\ge 0},\quad
	g(x) \eqa 1_{\langle a,x\rangle + c\ge 0}
$$
for some $a\in\mathbb{R}^n$ and $b,c\in\mathbb{R}$, \textbf{or}
$$
	1-h(-x) \eqa 1-f(-x) \eqa g(x) \eqa \Phi(V(x))
$$
for some concave function $V:\mathbb{R}^n\to\mathbb{\bar R}$.
\end{lem}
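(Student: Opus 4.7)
The plan is to reduce this degenerate case to the already-settled case of Proposition \ref{prop:degnd} (where $\lambda + \mu = 1$) via a duality transformation. Set
$$\lambda' := 1/\lambda, \qquad \mu' := \mu/\lambda, \qquad F := 1 - f, \quad G := g(-\,\cdot\,), \quad H := 1 - h.$$
Since $\lambda = 1 + \mu$, one has $\lambda' + \mu' = (1+\mu)/\lambda = 1$, and $F, G, H : \mathbb{R}^n \to [0,1]$ are nontrivial. The functions $F, G, H$ will play the roles of ``$h$'', ``$g$'', ``$f$'' in Proposition \ref{prop:degnd} respectively (i.e., $F$ is the function evaluated at the convex combination).

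First I would verify that $(F, G, H)$ satisfy the hypotheses of Proposition \ref{prop:degnd}. Replacing $y$ by $-y$ in the given inequality (using symmetry of Lebesgue measure), then making the linear change of variables $u = \lambda x - \mu y$, $x = \lambda' u + \mu' y$ (a bijection of $\mathbb{R}^{2n}$ with nonzero Jacobian, hence preserving null sets), and applying the identity $\Phi^{-1}(1-z) = -\Phi^{-1}(z)$ (valid on $[0,1]$ under the convention $\infty - \infty = -\infty$), one obtains
$$\Phi^{-1}\bigl(F(\lambda' u + \mu' y)\bigr) \gea \lambda' \Phi^{-1}(H(u)) + \mu' \Phi^{-1}(G(y)).$$
The equality assumption transforms analogously: from $\int F \, d\gamma_n = 1 - \int f \, d\gamma_n$, $\int H \, d\gamma_n = 1 - \int h \, d\gamma_n$, and $\int G \, d\gamma_n = \int g \, d\gamma_n$ (by symmetry of $\gamma_n$), the given equality rearranges to equality in the Ehrhard-Borell integral inequality for $(F, G, H)$ with parameters $(\lambda', \mu')$.

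Next I would apply Proposition \ref{prop:degnd} to $(F, G, H)$ and transport each of its three conclusions back through the duality. In the smooth case, $F(x) \eqa \Phi(\langle a, x\rangle + \lambda' b + \mu' c)$, $H(x) \eqa \Phi(\langle a, x\rangle + b)$, $G(x) \eqa \Phi(\langle a, x\rangle + c)$ inverts (using $\Phi(-t) = 1-\Phi(t)$) to
$$f(x) \eqa \Phi(\langle -a, x\rangle + b_f), \quad g(x) \eqa \Phi(\langle -a, x\rangle + b_g), \quad h(x) \eqa \Phi(\langle -a, x\rangle + b_h),$$
with $b_f = -\lambda' b - \mu' c$, $b_g = c$, $b_h = -b$. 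A direct check using $\lambda\lambda' = 1$ and $\lambda\mu' = \mu$ gives $\lambda b_f + \mu b_g = -b = b_h$, matching the first equality case of the lemma. The indicator case translates identically to the second. In the third case, $F \eqa G \eqa H \eqa \Phi(V)$ for some concave $V$ inverts to $1 - h(x) \eqa g(-x) \eqa 1 - f(x) \eqa \Phi(V(x))$, and substituting $x \mapsto -x$ together with $V'(x) := V(-x)$ (again concave) gives the third equality case exactly as stated.

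The main bookkeeping issue is ensuring that the chain of manipulations linking the a.e.\ inequality for $(f, g, h)$ to that for $(F, G, H)$ survives the $\pm\infty$ conventions, but each individual step (sign flip, linear change of variables, multiplication by $1/\lambda$) is valid a.e., so no new analytic input beyond Proposition \ref{prop:degnd} is required. In effect the lemma records the symmetry of the Ehrhard-Borell inequality under the complementation $f \mapsto 1 - f(-\,\cdot\,)$ that swaps the two degenerate cases $\lambda + \mu = 1$ and $\lambda - \mu = 1$.
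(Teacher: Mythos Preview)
Your proposal is correct and is essentially the same argument as the paper's: both reduce the case $\lambda=1+\mu$ to Proposition~\ref{prop:degnd} via the complementation transform $(f,g,h)\mapsto(1-f,\,g(-\cdot),\,1-h)$ together with the linear change of variables that turns $(\lambda,\mu)$ into $(1/\lambda,\mu/\lambda)$ with sum $1$, and then transport the three equality cases back. The only cosmetic difference is the order in which you perform the sign flip on $y$ and the change of variables, and which of the two source functions you pair with which coefficient; the paper likewise flags the $\pm\infty$ bookkeeping as the one point requiring care.
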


\begin{proof}
Let us rearrange the assumption as
$$
	-\Phi^{-1}(f(x))
	\gea 
	\frac{\mu}{\lambda} \Phi^{-1}(g(y))
	-\frac{1}{\lambda}\Phi^{-1}(h(\lambda x+\mu y))
$$
(one may verify using the convention $\infty-\infty=-\infty$ that this 
claim is valid even when some of the terms take the values $\pm\infty$). 
Define
$$
	\tilde\lambda := \frac{\mu}{\lambda},\qquad
	\tilde\mu := \frac{1}{\lambda},\qquad
	\tilde x := -y,\qquad
	\tilde y := \lambda x+\mu y,
$$
and
$$
	\tilde h(x):=1-f(x),\qquad
	\tilde f(x):=g(-x),\qquad
	\tilde g(x):=1-h(x).
$$
Then $\tilde\lambda+\tilde\mu =1$ and
$$
	\Phi^{-1}(\tilde h(\tilde\lambda\tilde x + \tilde\mu\tilde y))
	\gea 
	\tilde\lambda \Phi^{-1}(\tilde f(\tilde x))
	+\tilde\mu \Phi^{-1}(\tilde g(\tilde y)),
$$
where we used $-\Phi^{-1}(x)=\Phi^{-1}(1-x)$. Moreover, equality in
the Ehrhard-Borell inequality implies, after rearranging, that
$$
	\Phi^{-1}\bigg(\int \tilde h\, d\gamma_n\bigg) =
	\tilde\lambda\Phi^{-1}\bigg(\int \tilde f\, d\gamma_n\bigg) +
	\tilde\mu\Phi^{-1}\bigg(\int \tilde g\, d\gamma_n\bigg),
$$
where we used that the Gaussian measure $\gamma_n$ is symmetric.
Thus we have reduced to the dual degenerate case $\tilde\lambda+\tilde\mu=1$,
for which Proposition \ref{prop:degnd} implies that either
$$
	\tilde h(x) \eqa \Phi(\langle a,x\rangle + \tilde \lambda b+\tilde \mu c),\quad
	\tilde f(x) \eqa \Phi(\langle a,x\rangle + b),\quad
	\tilde g(x) \eqa \Phi(\langle a,x\rangle + c)
$$
for some $a\in\mathbb{R}^n$ and $b,c\in\mathbb{R}$, or
$$
	\tilde h(x) \eqa 1_{\langle a,x\rangle + \tilde\lambda b+\tilde\mu c\ge 0},\quad
	\tilde f(x) \eqa 1_{\langle a,x\rangle + b\ge 0},\quad
	\tilde g(x) \eqa 1_{\langle a,x\rangle + c\ge 0}
$$
for some $a\in\mathbb{R}^n$ and $b,c\in\mathbb{R}$, or
$$
	\tilde h(x) \eqa \tilde f(x) \eqa \tilde g(x) \eqa \Phi(V(x))
$$
for some concave function $V:\mathbb{R}^n\to\mathbb{\bar R}$. Substituting
the definitions of $\tilde f,\tilde g,\tilde h,\tilde\lambda,\tilde\mu$,
and using $1-\Phi(x)=\Phi(-x)$ and $1_{\langle a,x\rangle+b>0}\eqa
1_{\langle a,x\rangle+b\ge 0}$, concludes the proof.
\end{proof}

\subsection{Extension to general $m\ge 3$}

In this subsection we will prove Theorem \ref{thm:main} in full, with 
the exception of the additional cases where 
$\Phi^{-1}(h),\Phi^{-1}(f_i)$ are assumed to be a.e.\ concave which will 
be treated in the next subsection.

By virtue of Propositions \ref{prop:nondeg} and \ref{prop:deg1d} and of
Lemma \ref{lem:deglast}, the result of this section has been proved for $m=2$.
We therefore proceed by induction. In the remainder of this subsection,
we will assume that the conclusion has already been proved for $m-1$
functions, and show that the conclusion must then extend to $m$ functions.

In the following, we fix $\lambda_1\ge \lambda_2\ge\cdots\ge\lambda_m>0$
satisfying \eqref{eq:A}, as well as nontrivial measurable functions
$h,f_1,\ldots,f_m:\mathbb{R}^n\to[0,1]$ satisfying \eqref{eq:B}. We will
also assume throughout that equality holds in the Ehrhard-Borell inequality
$$
	\Phi^{-1}\bigg(\int h\,d\gamma_n\bigg)=
	\sum_{i\le m}\lambda_i\Phi^{-1}\bigg(
	\int f_i\,d\gamma_n
	\bigg),
$$
and proceed to deduce the resulting equality cases listed in Theorem
\ref{thm:main}.

Our ability to apply the induction hypothesis relies on the following
observation.

\begin{lem}
\label{lem:feas}
One can choose $\lambda=\lambda_1$ and $\mu>0$ such that
$$
	\lambda+\mu\ge 1,\qquad\quad |\lambda-\mu|\le 1,
$$
and
$$
	\sum_{2\le i\le m}\frac{\lambda_i}{\mu}\ge 1,\qquad\quad
	\frac{\lambda_2}{\mu}-
	\sum_{3\le i\le m}\frac{\lambda_i}{\mu}<1.
$$
That is, the families of coefficients $(\lambda,\mu)$ and
$(\lambda_2/\mu,\ldots,\lambda_m/\mu)$ both satisfy \eqref{eq:A}.
\end{lem}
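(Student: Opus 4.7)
The plan is to exhibit an explicit $\mu$ satisfying all four inequalities, namely
\[
	\mu := \min\!\big(1+\lambda_1,\; \textstyle\sum_{i\ge 2}\lambda_i\big),
\]
and then check each of the four conditions in turn using \eqref{eq:A}. Note that since $m\ge 3$ (we are in the induction step from $m-1$ to $m$ with $m\ge 3$), we have $\sum_{i\ge 3}\lambda_i>0$, which will be the single place where strict positivity of the tail plays a role.

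To carry this out, observe first that the two coefficient conditions on $(\lambda,\mu)=(\lambda_1,\mu)$ amount to $\lambda_1+\mu\ge 1$ and $\lambda_1-1\le\mu\le\lambda_1+1$, while the conditions on the rescaled family $(\lambda_2/\mu,\ldots,\lambda_m/\mu)$ amount to $\mu\le \sum_{i\ge 2}\lambda_i$ and $\mu>\lambda_2-\sum_{i\ge 3}\lambda_i$. The upper bounds $\mu\le 1+\lambda_1$ and $\mu\le \sum_{i\ge 2}\lambda_i$ hold by the very definition of $\mu$. For the lower bound $\mu\ge \lambda_1-1$, either $\mu=1+\lambda_1\ge\lambda_1-1$ or $\mu=\sum_{i\ge 2}\lambda_i\ge\lambda_1-1$; the latter inequality is precisely the second clause of \eqref{eq:A}. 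For $\lambda_1+\mu\ge 1$, either $\lambda_1+\mu=1+2\lambda_1\ge 1$ or $\lambda_1+\mu=\sum_{i\le m}\lambda_i\ge 1$ by the first clause of \eqref{eq:A}; and $\mu>0$ is immediate.

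The one strict inequality $\mu>\lambda_2-\sum_{i\ge 3}\lambda_i$ is where the two cases diverge. If $\mu=1+\lambda_1$, then $\mu\ge 1+\lambda_2>\lambda_2-\sum_{i\ge 3}\lambda_i$. If $\mu=\sum_{i\ge 2}\lambda_i$, then
\[
	\mu - \Big(\lambda_2-\textstyle\sum_{i\ge 3}\lambda_i\Big) = 2\sum_{i\ge 3}\lambda_i > 0,
\]
using that $m\ge 3$ and $\lambda_i>0$ for every $i$. This exhausts the case analysis and proves the lemma.

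The main, and only, subtlety to watch for is this last strict inequality: it is the place where the induction step genuinely uses that the degenerate boundary case of \eqref{eq:A} for the reduced family is avoided, and it relies on there being at least one coefficient with index $i\ge 3$. Everything else reduces to bookkeeping with the two clauses of \eqref{eq:A}.
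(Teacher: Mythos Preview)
Your proof is correct and follows essentially the same route as the paper: you choose the identical explicit value $\mu=\min(1+\lambda_1,\sum_{i\ge 2}\lambda_i)$ and verify the four conditions by the same case split. Your verification of the strict inequality in the case $\mu=1+\lambda_1$ is in fact slightly cleaner than the paper's, since the observation $\mu\ge 1+\lambda_2>\lambda_2-\sum_{i\ge 3}\lambda_i$ handles both $\lambda_1>\lambda_2$ and $\lambda_1=\lambda_2$ at once, whereas the paper treats these two subcases separately.
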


\begin{proof}
We claim that we can choose
$$
	\mu = \min\Bigg(1+\lambda_1,
	\sum_{2\le i\le m}\lambda_i\Bigg).
$$
Indeed, suppose first that $\mu = \sum_{i\ge 2}\lambda_i\le 1+\lambda_1$.
Then 
$$
	\sum_{2\le i\le m}\frac{\lambda_i}{\mu}= 1,\qquad\quad
	\frac{\lambda_2}{\mu}-
	\sum_{3\le i\le m}\frac{\lambda_i}{\mu}< 
	\frac{\lambda_2}{\mu}<1.
$$
But $\lambda+\mu\ge 1$ and $\lambda-\mu\le 1$ follow from the assumption that $(\lambda_1,\ldots,\lambda_m)$ satisfy \eqref{eq:A},
while $\mu-\lambda\le 1$ follows the assumption that $\mu\le 1+\lambda_1$.

Now suppose $\mu=1+\lambda_1\le \sum_{i\ge 2}\lambda_i$.
Then $\lambda+\mu\ge 1$ and $|\lambda-\mu|=1$ follow trivially. On the
other hand, we have $\sum_{i\ge 2}\lambda_i/\mu\ge 1$
by assumption. Finally,
$$
	\frac{\lambda_2}{\mu}-
	\sum_{3\le i\le m}\frac{\lambda_i}{\mu}
	=
	\frac{2\lambda_2-\lambda_1}{\mu}
	+ \frac{\lambda_1}{\mu}
	- \sum_{2\le i\le m}\frac{\lambda_i}{\mu}
	\le 
	\frac{1-\lambda_1+2\lambda_2}{\mu}
	<1
$$
as $(\lambda_1,\ldots,\lambda_m)$ satisfy \eqref{eq:A},
provided that $\lambda_1>\lambda_2$. But if $\lambda_1=\lambda_2$,
$$
	\frac{\lambda_2}{\mu}-
	\sum_{3\le i\le m}\frac{\lambda_i}{\mu}
	=
	\frac{\lambda_1}{\mu}-
	\sum_{3\le i\le m}\frac{\lambda_i}{\mu}
	< \frac{\lambda_1}{\mu}<1,
$$
and the proof is complete.
\end{proof}

We are now ready to proceed to the main argument.

\begin{proof}[Proof of Theorem \ref{thm:main} (without convexity assumptions)]
We adopt the notation and assumptions stated at the beginning of this
subsection. Choose $\lambda,\mu$ as in Lemma \ref{lem:feas} and
define $\tilde\lambda_i:=\lambda_i/\mu$ for $i\ge 2$.
Define the function
$$
	\Phi^{-1}(\tilde h(x)) :=
	\mathop{\mathrm{ess\,sup}}_{z_3,\ldots,z_m\in\mathbb{R}^n}
	\bigg\{
	\tilde\lambda_2\Phi^{-1}\bigg(f_2\bigg(
	\frac{x-\sum_{i\ge 3}\tilde\lambda_iz_i}{\tilde\lambda_2}
	\bigg)\bigg)
	+
	\sum_{i=3}^m \tilde\lambda_i
	\Phi^{-1}(f_i(z_i))
	\bigg\}.
$$
This definition is made so that, on the one hand,
$$
	\Phi^{-1}(h(\lambda x+\mu y)) \gea
	\lambda \Phi^{-1}(f_1(x)) +
	\mu \Phi^{-1}(\tilde h(y))
$$
by assumption \eqref{eq:B}, while on the other hand, by definition
$$
	\Phi^{-1}\Bigg(
	\tilde h\Bigg(
	\sum_{2\le i\le m}\tilde\lambda_i z_i\Bigg)\Bigg)
	\gea
	\sum_{2\le i\le m}\tilde\lambda_i
	\Phi^{-1}(f_i(z_i)).
$$
Using the Ehrhard-Borell inequality twice, we obtain
\begin{align*}
	\Phi^{-1}\bigg(\int h\,d\gamma_n\bigg)
	&\ge
	\lambda\Phi^{-1}\bigg(\int f_1\,d\gamma_n\bigg)
	+\mu\Phi^{-1}\bigg(\int \tilde h\,d\gamma_n\bigg)
	\\ &\ge
	\sum_{1\le i\le m}\lambda_i\Phi^{-1}\bigg(\int f_i\,d\gamma_n\bigg).
\end{align*}
Here the first inequality is the Ehrhard-Borell inequality for $2$ functions,
while the second is the Ehrhard-Borell inequality for $m-1$ functions.
But as we assumed equality in the Ehrhard-Borell inequality for
$h,f_1,\ldots,f_m$, it follows that both these inequalities must be
equality. Thus we can use the case of Theorem \ref{thm:main} that
we already proved, together with the induction hypothesis, to conclude
the following:
\begin{enumerate}[$\bullet$]
\itemsep\abovedisplayskip
\item Either
\begin{equation}
\label{eq:M1}
\tag{M1}
	h(x) \eqa \Phi(\langle a,x\rangle + \lambda b+\mu c),\quad
	f_1(x) \eqa \Phi(\langle a,x\rangle + b),\quad
	\tilde h(x) \eqa \Phi(\langle a,x\rangle + c),
\end{equation}
or
\begin{equation}
\label{eq:M2}
\tag{M2}
	h(x) \eqa 1_{\langle a,x\rangle + \lambda b+\mu c\ge 0},\quad
	f_1(x) \eqa 1_{\langle a,x\rangle + b\ge 0},\quad
	\tilde h(x) \eqa 1_{\langle a,x\rangle + c\ge 0};
\end{equation}
or, if $\lambda+\mu=1$,
\begin{equation}
\label{eq:M3}
\tag{M3}
	h(x) \eqa f_1(x) \eqa \tilde h(x) \eqa \Phi(V(x))
\end{equation}
for some concave function $V:\mathbb{R}^n\to\mathbb{\bar R}$;
or, if $\lambda=1+\mu$,
\begin{equation}
\label{eq:M4}
\tag{M4}
	1-h(-x) \eqa 1-f_1(-x) \eqa \tilde h(x) \eqa \Phi(V(x))
\end{equation}
for some concave function $V:\mathbb{R}^n\to\mathbb{\bar R}$;
or, if $\mu=1+\lambda$,
\begin{equation}
\label{eq:M5}
\tag{M5}
	1-h(-x) \eqa 1-\tilde h(-x) \eqa f_1(x) \eqa \Phi(V(x))	
\end{equation}
for some concave function $V:\mathbb{R}^n\to\mathbb{\bar R}$.
\item Either
\begin{equation}
\label{eq:M1'}
\tag{M1'}
	\tilde h(x) \eqa \Phi(\langle a,x\rangle + b),\qquad
	f_i(x) \eqa \Phi(\langle a,x\rangle + b_i)
\end{equation}
for all $i\ge 2$ or
\begin{equation}
\label{eq:M2'}
\tag{M2'}
	\tilde h(x) \eqa 1_{\langle a,x\rangle + b\ge 0},\qquad
	f_i(x) \eqa 1_{\langle a,x\rangle + b_i\ge 0}
\end{equation}
for all $i\ge 2$, where $b=\sum_{i\ge 2}\tilde\lambda_ib_i$; 
or, if $\sum_{i\ge 2}\tilde\lambda_i=1$,
\begin{equation}
\label{eq:M3'}
\tag{M3'}
	\tilde h(x)\eqa f_2(x)\eqa\ldots\eqa f_m(x)
\end{equation}
and $\Phi^{-1}(\tilde h),\Phi^{-1}(f_2),\ldots,\Phi^{-1}(f_m)$ are 
a.e.\ concave.
\end{enumerate}
\vskip.1cm
Completing the proof is now a matter of considering every possible
combination of these different cases. Let us consider each one in turn.
\begin{enumerate}[$\bullet$]
\itemsep\abovedisplayskip
\item \textbf{Case \eqref{eq:M1'}:} in this case \eqref{eq:M2} cannot
occur, as it contradicts the given form of $\tilde h$. 
However, each of 
the remaining cases \eqref{eq:M1}, \eqref{eq:M3}, \eqref{eq:M4}, \eqref{eq:M5}
will give rise to the equality case \eqref{eq:H1} of Theorem \ref{thm:main}, as
is readily verified by substituting the given form of $\tilde h$ and
using the identity $1-\Phi(x)=\Phi(-x)$.
\item \textbf{Case \eqref{eq:M2'}:} in this case \eqref{eq:M1} cannot
occur, as it contradicts the given form of $\tilde h$.
However, each of 
the remaining cases \eqref{eq:M2}, \eqref{eq:M3}, \eqref{eq:M4}, \eqref{eq:M5}
will give rise to the equality case \eqref{eq:H2} of Theorem \ref{thm:main}
by the same reasoning as above.
\item \textbf{Case \eqref{eq:M3'}:} 
If \eqref{eq:M1} or \eqref{eq:M2} holds, we readily obtain the equality cases
\eqref{eq:H1} or \eqref{eq:H2} in Theorem \ref{thm:main}, respectively.
If \eqref{eq:M3} holds, we obtain the equality case
$$
	h(x)\eqa f_1(x)\eqa\ldots\eqa f_m(x)
$$
and $\Phi^{-1}(h),\Phi^{-1}(f_1),\ldots,\Phi^{-1}(f_m)$ are a.e.\ concave.
However, note that this can only occur when $\lambda+\mu=1$ and
$\sum_{i\ge 2}\tilde\lambda_i=1$, which implies that
$\sum_{i\ge 1}\lambda_i=1$.
If \eqref{eq:M4} holds, we readily obtain the equality case
$$
	1-h(-x)\eqa 1-f_1(-x)\eqa f_2(x)\eqa\ldots\eqa f_m(x)
$$
and $\Phi^{-1}(f_2),\ldots,\Phi^{-1}(f_m)$ are a.e.\ concave.
However, note that this can only occur when $\lambda=1+\mu$ and
$\sum_{i\ge 2}\tilde\lambda_i=1$, which implies that
$\lambda_1-\sum_{i\ge 2}\lambda_i=1$.
Finally, if \eqref{eq:M5} holds, then 
$\Phi^{-1}(\tilde h(x))$ and
$-\Phi^{-1}(\tilde h(-x))$ must both be a.e.\ concave, which
implies that either \eqref{eq:H1} or \eqref{eq:H2} must hold by 
Lemma \ref{lem:conc} below.
\end{enumerate}
\vskip.1cm
As we have considered all possible cases, the proof is complete.
\end{proof}

It remains to establish the following fact that was used above.

\begin{lem}
\label{lem:conc}
If $V:\mathbb{R}^n\to\mathbb{\bar R}$ is a measurable function
so that $V(x)$ and $-V(x)$ are a.e.\ concave, then
$V(x)\eqa\langle a,x\rangle + b$ or $V(x)\eqa\Phi^{-1}(1_{\langle a,x\rangle+b\ge 0})$
for some $a,b$.
\end{lem}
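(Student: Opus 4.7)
The plan is to choose concave representatives $\tilde V,\tilde W$ so that $V\eqa\tilde V$ and $-V\eqa\tilde W$; then $\tilde V\eqa-\tilde W$. Partition $\mathbb{R}^n$ by the value of $\tilde V$ into $D_+:=\{\tilde V=+\infty\}$, $D_-:=\{\tilde V=-\infty\}$, and $D_0:=\{\tilde V\in\mathbb{R}\}$. With the convention $\infty-\infty=-\infty$, the effective domain $E_{\tilde V}:=D_+\cup D_0=\{\tilde V>-\infty\}$ is convex, and since $\tilde W\eqa-\tilde V$ the set $E_{\tilde W}$ agrees a.e.\ with $D_-\cup D_0$ and is also convex. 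The argument will split on whether $D_0$ has positive Lebesgue measure.

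The first step is to establish a local dichotomy on the open convex set $\mathrm{int}(E_{\tilde V})$: either $\tilde V\equiv+\infty$ there, or $\tilde V$ is everywhere finite. Concavity shows that $\{\tilde V=+\infty\}$ is open in $\mathrm{int}(E_{\tilde V})$: if $\tilde V(x)=+\infty$ and $w$ is close enough to $x$ that $2w-x\in E_{\tilde V}$, then
\[
	\tilde V(w)\ge \tfrac12 \tilde V(x)+\tfrac12 \tilde V(2w-x) = +\infty,
\]
where the right-hand side is genuinely $+\infty$ since $\tilde V(2w-x)>-\infty$. Conversely, the standard interior continuity of concave functions shows $\{\tilde V\in\mathbb{R}\}$ is open in $\mathrm{int}(E_{\tilde V})$: if $\tilde V(x)\in\mathbb{R}$ and $\tilde V(y)=+\infty$ for some $y$ close to $x$, the concavity inequality at $x=\tfrac12 y+\tfrac12(2x-y)$ would force $\tilde V(2x-y)=-\infty$, contradicting $2x-y\in E_{\tilde V}$ for $y$ close enough. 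Connectedness of $\mathrm{int}(E_{\tilde V})$ then yields the dichotomy.

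In Case A, where $D_0$ has positive measure, $E_{\tilde V}$ is a convex set of positive measure and so has nonempty interior. The dichotomy forces $\mathrm{int}(E_{\tilde V})\subseteq D_0$, since the alternative would place the positive-measure set $D_0$ inside the null boundary $\partial E_{\tilde V}$; hence $D_+$ has measure zero. The symmetric analysis of $\tilde W$ gives $D_-$ of measure zero as well, so $\tilde V$ is a.e.\ finite; since a convex set of full Lebesgue measure must be all of $\mathbb{R}^n$, one in fact has $E_{\tilde V}=\mathbb{R}^n$ and then, applying the dichotomy once more, $D_+=\emptyset$. Thus $\tilde V$ is finite and continuous everywhere, and the same holds for $\tilde W$; the a.e.\ identity $\tilde V=-\tilde W$ extends by continuity to every point, making $\tilde V$ both concave and convex and hence affine, giving $V\eqa\langle a,x\rangle+b$. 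In Case B, where $D_0$ is Lebesgue-null, the convex sets $E_{\tilde V}$ and $E_{\tilde W}$ cover $\mathbb{R}^n$ up to a null set and intersect in a null set. If one of them equals $\mathbb{R}^n$, then $V$ is a.e.\ the constant $\pm\infty$, corresponding to the degenerate $a=0$ in the half-space form. Otherwise both have nonempty, disjoint, open interiors, and the hyperplane separation theorem produces a closed half-space $H_+$ with $E_{\tilde V}\eqa H_+$, from which $\tilde V=+\infty$ a.e.\ on $H_+$ and $-\infty$ a.e.\ on its complement, so $V\eqa\Phi^{-1}(1_{\langle a,x\rangle+b\ge 0})$ for an appropriate choice of $a,b$.

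The main obstacle I expect is the careful handling of the extended-real convention of the paper: because concave functions are allowed to take the value $+\infty$ and the convention $\infty-\infty=-\infty$ softens the concavity inequality at certain points, the usual convex-analytic tools---convexity of the effective domain, interior continuity, and the absorption of $+\infty$ along segments---must be re-examined in this setting. Once these are in place, the dichotomy between Cases A and B cleanly separates the affine and half-space equality cases.
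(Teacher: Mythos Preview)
Your proof is correct and follows essentially the same strategy as the paper: split according to whether $V$ is finite on a set of positive measure, in the finite case show the concave representatives of $V$ and $-V$ coincide everywhere (hence $V$ is affine), and in the a.e.-infinite case separate the two convex effective domains by a hyperplane. The only difference is that the paper outsources the finite case to \cite[Theorem~3]{Dub77} to obtain a proper concave representative that is continuous on its domain, whereas you prove this yourself via the open/closed dichotomy of $\{\tilde V=+\infty\}$ versus $\{\tilde V\in\mathbb{R}\}$ on $\mathrm{int}(E_{\tilde V})$; your version is more self-contained but otherwise the arguments match.
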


\begin{proof}
Suppose first that $|V(x)|<\infty$ occurs on a set with positive measure.
Then by \cite[Theorem 3]{Dub77} we may assume that $V$ is a
proper concave function. In particular, $V$ is continuous on its domain.
Now note that by continuity, a.e.\ concavity of $-V$ implies that
$-V$ is also concave. Thus $V(x)=\langle a,x\rangle+b$ must be affine.

Now suppose $|V(x)|\eqa\infty$. Then $V(x)\eqa\Phi^{-1}(1_K(x))$ for a 
convex set $K$. The assumption that $-V(x)\eqa\Phi^{-1}(1_{K^c}(x))$ is 
a.e.\ concave now implies that $K^c$ must differ from some convex set 
$\tilde K$ by a null set. In particular, $K\cap\tilde K$ is a null set, 
so $K,\tilde K$ can only intersect on their boundaries. By the 
Hahn-Banach theorem, there exist $a,b$ such that $K\subseteq\{x:\langle 
a,x\rangle+b\ge 0\}$ and $\tilde K\subseteq\{x:\langle a,x\rangle+b\le 
0\}$. But as $(K\cup\tilde K)^c$ is a null set, we have
$1_K(x)\eqa 1_{\langle a,x\rangle+b\ge 0}$ and the proof is complete.
\end{proof}

\subsection{The convex case}

We finally address the last part of Theorem \ref{thm:main}, which is 
concerned with the equality cases in the Ehrhard-Borell inequality under 
additional convexity assumptions. In this case, the Ehrhard-Borell 
inequality is valid for any $\sum_i\lambda_i\ge 1$ (that is, the second 
assumption in \eqref{eq:A} is not needed). Note, however, that in the 
case $\sum_i\lambda_i=1$ the convex equality cases are already fully 
settled by the general part of Theorem \ref{thm:main}, so that it 
remains to consider the case $\sum_i\lambda_i>1$.

\begin{proof}[Proof of Theorem \ref{thm:main} under convexity 
assumptions] As indicated above, it suffices to assume that 
$\lambda:=\sum_i\lambda_i>1$. Let 
$h,f_1,\ldots,f_m:\mathbb{R}^n\to[0,1]$ be nontrivial measurable 
functions satisfying \eqref{eq:B}, and in addition that 
$\Phi^{-1}(h),\Phi^{-1}(f_1),\ldots,\Phi^{-1}(f_m)$ are a.e.\ concave.
We also assume equality holds in the Ehrhard-Borell inequality.

Define the function $\tilde h$ according to
$$
	\Phi^{-1}(\tilde h(x)) := \frac{\Phi^{-1}(h(\lambda x))}{\lambda}.
$$
Using that $\Phi^{-1}(h)$ is a.e.\ concave, we can write
$$
	\Phi^{-1}\bigg(h\bigg(\frac{\lambda}{2}x+\frac{\lambda}{2}y
	\bigg)\bigg)
	\gea
	\frac{\lambda}{2}\Phi^{-1}(\tilde h(x)) +
	\frac{\lambda}{2}\Phi^{-1}(\tilde h(y)).
$$
As $\lambda>1$, the Ehrhard-Borell inequality yields
$$
	\Phi^{-1}\bigg(\int h\,d\gamma_n\bigg) \ge
	\lambda\Phi^{-1}\bigg(\int \tilde h\,\gamma_n\bigg).
$$
On the other hand, the assumption \eqref{eq:B} implies that
$$
	\Phi^{-1}\Bigg(\tilde h\Bigg(
	\sum_{i\le m}\frac{\lambda_i}{\lambda}x_i\Bigg)\Bigg) 
	\gea
	\sum_{i\le m} \frac{\lambda_i}{\lambda}
	\Phi^{-1}(f_i(x_i)).
$$
As $\sum_i\lambda_i/\lambda=1$,
we can use again the Ehrhard-Borell inequality to obtain
$$
	\Phi^{-1}\bigg(\int h\,d\gamma_n\bigg) \ge
	\lambda\Phi^{-1}\bigg(\int \tilde h\,\gamma_n\bigg)
	\ge
	\sum_{i\le m}\lambda_i\Phi^{-1}\bigg(
	\int f_i\,d\gamma_n
	\bigg).
$$
This proves the Ehrhard-Borell inequality in the convex case. However,
as we assumed equality holds in the Ehrhard-Borell inequality for
$h,f_1,\ldots,f_m$, both intermediate applications of the
Ehrhard-Borell inequality must yield equality as well. Let us
consider each of these equality cases.
The first inequality applies only the nondegenerate case of the
Ehrhard-Borell inequality as $\lambda/2+\lambda/2>1$ and
$|\lambda/2-\lambda/2|<1$. Therefore, Proposition \ref{prop:nondeg}
shows that either
$$
	h(x) \eqa \Phi(\langle a,x\rangle+\lambda b),\qquad
	\tilde h(x) \eqa \Phi(\langle a,x\rangle+b),
$$
or
$$
	h(x) \eqa 1_{\langle a,x\rangle+\lambda b\ge 0},\qquad
	\tilde h(x) \eqa 1_{\langle a,x\rangle+b\ge 0},
$$
for some $a\in\mathbb{R}^n$, $b\in\mathbb{R}$. On the other hand,
the equality cases resulting from the second inequality as given
by the general case of Theorem \ref{thm:main} as follows: either
$$
	\tilde h(x) \eqa \Phi(\langle a,x\rangle + b),\qquad
	f_i(x) \eqa \Phi(\langle a,x\rangle + b_i)
$$
for all $i$, or
$$
	\tilde h(x) \eqa 1_{\langle a,x\rangle + b\ge 0},\qquad
	f_i(x) \eqa 1_{\langle a,x\rangle + b_i\ge 0}
$$
for all $i$, for some $a\in\mathbb{R}^n$, $b_1,\ldots,b_m\in\mathbb{R}$,
and $b=\sum_i\lambda_ib_i/\lambda$; or
$$
	\tilde h(x)\eqa f_1(x)\eqa\ldots\eqa f_m(x).
$$
All these cases are readily verified to result in the equality cases
\eqref{eq:H1} or \eqref{eq:H2}. We have therefore completed the proof
of Theorem \ref{thm:main}.
\end{proof}

\subsection*{Acknowledgments}

This work was supported in part by NSF grant CAREER-DMS-1148711 and by the 
ARO through PECASE award W911NF-14-1-0094. The paper was completed while 
the authors were in residence at the Mathematical Sciences Research 
Institute in Berkeley, California, supported by NSF grant DMS-1440140. The 
hospitality of MSRI and of the organizers of the program on Geometric 
Functional Analysis and Applications is gratefully acknowledged. Finally, 
we thank a referee for helpful comments that improved the presentation of 
this paper.


\end{document}